\theoremstyle{plain}
\newtheorem{theorem}[equation]{Theorem}
\newtheorem{corollary}[equation]{Corollary}
\newtheorem{lemma}[equation]{Lemma}
\newtheorem{question}[equation]{Question}
\newtheorem{proposition}[equation]{Proposition}
\theoremstyle{definition}
\theoremstyle{indenteddefinition}
\newtheorem{example}[equation]{Example}
\newtheorem{remark}[equation]{Remark}
\newtheoremstyle{named}{}{}{}{}{\bfseries}{}{.5em}{#3}
\theoremstyle{named}
\newtheorem*{namedtheorem}{Theorem}
\renewcommand{\sec}[1]{\section{#1}
\renewcommand{\theequation}{\thesection.\arabic{equation}}
  \setcounter{equation}{0}}
\newcommand{\subsec}[1]{\subsection{#1}
\renewcommand{\theequation}{\thesubsection.\arabic{equation}}
  \setcounter{equation}{0}}
\DeclareMathOperator\Norm{Norm}
\newcommand{\Vreg}{V_{\text{reg}}}
\newcommand{\Hom}{\text{Hom}}
\newcommand{\inv}{^{-1}}
\newcommand\Q{\mathbb Q}
\newcommand\R{\mathbb R}
\newcommand\C{\mathbb C}
\newcommand\Z{\mathbb Z}
\newcommand{\ch}[1]{#1^\vee}
\newcommand\tits{\mathcal T}
\newcommand\Cal{\mathcal C}
\renewcommand\P{\mathcal P}
\newcommand\E{\mathcal E}
\renewcommand\o{\tilde o}
\newcommand\oad{\tilde o_{\text{ad}}}
\newcommand\lcm{\text{LCM}}
\newcommand\rank{\text{rank}}
\newcommand\tchar{\text{char}}  
\newcommand\Semispin{\text{Semispin}}
\newcommand\Spin{\text{Spin}}
\newcommand\SO{\text{SO}}
\newcommand\PSO{\text{PSO}}
\newcommand\SL{\text{SL}}
\newcommand\GL{\text{GL}}
\newcommand\diag{\text{diag}}
\newcommand\Cox{\text{Cox}}
\newcommand{\deltaW}{\negthinspace\negthinspace\phantom{a}^\delta W}
\newcommand{\deltaG}{\negthinspace\negthinspace\phantom{a}^\delta G}
\newcommand{\deltaT}{\negthinspace\negthinspace\phantom{a}^\delta\tits}
\newcommand{\deltaN}{\negthinspace\negthinspace\phantom{a}^\delta N}
\newcommand{\gti}{\negthinspace\negthinspace\phantom{a}^tg^{-1}}
\newcommand{\twoE}{\negthinspace\negthinspace\phantom{a}^2 E_6}
\newcommand{\threeD}{\negthinspace\negthinspace\phantom{a}^3 D_4}
\newcommand{\Gad}{G_{\text{ad}}}
\def\d{\delta}
\newcommand\wt{\widetilde}
\renewcommand{\theequation}{\thesection.\arabic{equation}}
\begin{document}
\title{Lifting of elements of Weyl groups}
\author{Jeffrey  Adams\thanks{The first author was supported in part
    by NSF grant DMS-1317523.} \\Department of
  Mathematics \\ University of Maryland
\and Xuhua He\thanks{The second author was supported in part
    by NSF grant DMS-1463852.} \\Department of
  Mathematics \\ University of Maryland
}

\date{\today}
\maketitle

\section{Introduction}\label{s:intro}

Let $G$ be a connected reductive group over an algebraically closed
field $F$. Choose a Cartan subgroup $T\subset G$, let $N=\Norm_G(T)$ be
its normalizer, and let $W=N/T$ be the Weyl group.
We have the exact sequence 
\begin{equation}
\label{e:exactW}
1\rightarrow T\rightarrow N\overset p\rightarrow W\rightarrow 1.
\end{equation}
It is natural to ask what can be said about
the orders of lifts of an element $w\in W$ to $N$. What is the smallest
possible order of a lift of $w$?  In particular, can $w$ be lifted to an element of $N$ of the same order?

Write $o(*)$ for the order of an element of a group, and let $N_w=p\inv(w)\subset N$. 
Define
\begin{subequations}
\renewcommand{\theequation}{\theparentequation)(\alph{equation}}  
\begin{equation}
\label{def:dtilde}
\o(w,G)=\min_{g\in N_w} o(g).
\end{equation}
The most important case is for the adjoint group $\Gad$, so define
\begin{equation}
\oad(w)=\o(w,\Gad).
\end{equation}
It is clear that $\o(w,G)$ only depends on the conjugacy class $\Cal$ of $w$, 
so write $\o(\Cal,G)$ and $\oad(\Cal)$ accordingly.
\end{subequations}

An essential role is played by the Tits group. 
This is a group which fits in an exact sequence
$1\rightarrow T_2\rightarrow \tits\rightarrow W\rightarrow 1$ where $T_2$ is a certain subgroup of 
the elements of $T$ of order ($1$ or) $2$. 
This implies $\o(w,G)=o(w)$ or $2o(w)$, but it can be difficult to determine which case holds. 

We also consider the twisted situation.
Let $\delta$ be an automorphism of $G$ of finite order which
preserves a pinning, and set  $\deltaG=G\rtimes\langle\delta\rangle$.
Let $\deltaN=\Norm_{\deltaG}(T)$ and $\deltaW=\deltaN/T$. 
Then conjugacy  in $W\delta$ is the same as $\delta$-twisted conjugacy in $W$, 
and we can ask about the order of lifts of elements of $W\delta$ to $\deltaN$.
See Section \ref{s:tits} for details. 

We say $W$ lifts to $G$ if the exact sequence \eqref{e:exactW} splits, 
in which case  $\o(w,G)=\o(w)$ for
all $w$.  If this is not the case, it may  not be practical to
give a formula for $\o(w,G)$ for all conjugacy classes. 
Rather, this can be done for 
several natural families.
We say  $w\in W\delta$ is elliptic if it has no nontrivial fixed vectors 
in the reflection representation; in this case 
all lifts of $w$ 
are conjugate, so have the same order
$\o(w,G)$. An element $w\in W\delta$ is said to be regular if it has a regular eigenvector (see Section \ref{s:regular} and \cite{springer_regular}).

Let $\ch\rho$ be one-half the sum of the positive coroots in any
positive system.  We refer to the element $z_G=(2\ch\rho)(-1)$ as the
{\it principal involution} in $G$.  It is contained in the center
$Z(G)$, is independent of the choice of positive system, and is fixed
by every automorphism of $G$.

Here is a result concerning when $W$ lifts, so $\o(w)=o(w)$ for all $w$.

\begin{namedtheorem}[Theorem A]
\label{t:WsplitIntro}  
If the characteristic of $F$ is $2$, then the Tits group $\tits$ is isomorphic to the Weyl group, so the 
exact sequence \eqref{e:exactW} splits.

Suppose the characteristic of $F$ is not $2$, and that $G$ is simple.\footnote{By simple we mean in the sense of algebraic groups:
$G$ has no nontrivial,  closed, connected, normal subgroups. Some authors use the term \emph{quasi-simple} or \emph{almost simple}.
}
If $G$ is adjoint of type $A_n,B_n,D_n$ or $G_2$ then $W$ lifts. 
The same holds for $\SO(2n)$, and in type $A_n$ if $|Z(G)|$ is odd.
For  necessary and sufficient conditions for $W$ to lifts see Theorem \ref{t:Wsplit}. 
\end{namedtheorem}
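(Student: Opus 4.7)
My plan is to handle the characteristic-$2$ case uniformly and then, in each listed odd-characteristic case, exhibit a subgroup of $N$ that maps isomorphically onto $W$. In characteristic $2$, the torus $T \cong (F^\times)^{\rank}$ has no nontrivial $2$-torsion, so the subgroup $T_2 \subset T$ appearing as the kernel of the Tits extension is trivial; the induced map $\tits \to W$ is then an isomorphism, and composing its inverse with the inclusion $\tits \hookrightarrow N$ splits \eqref{e:exactW}.

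For odd characteristic I would argue case by case via explicit matrix realizations. For $\Gad$ of type $A_n$, namely $\GL_{n+1}/F^\times$, the permutation matrices in $\GL_{n+1}$ form a copy of $W = S_{n+1}$ whose image in $G$ is the desired lift. For $\SO(2n)$ and hence $\PSO(2n)$, realize $W(D_n)$ as $(2n) \times (2n)$ signed permutation matrices with an even number of sign changes; each sign change contributes a factor $-1$ while the permutation part contributes $\sgn(\sigma)^2 = 1$ to the determinant, so the product is $+1$ and the lift lies in $\SO(2n)$. For $\SO(2n+1)$, use the analogous realization on $2n+1$ coordinates, absorbing the extra $-1$ contributed by each sign change into a compensating negation of the middle coordinate (which is fixed pointwise by the Cartan); the lifts remain involutions and still compose correctly. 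For adjoint $G_2$, direct computation is feasible, since $W(G_2)$ is the dihedral group of order $12$ and the Tits extension involves only a small $T_2$.

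For type $A_n$ with $|Z(G)|$ odd, write $n+1 = 2^a m$ with $m$ odd, so that $G = \SL_{n+1}/C$ with $\mu_{2^a} \subseteq C$. Choose a primitive $2^{a+1}$-th root of unity $\zeta$, giving $\zeta^{n+1} = (-1)^m = -1$, and set $\tilde M_\sigma := \zeta^{\epsilon(\sigma)} M_\sigma$, where $\epsilon(\sigma) \in \{0,1\}$ is the parity of $\sigma$. These lie in $\SL_{n+1}$, and the multiplicative cocycle $\tilde M_\sigma \tilde M_\tau \tilde M_{\sigma\tau}^{-1}$ takes values in $\{1, \zeta^2\}$; since $\zeta^2$ has order $2^a$, it belongs to $\mu_{2^a} \subseteq C$, so the cocycle trivializes in $G$ and yields the required lift of $W$.

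The main obstacle I expect is this last case: the order of $\zeta$ must be matched precisely to the $2$-adic valuation of $n+1$ so that the cohomological correction lands inside $C$. The other cases reduce to routine matrix bookkeeping.
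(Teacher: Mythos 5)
Your odd-characteristic strategy is the same as the paper's: explicit permutation-matrix splittings case by case. The adjoint $A_n$ case (permutation matrices in $\GL_{n+1}$ pushed to the quotient), the $\SO(2n)$ case (form-preserving permutations of a hyperbolic basis $\{e_i,f_i\}$, with determinant $\sgn(\sigma)^2(-1)^{\#\text{swaps}}=+1$), and the passage to $\PSO$ are exactly the paper's constructions. Your $B_n$ variant is correct and in fact slightly more careful than the paper's: a bare swap $e_i\leftrightarrow f_i$ has determinant $-1$, so one must compensate with $-1$ on the anisotropic line $V_0$, and since that extra factor commutes with all the permutation matrices and squares to the identity, $P\mapsto P\cdot\diag(1,\dots,1,\det P)$ is still a homomorphism. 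Your $\zeta^{\epsilon(\sigma)}M_\sigma$ argument for $\SL_{n+1}/C$ with $|Z(G)|$ odd is correct ($\zeta^{n+1}=-1$ gives determinant $1$, and the coboundary $\zeta^{\epsilon(\sigma)+\epsilon(\tau)-\epsilon(\sigma\tau)}\in\{1,\zeta^2\}\subseteq\mu_{2^a}\subseteq C$); this is the same mechanism as the paper, which phrases it as finding $z$ with $\det(zp_i)=1$ and $(zp_i)^2\in A$ — your $z$ is $\zeta$. (The paper additionally proves the converse, which Theorem A does not require.) The characteristic-$2$ argument also agrees: the kernel $T_2$ is killed because $-1=1$.

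The one genuine gap is $G_2$. ``Direct computation is feasible'' is not a computation, and the reason you offer — that $T_2$ is small — points in the wrong direction: in $G_2$ both $m_{\alpha_1}=\ch\alpha_1(-1)$ and $m_{\alpha_2}=\ch\alpha_2(-1)$ are nontrivial (the coroots are primitive in $X_*$), so $T_2$ has order $4$ and the Tits section $\sigma$ is emphatically \emph{not} a splitting; one must exhibit different lifts and verify the dihedral relations. The paper's argument is short but has real content: the subgroup generated by $T$ and the root groups for $\pm\alpha_i$ is isomorphic to $\GL(2)$, so each $s_i$ admits an involutive lift $n_i$; since $w_0=(s_1s_2)^3$, Lemma \ref{l:involutions} gives $(n_1n_2)^6=(2\ch\rho)(-1)$, which is trivial because $\ch\rho$ lies in $X_*$ for $G_2$; the presentation $\langle a,b\mid a^2=b^2=(ab)^6=1\rangle$ of $W(G_2)$ then shows $n_1,n_2$ generate a lift. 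You should supply this (or an equivalent explicit) verification to close the case.
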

Over $\C$ this is proved in \cite{Weyl_splitting}, with the exception of some cases in types $A_n$ and $D_n$.

\begin{namedtheorem}[Theorem B]
\label{t:A}
Assume the characteristic of $F$ is not $2$. 
\begin{enumerate}
\item
Suppose $G$ is simple and $w\in W\delta$ is an elliptic element.
Then $\oad(w)=o(w)$,
except in certain cases in type $C_n$, or  
$G$ is of  type $F_4$ and $w$ is in the conjugacy class
$A_3+\tilde A_1$. See Section \ref{s:good} for details.
\item If $w\in W\delta$ is regular then $\oad(w)=o(w)$.
\end{enumerate}
\end{namedtheorem}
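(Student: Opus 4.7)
The plan is to reduce the problem to an explicit computation of orders in the Tits group $\tits$, and then to handle the regular case by a structural reduction to the elliptic one. Since $\tits$ fits in $1 \to T_2 \to \tits \to W \to 1$, any lift of $w \in W\delta$ to $\deltaN$ has order $o(w)$ or $2 o(w)$. The canonical Tits representative $n_w$, obtained from a reduced expression via the canonical lifts of the simple reflections, therefore satisfies $n_w^{o(w)} \in T_2$, and $\oad(w) = o(w)$ if and only if the image of this element in $\Gad$ is trivial.

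For part (1), the first step is to show that for elliptic $w \in W\delta$ all lifts to $\deltaN$ are $T$-conjugate, hence share a common order. The $T$-conjugate of a lift $n$ by $t \in T$ equals $t \cdot (\delta w)(t)\inv \cdot n$, and since $\delta w - 1$ is invertible on $X_*(T) \otimes \Q$, the map $t \mapsto t \cdot (\delta w)(t)\inv$ is surjective on $T$; thus $T$-conjugation acts transitively on $p\inv(w\delta)$. In particular $\oad(w) = o(n_w)$ computed in $\Gad$, and using the Tits relations we may write $n_w^{o(w)}$ as an explicit product of the generators $\ch\alpha(-1)$. Since these generate the $2$-torsion of $T$ in $\Gad$, checking triviality becomes linear algebra over $\Ztwo$ on the cocharacter lattice. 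The full statement of (1) then follows by running through the standard classification of twisted elliptic conjugacy classes in each simple type and performing this computation, isolating exactly the claimed exceptions in $C_n$ and the class $A_3+\tilde A_1$ in $F_4$.

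For part (2), I would use the theory of regular elements (see Section \ref{s:regular}) to reduce to an elliptic situation in which regularity excludes every exception from (1). If $w \in W\delta$ is regular of order $d$ with regular eigenvector $v \in \Vreg$ of eigenvalue a primitive $d$-th root of unity, then the $W$-stabilizer of the line through $v$ is itself a reflection subgroup of $W$, and $w$ appears there as an elliptic regular element. Equivalently, $v$ determines a regular semisimple element of $\Gad$ whose centralizer in $\deltaG$ carries a natural lift of $w$ of order $o(w)$. Applying part (1) to the corresponding sub-root-subsystem, together with a verification that regularity structurally rules out every exceptional class from (1), yields the desired lift.

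The main obstacle is the case-by-case computation in part (1), especially for type $C_n$ and the $F_4$ class $A_3+\tilde A_1$, where triviality of $n_w^{o(w)}$ in $\Gad$ depends on a delicate mod-$2$ computation on the cocharacter lattice. In part (2), the key structural input is the incompatibility of regularity with every exception from (1); once this check is made, the Springer-theoretic reduction delivers the conclusion with no further work.
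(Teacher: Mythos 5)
Your outline for part (1) matches the paper's strategy in broad terms: for elliptic $w$ all lifts are $T$-conjugate (since $1-w\delta$ is invertible on $X_*\otimes\Q$), so everything reduces to deciding whether $\sigma(w)^{o(w)}\in T_2$ is trivial in $\Gad$, and this is then settled class by class. What you are missing there is the tool that makes the case-by-case computation tractable: the paper does not manipulate reduced words directly, but invokes the theory of good elements (Section \ref{s:good}), writing $j(w)^{o(w)}=j(w_0(S_1))^{d_1}\cdots j(w_0(S_l))^{d_l}$ in the braid monoid and then using Lemma \ref{l:w_0(S)}, $\sigma(w_0(S))^2=(2\ch\rho(S))(-1)=z_{L(S)}$, to convert the question into whether $\sum_i (d_i/2)\ch\rho(S_i)$ lies in $X_*(T)$. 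Without some such identity, ``write $n_w^{o(w)}$ as an explicit product of the $\ch\alpha(-1)$ using the Tits relations'' is not a proof but a large computation you have not carried out; still, this is a defect of execution rather than of strategy.

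The genuine gap is in part (2). First, the stabilizer in $W$ of the line through a regular eigenvector $v$ is cyclic, not a reflection subgroup: if $w'v=\lambda v$ and $w''v=\lambda v$ then $w'(w'')\inv$ fixes the regular vector $v$ and hence is trivial, so the claimed reduction of a regular element to an elliptic element of a reflection subgroup does not exist. Second, even where $w$ does sit as an elliptic element of a sub-root system, applying part (1) to the subgroup of $\Gad$ generated by the corresponding root subgroups gives the wrong answer. For example, $s_\alpha$ in $PGL(3)$ is $2$-regular but not elliptic; the subgroup generated by $U_{\pm\alpha}$ is $\SL(2)$, in which every lift of $s_\alpha$ has order $4$, whereas the order-$2$ lift promised by Theorem B(2) uses the full torus of $PGL(3)$ and is invisible inside that subgroup. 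The paper's actual argument is entirely different and does not pass through part (1): by Brou\'e--Michel, a $d$-regular class with $d>1$ contains a representative with $j(w)^d=j(w_0)^2\delta^d$ in the braid monoid, whence $\sigma(w)^{o(w)}=z_G^{o(w)/d}\in Z(G)$, which is trivial in $\Gad$; the case $d=1$ is handled separately by noting that $w$ is then conjugate to $\delta$ itself. Your proposal contains no substitute for this identity, and the assertion that ``regularity structurally rules out every exception from (1)'' is neither proved nor sufficient, since it says nothing about non-elliptic regular elements, for which one must also exhibit a particular good lift because the lifts are no longer all conjugate.
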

The case when $w$ is regular and elliptic is discussed in 
\cite{rgly}.
The next  result gives more detail on $\o(w)$ for elliptic conjugacy classes. 
 
\begin{namedtheorem}[Theorem C]
\label{t:B}
Assume the characteristic of $F$ is not $2$. 
Suppose $G$ is simple,  $w$ is an elliptic element of $W\delta$, and $g$ is a lift of $w$. 

\begin{enumerate}
\item Suppose $G$ is of type $A_{n}$. Then $g^{o(w)}=z_G$.
\item Suppose $G$ is of type $C_n$. If $G$ is simply connected
then $g^{o(w)}\ne 1$. 
The elliptic conjugacy classes are parametrized by partitions of $n$ (cf. Section \ref{s:ellclassical}).
Suppose $G$ is adjoint and $w$ corresponds to a partition $(a_1,\dots, a_l)$.
Then $g^{o(w)}=1$ if and only if each $a_i$ has the same power of $2$ in its prime decomposition.
\item Suppose $G$ is of type $B_n$ or $D_n$. If $G$ is adjoint or $G\simeq \SO(2n)$ then $g^{o(w)}=1$. 
Otherwise see Section \ref{s:more}.
\item Suppose  $G$ is of exceptional type. If $G$ is of type ${}^3 D, G_2, E_6, {}^2 E_6,  E^{ad}_7$ or $E_8$ 
then $g^{o(w)}=1$. The same holds if $G$ is of type $F_4$ and $w$ is not in the class $A_3+\tilde A_1$. 
\end{enumerate}
\end{namedtheorem}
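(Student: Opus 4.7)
The plan is a type-by-type analysis unified by a reduction to the Tits group. For any lift $g$ of $w$, $g^{o(w)}$ lies in $T$ (since $w^{o(w)}=1$) and commutes with $g$, hence lies in the finite group $T^w$. Taking $g=\sigma_w$ to be the Tits lift, the power lies in $T_2$, and since all lifts of an elliptic $w$ are $N$-conjugate, it suffices to identify $\sigma_w^{o(w)}\in T_2$ up to $N$-conjugacy. In the twisted cases we use the extension $\deltaT$ introduced in Section \ref{s:tits} in place of $\tits$.

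For part (1), type $A_n$ has the Coxeter element $c$ of order $n+1$ as unique elliptic class, and a direct matrix computation with the signed permutation realization of $\sigma_c$ in $\SL_{n+1}$ gives $\sigma_c^{n+1}=z_G$. For part (2), with partition $(a_1,\ldots,a_l)$ in type $C_n$, a standard representative has a block-diagonal Tits lift in $\Gsc$, where the $i$-th block $\sigma_i$ of size $2a_i$ satisfies $\sigma_i^{2a_i}=-I$. Since $o(w)=2\,\lcm(a_1,\ldots,a_l)$, the $i$-th block of $\sigma_w^{o(w)}$ is $(-I)^{\lcm(a_j)/a_i}$, which equals $-I$ precisely when the $2$-adic valuation of $a_i$ is maximal among the $a_j$; hence $\sigma_w^{o(w)}\ne 1$ in $\Gsc$, and its image in $\Gad$ is trivial iff all block signs agree, iff all $a_i$ share a common $2$-adic valuation. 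Part (3) for types $B_n$ and $D_n$ proceeds by an analogous block computation in orthogonal models of $G$; the adjoint and $\SO(2n)$ claims follow once one checks that the obstructing $2$-torsion element lies in the relevant isogeny kernel.

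For part (4) the strategy is case-by-case using Carter's classification of elliptic classes of $W$ and $W\delta$. For each class one chooses a convenient reduced expression for a representative $w$, writes $\sigma_w$ as a product of the Tits generators $\sigma_\alpha$, and computes $\sigma_w^{o(w)}$ in $T_2$ using the defining relations $\sigma_\alpha^2=\ch\alpha(-1)$ together with the braid relations and the induced action on $T_2$; when convenient, $G$ is embedded in a classical group and the computation is reduced to a matrix identity. The main obstacle is the sheer number of classes to handle individually, especially in $E_8$. In type $F_4$ one must carefully isolate the exceptional class $A_3+\tilde A_1$, which is excluded from the present statement, and verify that for every other class the Tits-lift computation reduces to $1$ in $\Gad$. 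The twisted $\twoE$ and $\threeD$ cases have few elliptic classes and can be handled by direct calculation in $\deltaT$, or equivalently by embedding in a larger ambient group carrying $\delta$.
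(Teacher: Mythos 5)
The overall shape of your argument---reduce to the Tits lift $\sigma(w)$, note $\sigma(w)^{o(w)}\in T_2$, use the fact that all lifts of an elliptic element are conjugate, and then decide whether the resulting $2$-torsion element dies in the given isogeny---is exactly the paper's, and your parts (2) and (3) are essentially the paper's computation recast in matrix form (the paper phrases the same block calculation as the condition $\ch\tau\in X_*(T)$ for an explicit coweight $\ch\tau$, and gets the adjoint/$\SO(2n)$ cases of (3) from the permutation-matrix splitting of $W$ into $\SO(m)$). However, part (1) has a genuine gap. Theorem C(1) is a statement about elliptic elements of $W\delta$, and when $\delta$ is the nontrivial diagram automorphism in type $A_{n-1}$ the elliptic classes of $W\delta$ are parametrized by the partitions of $n$ with all parts odd (Section \ref{s:ellclassical}), not just by the Coxeter class; your assertion that the Coxeter class is the unique elliptic class holds only for untwisted $W$. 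The paper treats these classes separately in Section \ref{s:more}: writing $w=(\Cox_{a_1}\times\cdots\times\Cox_{a_l})\epsilon$ with $\epsilon^2=z_G$ and $\epsilon g\epsilon^{-1}={}^tg^{-1}$, one computes $(g\epsilon)^{2d}=(-I)^{n+1}=z_G$ for $d=\lcm(a_1,\dots,a_l)$. Some such argument must be added; the single matrix computation for $\sigma_c^{n+1}$ does not cover it.

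For part (4) your proposal is a plan rather than a proof. Computing $\sigma(w)^{o(w)}$ directly from a reduced expression via the relations $\sigma_\alpha^2=\ch\alpha(-1)$ is, for the long classes in $E_7$ and $E_8$, an enormous calculation (the paper did exactly this by computer only as an independent check). The paper's actual argument invokes the good-element identities $j(w)^{o(w)}=j(w_0(S_1))^{d_1}\cdots j(w_0(S_l))^{d_l}$ from \cite{GM} and \cite{GKP}, combined with Lemma \ref{l:w_0(S)} ($\sigma(w_0(S))^2=z_{L(S)}$), which reduces each class to deciding whether $\sum_i (d_i/2)\,\ch\rho(S_i)$ lies in $X_*(T)$---a few lines per class, and trivially settled whenever $\ch\rho\in\ch R$ or $4\mid d_i$. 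Without citing these tables (or actually carrying out the computer verification), the exceptional cases are not established; in particular you would still need to exhibit, not merely ``isolate,'' the obstruction for $A_3+\tilde A_1$ in $F_4$, which in the paper comes from $j(w)^4=j(w_I)^2 j(w_{23})^2$ and the fact that $z_{L(\{2,3\})}=z_{Sp(4)}\ne 1$.
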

For a more precise but more technical result see Proposition \ref{p:remaining}.

For brevity we've stated these results over an algebraically closed
field. For various weaker conditions see Proposition \ref{p:other}.

There are several key tools.  The Tits group comes with a canonical
set-theoretic splitting $\sigma:W\mapsto\tits$, and we make frequent
use of an identity in the Tits group: if $o(w)=2$ then
$\sigma(w)^2=(w\ch\rho-\ch\rho)(-1)$ (Lemma \ref{l:involutions}).   In particular if $w_0$ is the
longest element of $W$ then $\sigma(w_0)^2=z_G\in Z(G)$ and
$\oad(w_0)=o(w_0)=2$.  See Section \ref{s:involutions}.  Theorems B
and C reduce to this, by an easy calculation in some cases, or using
the theory of good elements of conjugacy classes to reduce to
principal involutions in Levi factors. See Section \ref{s:good}.

We originally computed $o(\sigma(w))$ for elliptic elements in the exceptional groups 
{\it Atlas of Lie Groups and Representations} software \cite{atlaswebsite}.
This independently confirms Theorem C (4); the two proofs rely on 
independent computer calculations.

Sean Rostami has some recent results which overlap these \cite{rostami}. 

The authors wish to thank Mark Reeder for originally asking about the
orders of lifts of Weyl group elements, and for extensive discussions
during the preparation of this article.

\sec{The Tits group}
\label{s:tits}

It is convenient to allow $F$ to be an arbitrary field, 
and suppose $G$ is a
connected, reductive algebraic group defined over $F$. Furthermore we
assume $G$ is split over $F$, and fix an $F$-split Cartan subgroup of
$G$. Then $N=\Norm_G(T)$ and $W=N/T$ are defined over $F$.  If $F$ is
algebraically closed then all Cartan subgroups of $G$ are conjugate
and $F$-split. We identify $G,T,N$ and $W$ with their $F$-points
$G(F), T(F), N(F)$, and $W(F)=N(F)/T(F)$. 

Let $X^*(T)$, or simply $X^*$, be the character lattice of $T$, $X_*=X_*(T)$ the co-character lattice, with natural perfect pairing 
$\langle \,,\,\rangle:X^*\times X_*\rightarrow\Z$. 
Write $\Delta\subset X^*$ for the roots of $T$ in $G$.
If $B$ is a Borel subgroup containing $T$ it defines a set
of positive roots $\Delta^+$ of $T$ in $G$, with associated simple roots $\Pi$. 
The Weyl group is generated by $\{s_\alpha\mid \alpha\in \Pi\}$, with the braid relations
and $s_\alpha^2=1$. If we've numbered the simple roots we write $s_i=s_{\alpha_i}$. 

For $\alpha\in \Delta$ let $\ch\alpha\in X_*$ be the corresponding
co-root, and set 
$m_\alpha=\ch\alpha(-1)$. The elementary abelian two-group generated by
$\{m_\alpha\mid \alpha\in \Pi\}$ is denoted $T_2$.  If $G$ is
simple and simply connected this is the set of elements of order $2$
in $T$.  In general it is the image of $T^{sc}_2$ where $T^{sc}$ is a Cartan
subgroup of the simply connected cover of the derived group of $G$.

Now fix a set $\{X_\alpha\mid \alpha\in \Pi\}$ of simple root vectors,
so $\P=(T,B,\{X_\alpha\})$ is a pinning.  Associated to $\P$ is the
Tits group $\tits=\tits_\P$.  This is a subgroup of $N$, generated by elements $\{\sigma_\alpha\mid \alpha\in \Pi\}$, 
where $\sigma_\alpha$ is a certain lift of $s_\alpha$ to $N$. 
See \cite{tits_group}.

\begin{lemma}
\label{l:tits}
\hfil
\begin{enumerate}
\item 
The Tits group $\tits$ is given by generators $\{\sigma_\alpha\mid\alpha\in\Pi\}$ and $T_2$, 
and relations
\begin{enumerate}
\item $\sigma_\alpha^2=m_\alpha$,
\item the braid relations,
\item $\sigma_\alpha t \sigma_\alpha\inv=s_\alpha(t)$\quad $(\alpha\in \Pi,h\in T_2)$.
\end{enumerate}
\item The map $\sigma_\alpha\rightarrow s_\alpha$ induces an exact sequence
$$
1\rightarrow T_2\rightarrow \tits\rightarrow W\rightarrow 1.
$$
\item If $w\in W$, choose a reduced expression 
$w=s_{\alpha_1}\dots s_{\alpha_n}$ and define $\sigma(w)=\sigma_{\alpha_1}\dots\sigma_{\alpha_n}$.
This is independent of the reduced expression, and $w\mapsto\sigma(w)$ is  a set-theoretic splitting of the exact sequence in (2).
\end{enumerate}

\end{lemma}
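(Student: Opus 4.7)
The plan is to build $\sigma_\alpha$ explicitly from the pinning and then verify the three relations by reducing everything to computations in root subgroups of rank at most $2$. For each simple root $\alpha$, the pinning singles out $X_\alpha$ in the $\alpha$ weight space, and together with the unique $X_{-\alpha}$ satisfying $[X_\alpha,X_{-\alpha}]=\ch\alpha$ this yields an $\mathfrak{sl}_2$-triple and a map $\varphi_\alpha\colon \SL_2\rightarrow G$ whose image generates with $T$ a rank-$1$ subgroup. Define $\sigma_\alpha=\varphi_\alpha\bigl(\begin{smallmatrix}0&1\\-1&0\end{smallmatrix}\bigr)$; this is manifestly a lift of $s_\alpha$ to $N$. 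Let $\tits$ be the subgroup of $N$ generated by $\{\sigma_\alpha\mid\alpha\in\Pi\}$ and $T_2$.

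First I would handle relations (a) and (c). Relation (a) is the single identity $\bigl(\begin{smallmatrix}0&1\\-1&0\end{smallmatrix}\bigr)^2=-I$ pushed through $\varphi_\alpha$: the image of $-I$ is $\ch\alpha(-1)=m_\alpha$. Relation (c) follows from the general fact that any lift of $s_\alpha$ acts as $s_\alpha$ on $T$ by conjugation; the only thing to check is that $\sigma_\alpha$ normalizes $T_2$, which is automatic because $s_\alpha$ preserves $T_2$ (as the subgroup of $T$ generated by the $m_\beta$, $\beta\in\Pi$, is $W$-stable).

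The real work, and the main obstacle, is the braid relations in (b). These involve only two simple roots $\alpha,\beta$ at a time, so I would pass to the rank-$2$ subgroup generated by the images of $\varphi_\alpha$, $\varphi_\beta$ and the corresponding subtorus and verify
\[
\underbrace{\sigma_\alpha\sigma_\beta\sigma_\alpha\cdots}_{m_{\alpha\beta}}=\underbrace{\sigma_\beta\sigma_\alpha\sigma_\beta\cdots}_{m_{\alpha\beta}}
\]
in each of the four rank-$2$ root systems $A_1\times A_1$, $A_2$, $B_2$, $G_2$. The $A_1\times A_1$ and $A_2$ cases are short matrix computations in $\SL_2\times\SL_2$ and $\SL_3$ using the commutation formulas for root subgroups; the $B_2$ and $G_2$ cases are the delicate ones, where I would use Chevalley's commutator formulas and the explicit expression $\sigma_\alpha=\exp(X_\alpha)\exp(-X_{-\alpha})\exp(X_\alpha)$ to reduce both sides to a common normal form. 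These identities are the content of Tits' original argument in \cite{tits_group}, and there is no way to avoid the case-by-case verification.

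Once (a), (b), (c) hold inside $N$, parts (2) and (3) follow formally. For (3), any two reduced expressions for $w\in W$ are connected by a sequence of braid moves (Matsumoto's theorem), and each such move preserves the product $\sigma_{\alpha_1}\cdots\sigma_{\alpha_n}$ by (b); so $\sigma(w)$ is well defined, and $\sigma(w)\sigma(w')=\sigma(ww')$ whenever $\ell(ww')=\ell(w)+\ell(w')$, giving a set-theoretic section of $\tits\rightarrow W$. For (2), the map $\sigma_\alpha\mapsto s_\alpha$ extends to a surjection $\tits\rightarrow W$ with $T_2$ contained in the kernel; to see equality, write any element of $\tits$ as $t\cdot\sigma(w)$ with $t\in T_2$ using (c) to push the $T_2$-factors to the left, and observe that $t\sigma(w)\in T$ forces $w=1$ since $\sigma(w)$ lifts $w$. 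Finally, (a), (b), (c) provide enough relations to present $\tits$: the Coxeter presentation of $W$ together with these relations determines a group mapping onto $\tits$, and a counting (or normal form) argument using the section $\sigma$ shows the map is an isomorphism.
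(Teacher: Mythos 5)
The paper does not actually prove this lemma; it states it and cites Tits \cite{tits_group}, and your outline is precisely the standard argument from that source: define $\sigma_\alpha$ via the $\SL(2)$ attached to the pinning, check (a) and (c) directly, reduce the braid relations to rank-$2$ verifications, and deduce (2) and (3) formally from Matsumoto's theorem and the normal form $t\sigma(w)$, $t\in T_2$. Your sketch is correct, with the understanding that the rank-$2$ braid identities (the genuinely hard step) are deferred to Tits' case-by-case computation, exactly as the paper itself does.
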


We also consider the twisted situation.  We say an automorphism of $G$
is distinguished if it fixes a pinning.
Suppose $\delta$ is a distinguished automorphism of $G$, of finite order, and 
it fixes a pinning $\P=(T,B,\{X_\alpha\})$. 
Define $\deltaG=G\rtimes\langle\delta\rangle$ (we identify the automorphism $\delta$ of $G$ 
with the element $(1,\delta)$ of $\deltaG$). 
Then $\delta$ induces automorphisms, also denoted $\delta$,
of the set of simple roots $\Pi$, the Dynkin diagram, $N$ 
and the Weyl group $W$. If $G$ is semisimple then $\delta$ is
determined by an automorphism of the Dynkin diagram. 
Define $\deltaN=\Norm_{\deltaG}(T)$ and $\deltaW=\deltaN/T$. Then $\deltaN\simeq N\rtimes\langle\delta\rangle$ and $\deltaW\simeq W\rtimes\langle\delta\rangle$. 

It is easy to see that $\delta$ induces an automorphism of $\tits$,
satisfying $\delta(\sigma_\alpha)=\sigma_{\delta(\alpha)}$
($\alpha\in\Pi$), and 
$\delta(\sigma(w))=\sigma(\delta(w))$ ($w\in W$). 
Define the extended Tits group $\deltaT=\tits\rtimes\langle\delta\rangle$, so again
there is an exact sequence $1\rightarrow T_2\rightarrow
\deltaT\rightarrow\deltaW\rightarrow 1$.
The splitting $\sigma:W\rightarrow\tits$ extends to $\deltaW$ by setting $\sigma(\delta)=\delta$.

Assume $\delta^2=1$. An element $w\delta\in\deltaW$ is an involution if and only if $w\delta(w)=1$, in which case,
as in \cite{GKP}, we say 
$w$ is a $\delta$-twisted involution.
More generally if $\delta^r=1$ then $(w\delta)^r=1$ if and only if $w\delta(w)\delta^2(w)\dots\delta^{r-1}(w)=1$.

Some of the main results apply without assuming $F$ is algebraically closed. 

\begin{proposition}
\label{p:other}
Let  $F$ be an arbitrary field.
\begin{enumerate}
\item  Suppose $G$ is an $F$-split,
  connected, reductive algebraic group, and $T$ is an $F$-split Cartan subgroup.  Then Theorems  B and C hold.
\item Suppose $F=\R$ and $G(\R)$ is compact. Equivalently, suppose $G$ is a compact connected Lie group. 
Then Theorems A,B and C hold.
\end{enumerate}  
\end{proposition}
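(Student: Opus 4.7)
The plan is to reduce both parts to the results already established over an algebraically closed field. The key observation is that the Tits group $\tits_\P$ and its canonical splitting $\sigma:W\to\tits$ are built only from the pinning $\P$ and the element $-1$, and so live inside $N(F)$ whenever $\P$ is $F$-rational. Writing $\bar F$ for an algebraic closure of $F$, the inequality $\o(w,G(F))\geq\o(w,G(\bar F))$ is automatic because any $F$-rational lift of $w$ is in particular $\bar F$-rational; the task is to produce minimum-order lifts already defined over $F$.

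For (1), the pinning $\P$ is $F$-rational by hypothesis, so $\tits_\P,\deltaT\subset N(F)$ and $\sigma(w)\in N(F)$ for every $w\in\deltaW$. Inspection of the proofs of Theorems B and C shows that the minimum-order lift realizing $\o(w,G)$ always comes from the Tits group: for elliptic $w$ all lifts in $N(\bar F)$ are conjugate and hence share the common order $o(\sigma(w))$, which is then also the order of every $F$-rational lift; in the regular case of Theorem B and in the refined Theorem C the power $\sigma(w)^{o(w)}$ is computed directly via the good-element reduction of Section \ref{s:good} and the involution identities of Section \ref{s:involutions}. Since these computations take place inside $\tits\subset N(F)$, the value $g^{o(w)}$ of Theorem C is an identity in $\tits$ that is independent of $F$, and one concludes $\o(w,G(F))=o(\sigma(w))=\o(w,G(\bar F))$.

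For (2), let $K=G(\R)$ be a compact connected Lie group with complexification $K_\C$, fix a maximal torus $T_K\subset K$, and let $T=T_{K,\C}\subset K_\C$. Then $N_K(T_K)=K\cap N_{K_\C}(T)$ and $W(K,T_K)\simeq W(K_\C,T)$. I would choose a pinning $\P=(T,B,\{X_\alpha\})$ of $K_\C$ in which the $X_\alpha$ come from a Chevalley basis adapted to the compact real form, so that $X_\alpha-X_{-\alpha}\in\mathfrak k$; then $m_\alpha=\ch\alpha(-1)\in T_K$ and $\sigma_\alpha=\exp\bigl(\tfrac{\pi}{2}(X_\alpha-X_{-\alpha})\bigr)\in K$, placing all of $\tits$ and $\deltaT$ inside $N_K(T_K)$. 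The argument of (1) then yields Theorems B and C for $K$, and any splitting of $W$ into $\tits$ produced in the proof of Theorem A automatically takes values in $K$, giving Theorem A.

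The main obstacle is the pinning choice in (2): one must verify that the elements $\sigma_\alpha$ arising from a compactly normalized Chevalley basis satisfy the defining Tits relations of Lemma \ref{l:tits}(1), so that they really generate a copy of $\tits$ inside $N_K(T_K)$ rather than some quotient or central extension. This is a classical but delicate normalization question; once it is settled, no new conjugacy or order computations are required, and every statement transports directly from the algebraically closed setting.
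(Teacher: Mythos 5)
Your proposal is correct and follows essentially the same route as the paper: the proofs of Theorems B and C are computations with $\sigma(w)$ in the Tits group, which lies in $N(F)$ once the pinning is $F$-rational, and for a compact group one normalizes the pinning (the paper takes $[X_\alpha,\sigma(X_\alpha)]=-\ch\alpha$, which is your condition $X_\alpha-X_{-\alpha}\in\mathfrak k$) so that $\tits_\P\subset\Norm_{G(\R)}(T(\R))$. The one imprecision is your justification of Theorem A in the compact case: the splittings constructed in the proof of Theorem \ref{t:Wsplit} are generally \emph{not} contained in $\tits$, so one should instead transfer a complex splitting to the compact form via the standard fact that $\Norm_K(T_K)/T_K\simeq\Norm_{K_\C}(T)/T$ together with the unique divisibility of $T(\C)/T_K$ (equivalently, by conjugating the compact subgroup $\phi(W)T_K$ into $K$), which is what the paper's Lemma \ref{l:compact} alludes to.
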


The proofs of Theorems B and C hold
only assuming $T$ is $F$-split. 
(If $\tchar(F)=2$ then Theorem A holds. Otherwise it 
requires that $F$ contain certain roots of unity.) 
Statement (2) follows from:

\begin{lemma}
\label{l:compact}
Suppose $G$ is a connected compact Lie group, and $T$ is a
Cartan subgroup.  Let $(G(\C),T(\C))$ be the complexification of $G$
and $T$, and choose a Borel subgroup $B(\C)$ containing $T(\C)$. 
Then we can choose a pinning $\P=(T(\C),B(\C),\{X_\alpha\})$ such that 
the Tits group $\tits_\P$ is contained in $\Norm_{G(\R)}(H(\R))$. 
\end{lemma}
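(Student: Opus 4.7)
The plan is to exploit the standard construction of a Chevalley basis adapted to the compact real form, and check that both types of generators of $\tits$ (the simple root lifts $\sigma_\alpha$ and the group $T_2$) land inside the compact normalizer. (I will read ``$H$'' in the statement as a typo for $T$.)

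First I would set up the compact structure. Write $\tau$ for the anti-holomorphic involution of $G(\C)$ whose fixed points are $G=G(\R)$. On the complexified Lie algebra, $\tau$ restricts to complex conjugation with respect to the compact real form. By a standard result (Helgason, Knapp) one can choose a Chevalley basis $\{H_\alpha\}\cup\{X_\alpha\}_{\alpha\in\Delta}$ with real structure constants such that $\tau(X_\alpha)=-X_{-\alpha}$ for all $\alpha\in\Delta$; equivalently, $X_\alpha-X_{-\alpha}$ and $i(X_\alpha+X_{-\alpha})$ both lie in the Lie algebra of $G(\R)$. Take $\P=(T(\C),B(\C),\{X_\alpha\}_{\alpha\in\Pi})$ to be the resulting pinning.

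Next I would handle the two classes of generators of $\tits_\P$ from Lemma \ref{l:tits}. For the toral part, $T_2$ is generated by the elements $m_\alpha=\ch\alpha(-1)$. Since $\ch\alpha:\C^\times\rightarrow T(\C)$ restricts to a homomorphism $S^1\rightarrow T(\R)$ and $-1\in S^1$, each $m_\alpha$ lies in $T(\R)\subset\Norm_{G(\R)}(T(\R))$. For the lifts $\sigma_\alpha$, recall Tits's formula $\sigma_\alpha=u_\alpha(1)u_{-\alpha}(-1)u_\alpha(1)$, where $u_\alpha(t)=\exp(tX_\alpha)$. Restricting to the $SL_2$ subgroup $\varphi_\alpha:SL_2(\C)\rightarrow G(\C)$ determined by $X_{\pm\alpha}$, this is the image of $\bigl(\begin{smallmatrix} 0 & 1 \\ -1 & 0\end{smallmatrix}\bigr)\in SU(2)$, a direct $2\times 2$ calculation. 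Because the Chevalley basis was adapted to $\tau$, the map $\varphi_\alpha$ sends $SU(2)$ into $G(\R)$, so $\sigma_\alpha\in G(\R)$; and $\sigma_\alpha$ normalizes $T$ because it projects to $s_\alpha\in W$, so $\sigma_\alpha\in\Norm_{G(\R)}(T(\R))$.

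The conclusion is then immediate: since $\tits_\P$ is generated by $T_2\cup\{\sigma_\alpha:\alpha\in\Pi\}$ and every generator lies in $\Norm_{G(\R)}(T(\R))$, the whole Tits group does. The only point requiring any care is the existence of a Chevalley basis with $\tau(X_\alpha)=-X_{-\alpha}$; this is the main (and standard) input, and with it in hand the rest is a one-line reduction to the $SU(2)$ computation above.
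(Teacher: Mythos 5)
Your proposal is correct and is essentially the paper's argument written out in full: the paper's proof consists of the single normalization $[X_\alpha,\sigma(X_\alpha)]=-\ch\alpha$ (equivalently, with the Chevalley normalization $[X_\alpha,X_{-\alpha}]=\ch\alpha$, your condition $\tau(X_\alpha)=-X_{-\alpha}$), leaving implicit the $SU(2)$ computation and the check on $T_2$ that you supply. Your reading of $H$ as a typo for $T$ is also the intended one.
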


\begin{proof}
This is just a version of the standard result that if $G$ is compact
then the $\Norm_{G}(T)/T\simeq \Norm_{G(\C)}(T(\C))/T(\C)$.  
To be
precise: choose $\{X_\alpha\}$ so that
$[X_\alpha, \sigma(X_\alpha)]=-\ch\alpha$, where $\sigma$ is complex
conjugation of $\text{Lie}(G(\C))$ with respect to $\text{Lie}(G)$.
\end{proof}

We dispense here with a case in which it is easy to compute $\o(w,G)$. 

\begin{lemma}
\label{l:odd}
Suppose $w\in W\delta$ has odd order. Then $\o(w,G)=o(w)$.
\end{lemma}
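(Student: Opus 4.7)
The plan is to exhibit, inside the Tits group, an explicit lift of $w$ whose order equals $o(w)$. Set $n = o(w)$ and let $\tau = \sigma(w)$, where $\sigma: \deltaW \to \deltaT$ is the set-theoretic splitting described in Lemma \ref{l:tits}(3) (extended to the twisted setting by $\sigma(\delta) = \delta$). By Lemma \ref{l:tits}(2), the kernel of $p: \deltaT \to \deltaW$ equals $T_2$, so $\tau^n$ lies in $T_2$ and in particular has order $1$ or $2$.

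My candidate for a low-order lift is $\tau^{n+1}$. First I would check that it lifts $w$: since $w^n = 1$, we get $p(\tau^{n+1}) = w^{n+1} = w$. Next I would compute its order: because $\tau^n$ commutes with $\tau$, we have $(\tau^{n+1})^n = \tau^{n(n+1)} = (\tau^n)^{n+1}$, and since $n$ is odd the exponent $n+1$ is even, so $(\tau^n)^{n+1} = 1$ whether $\tau^n$ is trivial or an involution. Hence $o(\tau^{n+1})$ divides $n$, and as it lifts the order-$n$ element $w$ its order is exactly $n$.

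This produces a lift of $w$ of order $n = o(w)$ in $\deltaN$, giving $\o(w, G) \le o(w)$; the reverse inequality is immediate from the quotient map $p$. I don't foresee a real obstacle — the only ingredients are the existence of the canonical Tits section and the parity trick that $n+1$ is even, which automatically absorbs the potential $T_2$-discrepancy between $\tau^n$ and $1$. Note also that the argument is uniform and does not require any hypothesis on $\tchar(F)$ or on the isogeny type of $G$.
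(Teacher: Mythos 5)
Your proof is correct and is essentially the paper's argument: the paper also passes to the power $g^{d+1}$ of a lift $g$ with $g^{2d}=1$ and uses the parity of $d+1$ to kill the order-$2$ discrepancy (phrased there as an instance of Schur--Zassenhaus for the cyclic group $\langle g\rangle$). Your version, which fixes $g=\sigma(w)$ so that $g^{o(w)}\in T_2$ is guaranteed, is the same computation made explicit.
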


\begin{proof}
This is an immediate consequence of the Zassenhaus-Schur Lemma applied to the cyclic group generated by any lift of $w$. 
Concretely, let $d=o(w)$, and choose any lift $g$. If  $g^d=1$ then we are done. Otherwise replace $g$ with $g^{d+1}$: $(g^{d+1})^d=(g^{2d})^{\frac{d+1}2}=1$. 
\end{proof}

We also mention a basic reduction to simple groups, 
using the following Lemma,
which is 
proved in the same way as \cite[Lemma 2.7]{GKP}.

\begin{lemma}
Suppose $G=G_1\times G_1\times\dots \times G_1$, with $r$ factors, and $\delta$ acts
cyclically on the factors, so $\delta^r$ is an
automorphism of   the first factor.
Write $W=W_1\times\dots\times W_1$ for the Weyl group.
Then the twisted Weyl groups $\deltaW$ and 
$\negthinspace\negthinspace\phantom{a}^{\delta^r}W_1$ are defined, and
there is a natural bijection
$$\{\delta-\text{twisted conjugacy classes in }W\} \longleftrightarrow 
\{\delta^r-\text{twisted conjugacy classes in }W_1\}.
$$
\end{lemma}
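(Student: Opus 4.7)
\begin{proof_sketch}
The plan is to write down the bijection explicitly as the composition of a normal-form reduction with a product map, exactly paralleling \cite[Lemma 2.7]{GKP}.

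First I would fix notation. Write $\psi$ for the automorphism $\delta^r$ restricted to the first factor $W_1$, and (up to relabelling factors) describe the cyclic action as
$$\delta(w_1,\dots,w_r)\delta^{-1}=(w_2,w_3,\dots,w_r,\psi(w_1)).$$
In particular $\delta^r$ acts on $W=W_1^{r}$ coordinatewise by $\psi$, so $\negthinspace\negthinspace\phantom{a}^{\delta^r}W_1=W_1\rtimes\langle\psi\rangle$ is defined. The elements of $W\delta$ are written $(w_1,\dots,w_r)\delta$.

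Next I would establish a normal form. Conjugating $(w_1,\dots,w_r)\delta$ by $(x_1,\dots,x_r)\in W$ yields
$$(x_1w_1x_2^{-1},\,x_2w_2x_3^{-1},\,\dots,\,x_{r-1}w_{r-1}x_r^{-1},\,x_rw_r\psi(x_1)^{-1})\,\delta.$$
Setting $x_1=1$ and then $x_{i+1}=x_iw_i$ for $i=1,\dots,r-1$ forces the first $r-1$ coordinates to be $1$, and the last coordinate becomes $w:=w_1w_2\cdots w_r$. Hence every $\delta$-twisted conjugacy class contains an element of the form $(1,\dots,1,w)\delta$.

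Then I would determine when two such normal forms are $\delta$-conjugate. If $(x_1,\dots,x_r)$ conjugates $(1,\dots,1,w)\delta$ to $(1,\dots,1,w')\delta$, the equations from the first $r-1$ coordinates force $x_1=x_2=\cdots=x_r=:x$, and then the last coordinate gives $w'=xw\psi(x)^{-1}$. Thus two normal forms are equivalent precisely when their terminal entries are $\psi$-twisted conjugate in $W_1$, i.e.\ $\delta^r$-twisted conjugate. This shows that the map
$$[(w_1,\dots,w_r)\delta]\longmapsto[w_1w_2\cdots w_r]_{\psi}$$
is well-defined and injective; surjectivity is immediate since $(1,\dots,1,w)\delta$ maps to $[w]_\psi$ for any $w\in W_1$.

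The only real subtlety is bookkeeping of the cyclic action and the order of the product, which one must choose consistently so that the telescoping in the normal-form step actually works. Once that convention is fixed the argument is essentially formal, and this is why the lemma can simply cite the proof in \cite{GKP}.
\end{proof_sketch}
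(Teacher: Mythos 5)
Your argument is correct, and it is exactly the normal-form/telescoping reduction that the paper invokes by citing \cite[Lemma 2.7]{GKP} (the paper gives no independent proof, only that reference). The computation of the conjugation action, the reduction to representatives $(1,\dots,1,w)\delta$, and the identification of when two such representatives are equivalent all match the standard argument the authors have in mind.
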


\sec{Involutions}
\label{s:involutions}
Suppose $G$ is as in Section \ref{s:tits}, $\delta$ is a distinguished
automorphism of finite order of $G$, and
$\deltaW=W\rtimes\langle\delta\rangle$.

\begin{lemma}
\label{l:involutions}
Suppose  $w\in W\delta$ is an involution. 
Then $\sigma(w)^2=(w\ch\rho-\ch\rho)(-1)$. 
If $w_0$ is the longest element of $W$ then $(\sigma(w_0\delta))^2=z_G\delta^2$.
\end{lemma}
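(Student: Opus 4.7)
\begin{proof_sketch}
The plan is to establish the general identity
\[
\sigma(v)\sigma(v^{-1}) = (v\ch\rho - \ch\rho)(-1) \qquad (v \in W),
\]
and then deduce both assertions of the lemma from it.

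I would prove the displayed identity by induction on $\ell(v)$. The base case $v = s_\alpha$ is immediate: $\sigma_\alpha^2 = m_\alpha = \ch\alpha(-1)$, while $s_\alpha\ch\rho - \ch\rho = -\ch\alpha$ and $(-\ch\alpha)(-1) = \ch\alpha(-1)$. For the inductive step, factor $v = v's_\alpha$ with $\ell(v) = \ell(v') + 1$, so that $\sigma(v) = \sigma(v')\sigma_\alpha$ and $\sigma(v^{-1}) = \sigma_\alpha\sigma(v'^{-1})$ by Lemma~\ref{l:tits}. Then
\[
\sigma(v)\sigma(v^{-1}) = \sigma(v')\,\sigma_\alpha^2\,\sigma(v'^{-1}) = \sigma(v')\,m_\alpha\,\sigma(v'^{-1}),
\]
and moving $m_\alpha$ past $\sigma(v')$ via $\sigma(v')\,m_\alpha = v'(m_\alpha)\,\sigma(v') = (v'\ch\alpha)(-1)\,\sigma(v')$, followed by the inductive hypothesis, yields $(v'\ch\alpha + v'\ch\rho - \ch\rho)(-1)$. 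This matches $(v\ch\rho - \ch\rho)(-1) = (v'\ch\rho - v'\ch\alpha - \ch\rho)(-1)$ because the discrepancy is $2\,v'\ch\alpha \in 2X_*$, which evaluates to $1$.

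Given the identity, the first statement of the lemma follows by writing any involution $w \in W\delta \subseteq \deltaW$ as $w = v\delta^k$ with $v \in W$. Projecting $w^2 = 1$ to $\deltaW/W$ forces $\delta^{2k} = 1$, and the remaining relation is $\delta^k(v) = v^{-1}$. Therefore
\[
\sigma(w)^2 = \sigma(v)\,\delta^k\,\sigma(v)\,\delta^k = \sigma(v)\,\sigma(\delta^k(v))\,\delta^{2k} = \sigma(v)\sigma(v^{-1}) = (v\ch\rho - \ch\rho)(-1),
\]
which equals $(w\ch\rho - \ch\rho)(-1)$ since $\delta$ fixes $\ch\rho$. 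For the second statement, apply the first to the involution $w_0 \in W$: using $w_0\ch\rho = -\ch\rho$, we obtain $\sigma(w_0)^2 = (-2\ch\rho)(-1) = z_G$. Since $\delta$ preserves the pinning, $\delta(w_0) = w_0$, and hence $\delta$ commutes with $\sigma(w_0)$ in $\deltaT$, so
\[
(\sigma(w_0\delta))^2 = (\sigma(w_0)\delta)^2 = \sigma(w_0)^2\,\delta^2 = z_G\delta^2.
\]

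The main technical point is the inductive step for the key identity: one must isolate $\sigma_\alpha^2 = m_\alpha$ in the middle of the product and apply the conjugation rule of Lemma~\ref{l:tits} to rewrite it as a character of the form $v'\ch\alpha$; the remaining verification is straightforward arithmetic modulo $2X_*$ in the coroot lattice.
\end{proof_sketch}
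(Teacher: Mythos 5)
Your proof is correct and follows the same skeleton as the paper's: reduce $\sigma(w)^2$ for a twisted involution to the product $\sigma(v)\sigma(v^{-1})$, and handle $w_0\delta$ via the $\delta$-invariance of $w_0$ and $\sigma(w_0)$. The one substantive difference is that the paper simply cites the identity $\sigma(v)\sigma(v^{-1})=(v\ch\rho-\ch\rho)(-1)$ from Lemma~5.4 of \cite{contragredient}, whereas you supply a self-contained induction on $\ell(v)$; your inductive step (peeling off $\sigma_\alpha^2=m_\alpha$, conjugating it through $\sigma(v')$ to get $(v'\ch\alpha)(-1)$, and checking the two exponents agree modulo $2X_*$ using $s_\alpha\ch\rho=\ch\rho-\ch\alpha$) is exactly right, and your handling of a general power $\delta^k$ is slightly more general than the paper's, which assumes $\delta^2=1$. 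So the proposal buys self-containedness at the cost of a page of routine computation; nothing is missing.
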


\begin{proof}
For the first assertion,  by assumption $\delta(w)=w\inv$ and $\delta^2=1$, so 
$(\sigma(w)\delta)^2=\sigma(w)\delta\sigma(w)\delta
=\sigma(w)\sigma(\delta(w))\delta^2
=\sigma(w)\sigma(w\inv).
$
Apply \cite[Lemma 5.4]{contragredient}. The second statement follows from this, and the fact that 
$w_0$ and $\sigma(w_0)$ are fixed by every distinguished automorphism \cite[Lemma 5.3]{contragredient}.
\end{proof}

Let $S$ be a subset of the simple roots, with corresponding
Levi factor $L(S)$ and Weyl group $W(S)$.  Then the pinning for $G$
restricts to a pinning for $L(S)$, and the Tits group for $L$ embeds
naturally in that for $G$. If $S$ is $\delta$-stable the same holds
for the extended Tits groups.  Let $w_0(S)$ be a longest element of
the Weyl group $W(S)$. Let $\ch\rho(S)$ be one-half the sum of the
positive coroots of $L(S)$, and let  $z_{S}=z_{L(S)}=(2\ch\rho(S))(-1)$ be the
principal involution in $L(S)$.

The preceding Lemma applied to $L(S)$ gives:

\begin{lemma}
\label{l:w_0(S)}
Suppose $S\subset \Pi$ is a  set of simple roots. Then
$$
\sigma(w_0(S))^2=(2\ch\rho(S))(-1)=z_{S}.
$$  
If $\delta$ is a distinguished involution and $S$ is $\delta$-stable then 
$\delta(\sigma(w_0(S)))=\sigma(w_0(S))$ and 
$(\sigma(w_0(S))\delta)^2=z_S\delta^2$.
\end{lemma}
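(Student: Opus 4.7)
The plan is to deduce both statements from Lemma \ref{l:involutions} by applying it inside the Levi subgroup $L(S)$. First I would verify that the chosen pinning of $G$ restricts to a pinning of $L(S)$ whose associated Tits group sits inside $\tits$ as the subgroup generated by $\{\sigma_\alpha:\alpha\in S\}$ together with $T_2\cap L(S)$. Lemma \ref{l:tits}(3) then ensures that the splitting for $L(S)$ agrees with the restriction of $\sigma:W\to\tits$ to $W(S)$, since any expression for $w\in W(S)$ in the generators $\{s_\alpha:\alpha\in S\}$ that is reduced in $W(S)$ is automatically reduced in $W$.

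Granting this, the first identity is immediate. The longest element $w_0(S)\in W(S)$ is an involution, so applying Lemma \ref{l:involutions} inside $L(S)$ (with trivial twist) gives
$$\sigma(w_0(S))^2 = (w_0(S)\ch\rho(S)-\ch\rho(S))(-1).$$
Since $w_0(S)$ sends every positive coroot of $L(S)$ to its negative, we have $w_0(S)\ch\rho(S)=-\ch\rho(S)$; combined with the fact that $\mu(-1)$ always has order dividing $2$, so that $(-\mu)(-1)=\mu(-1)$ for any $\mu\in X_*$, the right-hand side collapses to $(2\ch\rho(S))(-1)=z_S$.

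For the twisted assertion, suppose $S$ is $\delta$-stable. Then $\delta$ preserves $L(S)$ together with its restricted pinning, so $\delta|_{L(S)}$ is a distinguished automorphism of $L(S)$. Applying \cite[Lemma 5.3]{contragredient} inside $L(S)$ (exactly as in the proof of Lemma \ref{l:involutions}) shows that both $w_0(S)$ and $\sigma(w_0(S))$ are fixed by $\delta$. A short expansion
$$(\sigma(w_0(S))\delta)^2 = \sigma(w_0(S))\bigl(\delta\sigma(w_0(S))\delta^{-1}\bigr)\delta^2 = \sigma(w_0(S))^2\,\delta^2$$
combined with the first identity then yields $z_S\delta^2$. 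The argument is essentially bookkeeping; the only point requiring care is the compatibility of the Tits-group splittings noted at the start, and I do not anticipate any substantial obstacle.
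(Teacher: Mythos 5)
Your proposal is correct and follows the paper's own route exactly: the paper proves this lemma by the one-line observation that Lemma \ref{l:involutions} applied to the Levi factor $L(S)$ gives the result, having already recorded (just before the lemma) that the pinning and Tits group of $G$ restrict compatibly to $L(S)$, and likewise for the extended Tits groups when $S$ is $\delta$-stable. Your additional bookkeeping --- that $w_0(S)\ch\rho(S)=-\ch\rho(S)$ and that $(-\mu)(-1)=\mu(-1)$, plus the expansion of $(\sigma(w_0(S))\delta)^2$ --- correctly fills in the details the paper leaves implicit.
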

  
\begin{lemma}
\label{l:-1}

Suppose $\delta^2=1$ and  $w\in W\delta$ acts by inverse on $T$. Then $w$ is elliptic, and if $g$ is any lift of $w$ then $g^2=z_G$.
Furthermore 
$$
\o(w,G)=o(\sigma(w))=
\begin{cases}
2&\ch\rho\in X_*(T)\\  
4&\text{otherwise}
\end{cases}
$$
\end{lemma}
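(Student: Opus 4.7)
The plan is to first note that $w$ acts as $-1$ on $X_*(T)\otimes\R$, so it has no nonzero fixed vectors, which gives ellipticity for free. The main body of the proof is to show $g^2=z_G$ for every lift $g$, and then to decide when $z_G=1$.

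Any lift of $w$ to $\deltaN$ has the form $g=\sigma(w)t$ with $t\in T$. Conjugation by $\sigma(w)$ acts on $T$ as $w$, and by hypothesis $w(t)=t^{-1}$, so $\sigma(w)t=t^{-1}\sigma(w)$. Expanding gives
\[
g^2=\sigma(w)t\sigma(w)t=t^{-1}\sigma(w)^2 t=\sigma(w)^2,
\]
the last equality because $\sigma(w)^2\in T$ is centralized by $t$. Applying Lemma \ref{l:involutions} with $w\ch\rho=-\ch\rho$ I get $\sigma(w)^2=(w\ch\rho-\ch\rho)(-1)=(-2\ch\rho)(-1)=(2\ch\rho)(-1)=z_G$, where the penultimate equality uses that $(2\ch\rho)(-1)$ has order at most $2$ and hence equals its inverse. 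This already shows $g^2=z_G$ is the same for every lift and that $\o(w,G)=o(\sigma(w))$.

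For the order, I would test $z_G$ against characters: for $\chi\in X^*(T)$,
\[
\chi(z_G)=(-1)^{\langle\chi,2\ch\rho\rangle}=(-1)^{2\langle\chi,\ch\rho\rangle},
\]
which equals $1$ for every $\chi$ iff $\langle\chi,\ch\rho\rangle\in\Z$ for every $\chi\in X^*(T)$, iff $\ch\rho\in X_*(T)$. So if $\ch\rho\in X_*(T)$ then $z_G=1$, $g^2=1$, and since $w\neq 1$ in $\deltaW$ forces $g\neq 1$, we conclude $o(g)=2$. Otherwise $z_G$ is a nontrivial element of order $2$, so $g^2\neq 1$ while $g^4=z_G^2=1$, giving $o(g)=4$. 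The only mildly delicate step is the duality argument characterizing $z_G=1$ by $\ch\rho\in X_*(T)$; everything else is a direct consequence of Lemma \ref{l:involutions} and the fact that $w$ acts as inversion on $T$.
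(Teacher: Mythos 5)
Your proof is correct and follows the same route as the paper, which simply declares the lemma an immediate consequence of Lemma \ref{l:involutions}; you have filled in exactly the details that "immediate" elides (every lift is $\sigma(w)t$, conjugation by $t$ fixes $\sigma(w)^2\in T$, and the duality between $z_G=1$ and $\ch\rho\in X_*(T)$ via the perfect pairing). No gaps.
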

This is an immediate consequence of Lemma \ref{l:involutions}.
\sec{Lifting of the Weyl group}
\label{s:lifting}

In this section we assume $F$ is algebraically closed. 

We say {\it $W$ lifts to $G$} if the exact sequence \eqref{e:exactW} splits,
i.e. there is a group homomorphism $\phi:W\rightarrow N$ satisfying $p(\phi(w))=w$ for all $w\in W$. 
If this holds then  $W$ is isomorphic to a subgroup of $N$, and 
{\it a fortiori} $\o(w,G)=o(w)$ for all $w\in W$, and 
$o(\sigma(w))=o(w)$ for all elliptic $w\in W\delta$.

The case of characteristic $2$ is easy.

\begin{proposition}
Suppose $F$ has characteristic $2$. Then the Tits group $\tits\subset N$
is isomorphic to $W$. 
\end{proposition}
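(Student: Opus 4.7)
The plan is to observe that in characteristic $2$ the subgroup $T_2$ collapses to the identity, so the exact sequence in Lemma \ref{l:tits}(2) forces $\tits \cong W$.

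First I would argue that $T_2 = \{1\}$. Recall that $T_2$ is generated by the elements $m_\alpha = \ch\alpha(-1)$ for $\alpha \in \Pi$. Since $F$ has characteristic $2$ we have $-1 = 1$ in $F$, so for every cocharacter $\mu \in X_*(T)$ the element $\mu(-1) = \mu(1)$ is the identity of $T$. Thus each $m_\alpha$ is trivial, and hence $T_2$ is trivial. (Equivalently, the multiplicative group $\mathbb{G}_m(F)$ has no nontrivial elements of order $2$ in characteristic $2$, since $t^2 = 1$ implies $(t-1)^2 = 0$ implies $t = 1$; as $T \cong \mathbb{G}_m^{\rank(T)}$, this forces the $2$-torsion subgroup of $T$ to be trivial.)

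Then I would invoke Lemma \ref{l:tits}(2), which gives the exact sequence
\begin{equation*}
1 \rightarrow T_2 \rightarrow \tits \rightarrow W \rightarrow 1.
\end{equation*}
Since the kernel $T_2$ is trivial, the projection $\tits \rightarrow W$ is an isomorphism. Alternatively one can read this directly off the presentation in Lemma \ref{l:tits}(1): with $T_2 = 1$ and each $m_\alpha = 1$, the relation $\sigma_\alpha^2 = m_\alpha$ becomes $\sigma_\alpha^2 = 1$, the conjugation relations on $T_2$ become vacuous, and only the braid relations remain, yielding exactly the Coxeter presentation of $W$.

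This is essentially a one-step reduction; there is no real obstacle, since the whole statement rests on the behavior of $2$-torsion in $T$. The only point worth being careful about is not to confuse ``elements of order $2$'' with ``elements satisfying $t^2 = 1$'' in the presentation, but these coincide since in characteristic $2$ the only solution of $t^2 = 1$ in $\mathbb{G}_m(F)$ is $t = 1$.
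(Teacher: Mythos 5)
Your argument is correct and is essentially the same as the paper's: both note that $T_2$ is generated by the elements $\ch\alpha(-1)$, all of which are trivial when $-1=1$ in $F$, and then conclude from the exact sequence in Lemma \ref{l:tits}(2) that $\tits\to W$ is an isomorphism. The extra remarks about the presentation and about $2$-torsion in $\mathbb{G}_m$ are harmless elaborations of the same one-step reduction.
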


\begin{proof}
By the exact sequence in Lemma \ref{l:tits}(2) the kernel of the map from $\tits$ to $W$ 
is $T_2$. But $T_2$ is generated by 
the elements $\ch\alpha(-1)$, all of which are trivial in
characteristic $2$.
\end{proof}

For the remainder of this section we assume $\tchar(F)\ne 2$, and
determine the  simple groups $G$ for which \eqref{e:exactW} splits.

We first address the question of the uniqueness of a splitting.
Let $R\subset X^*$ be the root lattice, and $\ch R\subset X_*$ the coroot lattice.  Set $Z=Z(G)$.

\begin{lemma}
\label{l:connectedtorus}
Fix $\mu\in X_*(T)_\Q$. Define 
$$
\mu^\perp=\{\gamma\in R\mid \langle\gamma,\mu\rangle=0\}
$$
and 
$$
S=\bigcap_{\gamma\in \mu^\perp}\ker(\gamma)\subset T.
$$
Then $S/Z$ is a (connected) torus.
If $\mu$ is a coroot  then $\dim(S/Z)=1$.
\end{lemma}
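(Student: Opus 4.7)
The plan is to identify $S/Z$ as a closed subgroup of the adjoint torus $T/Z$ and to read off both connectedness and dimension from its character lattice.

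First, I would check that $L := \mu^\perp$ is saturated in $R$: by definition it is the kernel of the homomorphism $R \to \Q$, $\gamma \mapsto \langle\gamma,\mu\rangle$, and since $\Q$ is torsion-free, $R/L$ embeds in $\Q$ and is itself torsion-free.

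Since $L \subset R$, one has $S \supset \bigcap_{\gamma \in R}\ker(\gamma) = Z$, so $S/Z$ makes sense as a closed subgroup of $T/Z$. The latter is a Cartan of $\Gad$, a torus whose character group is canonically identified with the root lattice $R$ via the duality between sublattices of $X^*(T)$ and closed subgroups of $T$ (valid since $F$ is algebraically closed). Under this identification, the annihilator of $S/Z$ in $R$ is exactly $L$, so the character group of $S/Z$ is $R/L$, which is torsion-free by the preceding step. A closed subgroup of a torus with torsion-free character group is itself a torus, and in particular is connected.

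For the dimension, $\dim(S/Z) = \rank(R/L) = \rank(R) - \rank(L)$. When $\mu = \ch\alpha$ is a coroot, the value $\langle\alpha,\ch\alpha\rangle = 2$ is nonzero, so the homomorphism $R \to \Q$ above has nonzero image, $L$ has corank one in $R$, and $\dim(S/Z) = 1$. The only delicate point is the duality identification itself, which can degenerate in characteristics dividing $[X^*:R]$; under the standing hypothesis $\tchar(F) \ne 2$, and since saturation of $L$ ensures that $R/L$ carries no additional torsion, this poses no difficulty.
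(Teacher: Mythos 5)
Your proof is correct. For the connectedness of $S/Z$ you follow essentially the same route as the paper: the key observation in both cases is that $R/\mu^\perp$ is torsion-free (you phrase this as saturation of $\mu^\perp$, via the embedding of $R/\mu^\perp$ into $\Q$), so the character group of $S/Z$ is free and $S/Z$ is a torus; the paper instead writes $X^*(S)=X^*(T)/\mu^\perp$, treats the adjoint case first, and then pulls back, while you work directly inside $T/Z$, which is marginally cleaner. Where you genuinely diverge is the dimension count. The paper conjugates $\alpha$ to the highest root and argues case-by-case that $(\ch\alpha)^\perp$ contains $\rank(R)-1$ independent elements: outside type $A_n$ these are the simple roots orthogonal to the highest root, and in type $A_n$ one supplements the $n-2$ orthogonal simple roots with $\delta-\epsilon$. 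You bypass all of this by noting that $R/(\ch\alpha)^\perp$ is a finitely generated subgroup of $\Q$, hence cyclic, and is nonzero because $\langle\alpha,\ch\alpha\rangle=2$, so it has rank exactly one. Your argument is shorter and case-free; the paper's explicit description of which roots span $(\ch\alpha)^\perp$ is not wasted, though, since a refined version of it is needed later (Lemma \ref{l:lattices}) where the rank argument alone would not suffice. Your closing remark about the characteristic is slightly off target (the relevant primes would be those dividing $[X^*:R]$, not $2$), but as you note the torsion-freeness of $R/\mu^\perp$ makes the issue moot, so nothing is lost.
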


\begin{proof}
It is straightforward to see 
that $X^*(S)=X^*(T)/\mu^\perp$. 
If $G$ is adjoint then $X^*(T)$ is the root lattice $R$. It is obvious that
$R/\mu^\perp$ is torsion free, which implies $S$ is connected. 
In general
$S$ is the inverse image of a connected torus in $\Gad$, so $S=S_0Z$.

Suppose $\alpha\in \Delta$. After passing to the dual root system if
necessary we may assume $\alpha$ is long, and after conjugating by $W$
that it is the highest root. Except in type $A_n$ the highest root is
orthogonal to all but $1$ simple root. In type $A_n$ $(n\ge 2)$
$\alpha$ is orthogonal to $n-2$ simple roots. If $\delta,\epsilon$
are the remaining two simple roots then $\langle
\delta-\epsilon,\ch\alpha\rangle=0$. This proves the final assertion.
\end{proof}

\begin{lemma}
\label{l:findw}
Fix  $\alpha\in\Delta$. Suppose $t\in T$
satisfies: $\beta(t)=1$ for all $\beta\in(\ch\alpha)^\perp$.   
Then there exists $w\in F^\times$ such that $\ch\alpha(w)t\in Z$.
\end{lemma}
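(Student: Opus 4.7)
The plan is to apply Lemma~\ref{l:connectedtorus} with $\mu=\ch\alpha$, which reduces the statement to a surjectivity assertion for a morphism of one-dimensional tori. With this choice of $\mu$, the defining condition $\beta(t)=1$ for all $\beta\in(\ch\alpha)^\perp$ is precisely the statement that $t$ lies in
$$
S=\bigcap_{\gamma\in(\ch\alpha)^\perp}\ker(\gamma),
$$
and Lemma~\ref{l:connectedtorus} tells us $S/Z$ is a connected torus of dimension $1$.

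Next I would observe that $\ch\alpha$ itself factors through $S$. Indeed, for any $\gamma\in R$, one has $\gamma(\ch\alpha(x))=x^{\langle\gamma,\ch\alpha\rangle}$, which is trivial whenever $\gamma\in(\ch\alpha)^\perp$. Thus $\ch\alpha(F^\times)\subset S$, and composing with the projection $S\to S/Z$ we obtain a homomorphism of algebraic groups $\ch\alpha\colon \mathbb{G}_m\to S/Z$.

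I would then argue that this homomorphism is surjective on $F$-points. It is nontrivial: since $\alpha(\ch\alpha(x))=x^2$ and $Z\subset\ker(\alpha)$, and $F^\times$ (being algebraically closed, hence infinite) contains elements $x$ with $x^2\ne 1$, we have $\ch\alpha(F^\times)\not\subset Z$. Any nontrivial homomorphism between one-dimensional tori is an isogeny of the form $x\mapsto x^n$ with $n\ne 0$, and over the algebraically closed field $F$ such a map is surjective on $F$-points.

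Applying surjectivity to the image of $t$ in $S/Z$, there exists $u\in F^\times$ such that $\ch\alpha(u)$ and $t$ have the same image in $S/Z$, equivalently $\ch\alpha(u)^{-1}t\in Z$; taking $w=u^{-1}$ gives the conclusion. The only even mildly subtle point is the surjectivity step, but once one notes that the previous lemma makes $S/Z$ one-dimensional and that $\ch\alpha$ is manifestly nontrivial modulo the center, this is immediate.
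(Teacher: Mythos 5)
Your proof is correct and follows essentially the same route as the paper: identify $t$ as an element of $S=\bigcap_{\gamma\in(\ch\alpha)^\perp}\ker(\gamma)$, note $\ch\alpha(F^\times)\subset S$, invoke Lemma \ref{l:connectedtorus} to see $S/Z$ is a one-dimensional torus, and conclude that $F^\times\to S/Z$ is surjective. You simply spell out the surjectivity step (nontriviality of $\ch\alpha$ modulo $Z$ plus surjectivity of isogenies of $\mathbb{G}_m$ over an algebraically closed field) that the paper leaves implicit.
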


\begin{proof}
Let $S=\cap_{\gamma\in(\ch\alpha)^\perp}\ker(\gamma)$. 
Then $\ch\alpha(F^\times)\subset S$. By the Lemma above $S/Z$ is a one
dimensional torus, so the map $F^\times\overset{\ch\alpha}\longrightarrow S\rightarrow S/Z$
is surjective.
\end{proof}

Suppose $\alpha\in\Delta$. It is well known (and easy to check) that,
except in type $A_n$, 
\begin{equation}
\label{e:twolattices}
(\ch\alpha)^\perp=\Z\langle\{\beta\in\Delta\mid
\langle\beta,\ch\alpha\rangle=0\}\rangle.
\end{equation}

We need a variant of this. We only need the simply laced case.

\begin{lemma}
\label{l:lattices}
Suppose $G$ is simply laced and no simple factor is of type $A_3$ or
$D_4$. 
Fix $\alpha\in\Delta$.
Let $\Delta(\alpha)=\{\beta\in \Delta\mid \langle
\beta,\ch\alpha\rangle=0\}$. This is a root system. 
Consider the lattice $L$ spanned by 

\begin{subequations}
\renewcommand{\theequation}{\theparentequation)(\alph{equation}}  
\label{e:lattices}  
\begin{equation}
\{\beta\in \Delta(\alpha)\mid\text{ the
    simple factor of }\beta\,\text{ in }\,\Delta(\alpha)\text { is not of
  type }A_1\}
\end{equation}
and
\begin{equation}
\{2\delta+\alpha\mid \delta\in\Delta, \langle
\delta,\ch\alpha\rangle=-1\}
\end{equation}
Then $L=(\ch\alpha)^\perp$.
\end{subequations}
\end{lemma}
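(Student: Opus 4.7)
The plan is to reduce to the case $G$ simple and $\alpha=\theta$, the highest root. Since a simply-laced irreducible root system has a single $W$-orbit of roots, and since $(\ch\alpha)^\perp$, $\Delta(\alpha)$, (a), and (b) are all $W$-equivariant, we may assume $\alpha=\theta$; likewise the statement decomposes compatibly over simple factors of $G$. The inclusion $L\subseteq(\ch\alpha)^\perp$ is immediate: elements of (a) are in $\Delta(\alpha)$ by definition, and $\langle 2\delta+\alpha,\ch\alpha\rangle=-2+2=0$ for $\delta$ as in (b).

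When $G$ is not of type $A_n$, equation \eqref{e:twolattices} reduces the reverse containment to showing $\Delta(\theta)\subseteq L$. An extended-Dynkin inspection identifies $\Delta(\theta)$ as $A_5$, $D_6$, $E_7$ for $G=E_6,E_7,E_8$ respectively, each irreducible and not of type $A_1$, so $\Delta(\theta)\subseteq$ (a). For $G=D_n$ with $n\ge 5$ one finds $\Delta(\theta)$ is of type $A_1+D_{n-2}$, where the $A_1$-component consists of $\pm\alpha_1$ and the $D_{n-2}$-component of the roots $\pm\epsilon_i\pm\epsilon_j$ with $3\le i<j\le n$; the latter lies in (a) but $\pm\alpha_1$ does not. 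To place $\alpha_1$ in $L$, I take $\delta=-\alpha_2\in\Delta$, which pairs with $\ch\theta$ to $-1$, so (b) contains $2\delta+\theta=\alpha_1+2\epsilon_3$; writing $2\epsilon_3=(\epsilon_3-\epsilon_4)+(\epsilon_3+\epsilon_4)$ as a sum of two roots of the $D_{n-2}$-component of $\Delta(\theta)$, hence of (a), yields $\alpha_1\in L$.

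For $G=A_n$ with $n\ne 3$, equation \eqref{e:twolattices} does not apply, and I compute in the standard $\epsilon$-coordinates. With $\theta=\epsilon_1-\epsilon_{n+1}$, the subsystem $\Delta(\theta)$ is $A_{n-2}$ on the indices $2,\ldots,n$, and $(\ch\theta)^\perp=\{\sum c_i\epsilon_i\in R:\sum c_i=0,\,c_1=c_{n+1}\}$; a direct lattice check gives $(\ch\theta)^\perp=\Z\Delta(\theta)+\Z(\epsilon_1+\epsilon_{n+1}-2\epsilon_2)$. For $n\ge 4$ the simple factor of $\Delta(\theta)$ is $A_{n-2}\ne A_1$, so $\Z\Delta(\theta)\subseteq$ (a), and $\epsilon_1+\epsilon_{n+1}-2\epsilon_2=2(\epsilon_{n+1}-\epsilon_2)+\theta$ lies in (b) since $\epsilon_{n+1}-\epsilon_2$ pairs with $\ch\theta$ to $-1$. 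The small cases $n=1$ (where $(\ch\alpha)^\perp=0$) and $n=2$ (where (a) is empty but (b) directly supplies a primitive generator $\alpha_1-\alpha_2$ of the rank-$1$ lattice $(\ch\theta)^\perp$) are handled by inspection.

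The main obstacle is extracting the non-(a) generators of $(\ch\alpha)^\perp$ from (b). Both the $D_n$ step and the $A_n$ step work by cancelling an even-multiple residue from an element of (b) against pairs of roots already supplied by (a), and this is exactly what breaks in the excluded cases: for $A_3$ and $D_4$ every component of $\Delta(\theta)$ that matters is of type $A_1$, so (a) cannot absorb the residue, while (b) by itself lands in $2R+\Z\theta$ and misses the generators of the $A_1$-components.
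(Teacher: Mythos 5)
Your proof is correct and follows essentially the same route as the paper's: reduce by $W$-conjugacy to the highest root in each simple factor, use \eqref{e:twolattices} outside type $A$, note that $\Delta(\theta)$ is irreducible in the $E$-types, recover the $A_1$-component in $D_n$ ($n\ge 5$) from one element of (b) plus the $D_{n-2}$-component, and in type $A_n$ exhibit the single extra generator of $(\ch\theta)^\perp$ modulo $\Z\Delta(\theta)$ as an element of (b). Your version is a bit more explicit in coordinates than the paper's, but the ideas coincide, including the treatment of the low-rank cases $A_1,A_2$.
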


\begin{proof}
The containment $L\subset(\ch\alpha)^\perp$ is obvious.

Since the statements only involves roots we may assume $G$ is  simple. It is easy to check $A_1,A_2$
directly, so  (since $A_3$ and $D_4$ are excluded) we may assume $G$ is of type $A_4$, or 
$\rank(G)\ge 5$.

After conjugating we may assume
$\alpha$ is the highest root. Assume $G$ is not of type $A_n$. Then
$\alpha$ is orthogonal to all but one simple root, and
these are the simple roots of $\Delta(\alpha)$. By \eqref{e:twolattices}
it is enough to show every simple root of $\Delta(\alpha)$ is 
in the span of (a) and (b).

In types $E_6,E_7$ and $E_8$, $\Delta(\alpha)$ is connected, the simple
factor condition in (a) is trivially satisfied, and the result is immediate. In type
$D_n$, $\Delta(\alpha)$ has type $A_1\times D_{n-2}$. If $n\ge 5$
there is only one $A_1$ factor. Taking $\delta$ to be the simple root
not orthogonal to $\alpha$, it is easy to see the roots of this factor
are in the $\Z$-span of (a) and (b).

Now suppose $G$ is of type $A_n$ with $n\ge 4$. 
In this case $\Delta(\alpha)$ is of type $A_{n-2}$, and there are two
simple roots $\delta,\epsilon$ non-orthogonal to $\alpha$.
Suppose $\gamma\in (\ch\alpha)^\perp$. Then $\gamma+c(2\delta+\alpha)$ is in the
$\Z$-span of (a) 
for some choice of integer $c$. 
\end{proof}

\begin{proposition}
\label{p:splittingunique}
Suppose either:
\begin{enumerate}
\item $G$ is simply laced,  and no simple factor is of type
  $A_3$ or $D_4$, or
\item $G$ is simply connected.
\end{enumerate}
Suppose  $\phi,\phi'$ are two splittings of   \eqref{e:exactW}.
Then there exists $t\in T$ and $\{z_w\in Z\mid w\in W\}$ such that
$\phi'(w)=z_wt\phi(w)t\inv$ for all $w\in W$.

The elements $z_w$ are determined by $\{z_\alpha=z_{s_\alpha}\mid \alpha\in
\Pi\}$, where $z_\alpha\in Z_2$ (the $2$-torsion subgroup of $Z$).  If $\alpha$ is conjugate to $\beta$ then  $z_\alpha=z_\beta$.
If (1) holds then $Z_2$ acts simply transitively on the set of splittings. 
\end{proposition}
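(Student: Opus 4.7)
Write $\phi'(w)=c(w)\phi(w)$; as both $\phi$ and $\phi'$ are homomorphisms, $c\colon W\to T$ is a $1$-cocycle, $c(w_1w_2)=c(w_1)\cdot w_1(c(w_2))$.  The relation $\phi'(s_\alpha)^2=\phi(s_\alpha)^2=1$ gives $s_\alpha(c(s_\alpha))=c(s_\alpha)^{-1}$, which on characters forces $\beta(c(s_\alpha))^2=1$ for $\beta\in(\ch\alpha)^\perp$; a direct computation applied to a character $\delta$ with $\langle\delta,\ch\alpha\rangle=-1$ further yields $(2\delta+\alpha)(c(s_\alpha))=1$.  The plan is to modify $\phi'$ by inner conjugation by some $t\in T$ so the resulting cocycle takes values in $Z$ at every simple reflection, and then extract the claimed homomorphism $W\to Z_2$ from what remains.

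The key technical step is to show $c(s_\alpha)\in\ch\alpha(F^\times)\cdot Z$ for each simple $\alpha$; by Lemma \ref{l:findw} this is equivalent to $\beta(c(s_\alpha))=1$ (not merely $\pm 1$) for all $\beta\in(\ch\alpha)^\perp$.  Under hypothesis (1), Lemma \ref{l:lattices} presents $(\ch\alpha)^\perp$ as spanned by the roots $2\delta+\alpha$ (handled above) together with roots in the non-$A_1$ components of $\Delta(\alpha)$; the latter are dispatched by combining the braid-relation cocycle identity $c(s_i)\cdot s_i(c(s_j))\cdot s_is_j(c(s_i))=c(s_j)\cdot s_j(c(s_i))\cdot s_js_i(c(s_j))$ with the commutation of $s_\alpha$ with reflections inside that component to eliminate the sign ambiguity.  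Under hypothesis (2), the structure $X_*(T_{\mathrm{sc}})=\ch R$ implies that the $(-1)$-eigensublattice of $s_\alpha$ is $\Z\ch\alpha$, so that $T_{\mathrm{sc}}^{s_\alpha,-}$ has connected component $\ch\alpha(F^\times)$ and the finite component group is covered by $Z_{\mathrm{sc}}$ (an explicit computation in weight coordinates).  Once $c(s_\alpha)=\ch\alpha(u_\alpha)z_\alpha$ is established, the coweight identity $s_\beta(\ch\omega_\alpha)=\ch\omega_\alpha-\delta_{\alpha\beta}\ch\beta$ shows that $\bar t:=\prod_\alpha\ch\omega_\alpha(u_\alpha^{-1})\in T_{\mathrm{ad}}$ has coboundary $\bar t\cdot s_\alpha(\bar t)^{-1}=\ch\alpha(u_\alpha)^{-1}$ at each simple $\alpha$; any lift $t\in T$ of $\bar t$ (well defined modulo $Z$) then makes $\Int(t)\phi'$ agree with $\phi$ up to central values on simple reflections.

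The remaining claims are formal.  Setting $z_w:=\phi'(w)\cdot(t\phi(w)t^{-1})^{-1}$, centrality at the simple generators together with the cocycle identity collapses to $z_{w_1w_2}=z_{w_1}z_{w_2}$ (since $W$ fixes $Z$ pointwise), so $w\mapsto z_w$ is a homomorphism $W\to Z$.  The relation $s_\alpha^2=1$ forces $z_\alpha\in Z_2$, and since $Z$ is abelian the homomorphism factors through $W^{\mathrm{ab}}$, giving $z_\alpha=z_\beta$ whenever $s_\alpha,s_\beta$ are $W$-conjugate.  For the simply transitive action under (1), each simply laced irreducible factor of $W$ has $W^{\mathrm{ab}}\cong\Z/2$ with the sign character as the unique nontrivial homomorphism, whence $\Hom(W,Z_2)\cong Z_2$; conversely, every $z\in Z_2$ yields a genuine splitting $\phi_z(w):=z^{\ell(w)}\phi(w)$ (using that parity of length is a homomorphism $W\to\Z/2$), producing a free and transitive $Z_2$-action on the set of splittings modulo $T$-conjugacy.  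The main obstacle is the sharpening in the second paragraph: upgrading $\beta(c(s_\alpha))^2=1$ to $\beta(c(s_\alpha))=1$ on roots from non-$A_1$ components of $\Delta(\alpha)$; this is precisely what fails in types $A_3$ and $D_4$, which is why they are excluded from hypothesis (1).
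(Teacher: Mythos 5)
Your treatment of case (1) and of the formal endgame (the cocycle formalism, the homomorphism $w\mapsto z_w$ factoring through $W^{\mathrm{ab}}$, the construction $\phi_z(w)=z^{\ell(w)}\phi(w)$ for simple transitivity) is essentially the paper's argument: the paper likewise writes $\phi'(s_\alpha)=t_\alpha\phi(s_\alpha)$, derives $(2\delta+\alpha)(t_\alpha)=1$ from the quadratic relation, derives $\beta(t_\alpha)=1$ for $\beta$ in a non-$A_1$ component of $\Delta(\alpha)$ from the commutator $\{\phi(s_\alpha),\phi(s_\beta)\}=1$ by pairing the identity $\ch\alpha(\alpha(t_\beta))=\ch\beta(\beta(t_\alpha))$ against a root $\gamma$ of that component with $\langle\gamma,\ch\alpha\rangle=0$, $\langle\gamma,\ch\beta\rangle=-1$, and then invokes Lemmas \ref{l:lattices} and \ref{l:findw}. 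Your sketch of how the ``sign ambiguity'' is eliminated is vague at exactly this point, but the idea is the right one.

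Case (2), however, contains a genuine gap. You claim that the quadratic relation alone, i.e.\ $t_\alpha\in T^{s_\alpha,-}:=\{t: s_\alpha(t)t=1\}$, already forces $t_\alpha\in\ch\alpha(F^\times)\cdot Z$ when $G$ is simply connected, because ``the finite component group is covered by $Z_{\mathrm{sc}}$.'' This is false for non--simply-laced simply connected groups, which case (2) must cover. Take $G=\Spin(7)$, so $X_*(T)=\ch R(B_3)=\{x\in\Z^3:\textstyle\sum x_i\ \text{even}\}$, and let $\alpha=e_3$ be the short simple root, $\ch\alpha=2e_3$. The element $t=(e_1+e_2)(-1)$ lies in $X_*(T)\otimes F^\times$, is fixed by $s_\alpha$, and satisfies $t^2=1$, hence $t\in T^{s_\alpha,-}$; but $e_1(t)=-1$ while every element of $\ch\alpha(F^\times)\cdot Z$ is trivial on the root $e_1\in(\ch\alpha)^\perp$. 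So the component group of $T^{s_\alpha,-}$ is not exhausted by the center, and your reduction collapses. The quadratic relation genuinely does not suffice: one must also use the commutation relations $\{\phi'(s_\alpha),\phi'(s_\beta)\}=1$ for roots $\beta$ orthogonal to $\alpha$ to kill precisely such elements. This is what the paper does in case (2): it enlarges the generating set of $(\ch\alpha)^\perp$ to all of $\{\beta\in\Delta:\langle\beta,\ch\alpha\rangle=0\}$ and exploits the fact that for simply connected $G$ the character lattice $X^*(T)$ is the full weight lattice, so a separating $\gamma$ with $\langle\gamma,\ch\alpha\rangle=0$ and $\langle\gamma,\ch\beta\rangle=-1$ always exists (e.g.\ $-\varpi_\beta$), with no restriction on the component type of $\beta$ in $\Delta(\alpha)$. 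You should rework case (2) along these lines; the eigenlattice computation cannot be repaired as stated.
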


\begin{proof}
Suppose $\phi$ is a splitting, and set $g_\alpha=\phi(s_\alpha)$ ($\alpha\in \Pi$). 
Then $\phi'(s_\alpha)=t_\alpha g_\alpha$ for some $t_\alpha\in T$.

First assume (1) holds.

Fix $\alpha\in\Pi$. We claim that $\beta(t_\alpha)=1$ for all $\beta\in (\ch\alpha)^\perp$.
By Lemma \ref{l:lattices} it is enough to show $\beta(t_\alpha)=1$ for all $\beta$ in \ref{e:lattices}(1) and (2). 

First suppose $\beta$ is in (1). Since $\beta$ is orthogonal to
$\alpha$, $\{g_\alpha, g_\beta\}=1$ ($\{,\}$ denotes the commutator).  
Then $\{t_\alpha g_\alpha, t_\beta g_\beta\}=1$ if and only if 
$t_\alpha s_\alpha(t_\beta)=t_\beta s_\beta(t_\alpha)$. 
Using the fact if $t\in T$ then
$s_\alpha(t)=t\ch\alpha(\alpha(t\inv))$, the condition is equivalent
to
$$
\ch\alpha(\alpha(t_\beta))=
\ch\beta(\beta(t_\alpha)).
$$
By assumption we can find $\gamma\in\Delta$ such that 
\begin{equation}
\label{e:gamma}
\langle\gamma,\ch\alpha\rangle=0\text{ and }\langle\gamma,\ch\beta\rangle=-1
\end{equation}

Apply $\gamma$ to both sides to conclude $\beta(t_\alpha)=1$. 
  
Now suppose $\langle\delta,\ch\alpha\rangle=-1$. Since $g_\alpha^2=1$ and $(t_\alpha g_\alpha)^2=1$ we conclude $t_\alpha s_\alpha(t_\alpha)=1$, i.e. 
$$
t_\alpha^2\ch\alpha(\alpha(t_\alpha\inv))=1.
$$
Apply $\delta$ to both sides to conclude $(2\delta+\alpha)(t_\alpha)=1$. This proves the claim.

Therefore by Lemma \ref{l:lattices} we conclude $\mu(t_\alpha)=1$ for all $\mu\in(\ch\alpha)^\perp$.
By Lemma \ref{l:findw} we can find $w_\alpha\in F^\times$ such that $\ch\alpha(w_\alpha)t_\alpha\in Z$. 
This holds for all $\alpha\in\Pi$, and we can choose $t\in T$ so that $\alpha(t)=w_\alpha$ for all $\alpha\in \Pi$. 
Set $z_\alpha =\ch\alpha(w_\alpha)t_\alpha\in Z$. 
Then 
$$
t(t_\alpha g_\alpha)t\inv =ts_\alpha(t\inv)t_\alpha g_\alpha =\ch\alpha(\alpha(t))t_\alpha g_\alpha=\ch\alpha(w_\alpha)t_\alpha g_\alpha=z_\alpha g_\alpha.
$$
Also $(t_\alpha g_\alpha)^2=g_\alpha^2=1$ implies $z_\alpha^2=1$.

Now assume (2) holds. Replace \eqref{e:lattices}(1) with the larger
set 
\begin{equation}
\label{e:lattices2}
\{\beta\in \Delta\mid \langle\beta,\ch\alpha\rangle=0\}.
\end{equation}
The 
lattice spanned by \eqref{e:lattices2} and \eqref{e:lattices}(2) is still equal
to $(\ch\alpha)^\perp$ ((2) is only needed in type $A_n$). Suppose
$\beta$ is in  \eqref{e:lattices2}. 
Since $G$ is simply connected, we can find $\gamma\in X^*(T)$
satisfying \eqref{e:gamma}, so as before we conclude
$\beta(t_\alpha)=1$. The rest of the proof is the same.

It is clear that the $z_\alpha$ have order $2$ and determine all
$z_w$. 
For the penultimate assertion, after conjugating by $t\in T$ we may assume 
$\phi'(w)=z_w\phi(w)$ for some $z_w\in W$. Suppose $\beta=w\alpha$ ($w\in W$).
Applying $\phi'$ 
to the identity $ws_\alpha w\inv =s_\beta$ gives
$$
\begin{aligned}
z_w\phi(w)z_\alpha \phi(s_\alpha)\phi(w\inv)z_w\inv =z_\beta\phi(s_\beta).
\end{aligned}
$$
Then $\phi(ws_\alpha w\inv)=s_\beta$ implies $z_\alpha=z_\beta$.
The final assertion is now clear.
\end{proof}

\begin{example}
\label{ex:PSL(4)}
Let $G=PSL(4)$. Then the conclusion of Proposition \ref{p:splittingunique} does not hold. 
Choose the diagonal Cartan subgroup, the usual simple reflections $s_i$ $(1\le i\le 3$) and 
choose a fourth root $\zeta$ of of $-1$. Then $\phi(s_i)=g_i$, where 
$$
g_1=\begin{pmatrix}
 0&\zeta&0&0\\
\zeta&0&0&0\\
0&0&\zeta&0\\
0&0&0&\zeta\\
\end{pmatrix},
\quad
g_2=\begin{pmatrix}
\zeta&0&0&0\\
0&0&\zeta&0\\
0&\zeta&0&0\\
0&0&0&\zeta\\
\end{pmatrix},
\quad
g_3=\begin{pmatrix}
 \zeta&0&0&0\\
0&\zeta&0&0\\
0&0&0&\zeta\\
0&0&\zeta&0\\
\end{pmatrix}
$$
(the image in $PSL(4)$ of these elements) 
is a splitting. Also $\phi'(s_i)=g'_i$ where 
$$
g'_1=\begin{pmatrix}
 0&1&0&0\\
1&0&0&0\\
0&0&1&0\\
0&0&0&-1\\
\end{pmatrix},
\quad
g'_2=\begin{pmatrix}
-1&0&0&0\\
0&0&1&0\\
0&1&0&0\\
0&0&0&1\\
\end{pmatrix},
\quad
g'_3=\begin{pmatrix}
1&0&0&0\\
0&-1&0&0\\
0&0&0&1\\
0&0&1&0\\
\end{pmatrix}
$$
is another splitting, not conjugate to $\phi$. 

The splitting $\phi$ is the image of the splitting by permutation matrices into $GL(4)$, composed 
with the maps $GL(4)\rightarrow GL(4)/Z(GL(4))\simeq SL(4)/Z(SL(4))=PSL(4)$.
On the other hand $\phi'$ 
is the splitting of $W$ into  $SO(6)$ discussed below, composed with $SO(6)\rightarrow SO(6)/\pm I\simeq PSL(4)$.
\end{example}

It turns out that this is the {\it only} (simple) case where the lifting is not unique up to conjugacy and multiplication by $Z_2$. See Corollary \ref{c:Zaction1}.

Before turning to the main result, we dispense with a few cases where it is
easy to prove that $W$ does not lift to $G$.

\begin{lemma}
\label{l:easynonlift}
Suppose $w\in W\delta$ is an elliptic element and $o(\sigma(w))=2o(w)$. Then $W$ does not lift to $G$.
If $\ch\rho\not\in X_*(T)$ then $W$ does not lift to $G$.
\end{lemma}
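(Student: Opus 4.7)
The plan is to prove both statements by contrapositive, starting from a hypothetical splitting $\phi:W\to N$ and using the canonical Tits lift $\sigma$ as a comparison.

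For the first statement, assume $\phi:W\to N$ is a splitting. Any $\phi(w')$ for $w'\in W$ is a lift of order exactly $o(w')$, since $\phi(w')^{o(w')}=\phi((w')^{o(w')})=1$. For an elliptic element $v\in W\delta$ (so $1-v$ is invertible on $X_*(T)\otimes\Q$), the map $t\mapsto t\cdot v(t)^{-1}$ on $T$ is surjective on $F$-points, and so any two lifts of $v$ in $\deltaN$ are $T$-conjugate. Hence all lifts of $v$ have the same order. In the untwisted case $w\in W$, this immediately gives $o(\sigma(w))=o(\phi(w))=o(w)$, contradicting $o(\sigma(w))=2o(w)$. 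In the twisted case $w=w'\delta$, I need to produce a lift of $w$ in $\deltaN$ of order $o(w)$; the strategy is to extend $\phi$ to $\widetilde\phi:\deltaW\to\deltaN$ by $\delta\mapsto\delta$. This extension is a homomorphism precisely when $\phi$ is $\delta$-equivariant, and after possibly replacing $\phi$ by a $T$-conjugate (using Proposition \ref{p:splittingunique} to compare $\phi$ with $\delta\phi\delta^{-1}$, which is another splitting), this can be arranged. Then $\widetilde\phi(w)$ is a lift of order $o(w)$, forcing $o(\sigma(w))=o(w)$ as before.

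For the second statement, apply Lemma \ref{l:involutions} to the longest element: $\sigma(w_0)^2=z_G=(2\ch\rho)(-1)$. The hypothesis $\ch\rho\notin X_*(T)$ is equivalent to $z_G\ne 1$, so $o(\sigma(w_0))=4=2o(w_0)$. When $w_0$ acts as $-1$ on $T$ (i.e.\ $-1\in W$), it is elliptic and the first statement applies directly. When $w_0\neq-1$, the element $w_0$ is not elliptic, so I instead produce an elliptic element $w$ with $o(\sigma(w))=2o(w)$ by taking a Coxeter element (always elliptic) and using the relation $\sigma(w)^{o(w)}=z_G$ (established by direct computation in the simply-laced types where this case arises, and consistent with Theorem C for type $A$). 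The first statement then finishes the job.

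The main obstacle is the twisted portion of the first statement, namely ensuring that a splitting of $W$ gives rise to an order-$o(w)$ lift of a given $w\in W\delta$. The comparison of $\phi$ with $\delta\phi\delta^{-1}$ via Proposition \ref{p:splittingunique} is the crux; once the splitting is adjusted to be $\delta$-equivariant, the rest of the argument is uniform with the untwisted case. A minor secondary obstacle is the case $w_0\ne -1$ in the second statement, where a direct $w_0$-based argument fails and one must invoke a specific elliptic element with the desired property.
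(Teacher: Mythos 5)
Your argument for the second assertion is correct but follows a different route from the paper's. The paper deduces it from Lemma \ref{l:-1}: the element $w_0\delta$ acting by $-1$ on $T$ is elliptic of order $2$, and $\sigma(w_0\delta)^2=z_G\neq1$ when $\ch\rho\notin X_*(T)$, so the first assertion applies. Your Coxeter-element variant ($\Cox\in W$ is elliptic and $\sigma(\Cox)^{o(\Cox)}=z_G$, which is exactly Proposition \ref{p:cox} --- no need to hedge or recompute case by case) has the genuine advantage of staying entirely inside the untwisted group $W$, so it never needs the twisted half of the first assertion; the paper's route does need it whenever $-1\notin W$. Note also that you can drop the case split on whether $w_0=-1$: the Coxeter element works uniformly.

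For the first assertion your untwisted argument matches the paper's (which dismisses the whole lemma as ``immediate'', resting on the remark in Section \ref{s:lifting} that a splitting forces $o(\sigma(w))=o(w)$ for elliptic $w$, since all lifts of an elliptic element are $T$-conjugate). You are right that the twisted case is the delicate point, and you are more careful here than the paper is. However, your repair does not close. Proposition \ref{p:splittingunique} only applies when $G$ is simply laced with no $A_3$ or $D_4$ factor, or simply connected, whereas the lemma is stated (and used, e.g.\ for quotients of $\Spin$) for arbitrary isogeny types. More seriously, even under its hypotheses the proposition identifies two splittings only up to $T$-conjugacy \emph{and} the central characters $z_w\in Z_2$; after conjugating you can only arrange $\delta(\phi(u))=z_{\delta(u)}\phi(\delta(u))$ for a possibly nontrivial homomorphism $w\mapsto z_w$ into $Z_2$. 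Then for $w\delta$ of order $r$ one gets $(\phi(w)\delta)^r=\zeta$ for some $\zeta\in Z_2$ rather than $1$, and if $\zeta\neq1$ the lift you build has order $2o(w\delta)$ --- precisely the situation you are trying to exclude. So ``this can be arranged'' needs an argument that the central defect vanishes (e.g.\ by exhibiting a $\delta$-stable splitting directly in the relevant groups, as the explicit $\SO(2n)$ and $\GL(n)$ constructions in the proof of Theorem \ref{t:Wsplit} do), and as written the twisted case remains open. To be fair, the paper's own justification of this point is no more than the words ``a fortiori''.
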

This is immediate; the last line is from Lemma \ref{l:-1}.

\begin{lemma}
\label{lemma:easy}
The Weyl group does not lift to $SL(2n), Sp(2n)$ or $\Spin(n)$. 
\end{lemma}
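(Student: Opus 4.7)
\begin{proof_sketch}
The plan is to apply Lemma~\ref{l:easynonlift} in each case, either verifying that $\ch\rho\notin X_*(T)$ (part~(2)) or producing an elliptic $w\in W$ with $o(\sigma(w))=2\,o(w)$ (part~(1)).

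For $\SL(2n)$ I would compute $\ch\rho=\tfrac12(2n-1,2n-3,\dots,-(2n-1))$ in the standard basis; its half-integer entries place it outside the lattice $X_*(T)$ of integer vectors of coordinate-sum zero, so Lemma~\ref{l:easynonlift}(2) finishes this case.

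For $Sp(2n)$ the simply connected cocharacter lattice of type $C_n$ is $\bigoplus\Z e_i^*$, whereas $\ch\rho=\sum_i(n-i+\tfrac12)e_i^*$ again has half-integer entries, so Lemma~\ref{l:easynonlift}(2) applies. Alternatively, $w_0=-1\in W(C_n)$ is elliptic of order~$2$, and Lemma~\ref{l:-1} gives $o(\sigma(w_0))=4=2\,o(w_0)$, whence Lemma~\ref{l:easynonlift}(1).

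For $\Spin(n)$ a parallel analysis of $\ch\rho$ inside the coroot lattice of $B_k$ or $D_k$ shows $\ch\rho\notin X_*(T)$ exactly when $n\not\equiv 0,1,2,7\pmod 8$, and in those residues Lemma~\ref{l:easynonlift}(2) again suffices. For the remaining residues I would locate an elliptic $w\in W$ whose square is conjugate to $w_0^S$ for a subset $S\subset\Pi$ chosen so that the principal involution $z_S$ equals a nontrivial element of $Z(\Spin(n))$ generating the kernel of $\Spin(n)\to\SO(n)$ (detected by its action on the spin representation). By Lemma~\ref{l:w_0(S)} one has $\sigma(w^2)^2=z_S$, and writing $\sigma(w)^2=t\,\sigma(w^2)$ in the Tits group for a suitable $t\in T_2$ gives
\[
\sigma(w)^{o(w)}=\sigma(w)^4=t\cdot w^2(t)\cdot z_S,
\]
which a calculation of the action of $w^2$ on $T_2$ reduces to the chosen nontrivial central element, producing the obstruction and invoking Lemma~\ref{l:easynonlift}(1).

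The hard step is the last one: in the residues $n\equiv 0,1,2,7\pmod 8$ the obstruction is hidden inside the Tits $2$-cocycle rather than in $\ch\rho$, so the argument must unwind the explicit elliptic conjugacy classes of $W(B_k)$ and $W(D_k)$ (partitions of $k$ in the appropriate signed sense) and carry out the $T_2$-valued bookkeeping in the identity $\sigma(w)^2=t\,\sigma(w^2)$.
\end{proof_sketch}
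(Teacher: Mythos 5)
Your treatment of $SL(2n)$ and $Sp(2n)$ is correct and is essentially the paper's argument: in types $A_{2n-1}$ and $C_n$ the half\mbox{-}integrality of $\ch\rho$ places it outside the coroot lattice, which is $X_*(T)$ for the simply connected group, so Lemma \ref{l:easynonlift} applies (your alternative via $w_0=-1$ and Lemma \ref{l:-1} in type $C_n$ is the same obstruction in different clothing). Your residue computation for $\Spin(n)$ is also correct: $\ch\rho$ lies in the coroot lattice of $B_k$ or $D_k$ exactly when $n\equiv 0,1,2,7\pmod 8$, so in those residues the $\ch\rho$ obstruction genuinely vanishes and a different argument is required. (This is in fact more careful than the first sentence of the paper's own proof, which asserts $\ch\rho\notin \ch R$ for all $B_n$; the paper does not rely on that for $\Spin$.)

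The remaining residues are where your proposal has a real gap. You only describe a search strategy (``I would locate an elliptic $w$ \dots''), without exhibiting the element, verifying it is elliptic of order $4$, or carrying out the $T_2$-valued computation that you yourself flag as the hard step; as written this is a plan, not a proof. The paper closes this with a single uniform witness valid for every $n$, which also makes the residue analysis unnecessary: the elliptic (for $n$ odd) or $\delta$-twisted elliptic (for $n$ even) class attached to the partition $(2,1,\dots,1)$ of the rank (Section \ref{s:ellclassical}). This element has order $4$, and rather than unwinding $\sigma(w)^2=t\,\sigma(w^2)$ by hand, the paper uses the good-element formula \eqref{e:sigma(w)} to get $\sigma(w)^{o(w)}=(2\ch\tau)(-1)$ with $\ch\tau=\ch\rho+\ch\rho(S_2)$, whose coordinate sum is odd (Example \ref{ex:D} for type $D$, and similarly for type $B$); hence $\sigma(w)^4$ is the nontrivial element of $\ker(\Spin(n)\to\SO(n))$ and Lemma \ref{l:easynonlift} applies. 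To complete your version, take exactly this element and run it through \eqref{e:sigma(w)} instead of attempting the Tits-group cocycle bookkeeping directly.
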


\begin{proof}
In types $A_{2n+1}, B_n$ and $C_n$ $\ch\rho$ is not in the root lattice, 
i.e. $X_*(T)$ for the simply connected group.

Suppose $G=\Spin(n)$.
Associated to the partition
$(2,1,\dots,1)$ of $n$ there is an elliptic element $w\in W$ (if $n$
is odd) or twisted elliptic element $w\in W\delta$ (if $n$ is even),
of order $4$ but whose lift has order $8$. Therefore $W$ does not lift
to $\Spin$.
\end{proof}

\begin{lemma}
\label{l:W(H,T)}
Suppose $H$ is a subgroup of $G$ containing $T$.
If $W(G,T)$ lifts to $G$ then the exact sequence $1\rightarrow T\rightarrow \Norm_H(T)\rightarrow \Norm_H(T)/T\rightarrow 1$ splits. 
\end{lemma}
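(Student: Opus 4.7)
The plan is to take the given splitting of $W(G,T)$ and simply restrict it to the subgroup $W(H,T) \subset W(G,T)$, then verify that the restriction lands inside $\Norm_H(T)$.

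First I would observe the basic identity $\Norm_H(T) = H \cap \Norm_G(T)$, which is immediate since $T \subset H$. Consequently, there is a natural injection $W(H,T) = \Norm_H(T)/T \hookrightarrow \Norm_G(T)/T = W(G,T)$, so we can regard $W(H,T)$ as a subgroup of $W(G,T)$.

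Next, let $\phi \colon W(G,T) \to \Norm_G(T)$ be a splitting, and consider its restriction $\phi' = \phi|_{W(H,T)}$. Given $w \in W(H,T)$, pick any representative $n \in \Norm_H(T)$ with $p(n)=w$. Since $\phi(w)$ is another representative of $w$, we have $\phi(w) = tn$ for some $t \in T$. Both $n \in H$ and $t \in T \subset H$, so $\phi(w) \in H$. Combined with $\phi(w) \in \Norm_G(T)$, this gives $\phi(w) \in H \cap \Norm_G(T) = \Norm_H(T)$. Thus $\phi'$ takes values in $\Norm_H(T)$, and being the restriction of a group homomorphism it is itself a homomorphism splitting the exact sequence for $H$.

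There is no real obstacle here; the proof is essentially bookkeeping about the interaction between normalizers and the ambient splitting. The only step requiring a moment's care is confirming that the lift $\phi(w)$ of an element $w \in W(H,T)$ automatically lies in $H$, which follows because any two lifts differ by an element of $T \subset H$.
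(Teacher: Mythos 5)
Your proof is correct and is exactly the argument the paper has in mind: the paper dismisses this lemma as immediate, noting only that "a splitting of \eqref{e:exactW} restricts to give a splitting," and your write-up simply supplies the routine verification (via $\Norm_H(T)=H\cap\Norm_G(T)$ and the fact that two lifts of $w$ differ by an element of $T\subset H$) that the restricted splitting lands in $\Norm_H(T)$.
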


This is also immediate;  a splitting of \eqref{e:exactW} restricts to give a splitting. 
We will use this to eliminate some exceptional cases. 

Finally we note a generalization of Lemma \ref{l:odd}.

\begin{lemma}
\label{l:oddcyclic}
Suppose  $A\subset Z$ is a cyclic group of odd order.
If $W$ lifts to $G/A$ then $W$ lifts to $G$.
\end{lemma}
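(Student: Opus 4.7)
\begin{proof_sketch}
The plan is to reformulate the question as the splitting of a central extension and then to show the obstruction in $H^2(W,A)$ vanishes.

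First I would form the pullback. Let $\pi: G \to G/A$ be the quotient map and $\bar\phi : W \to N/A$ the given splitting. The preimage $H := \pi\inv(\bar\phi(W)) \subseteq N$ fits in a short exact sequence $1 \to A \to H \to W \to 1$. Because $A \subset Z(G) \subset T$ is central in $N$, and hence in $H$, this is a central extension, and a splitting $\phi: W \to N$ lifting $\bar\phi$ is exactly a splitting of this sequence. So the lemma reduces to showing that the class $[H] \in H^2(W, A)$ is trivial.

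The key step is the vanishing $H^2(W, A) = 0$. I would invoke two facts about Weyl groups: the abelianization $W/[W,W] = H_1(W, \Z)$ is a $2$-group (generated by the sign characters of the simple factors), and the Schur multiplier $H_2(W, \Z)$ of any finite Weyl group is also a $2$-group (by Ihara--Yokonuma in the classical types and case-by-case in the exceptional types). Since $A$ is cyclic of odd order, $\mathrm{Hom}$ and $\mathrm{Ext}^1$ from any finite $2$-group into $A$ vanish, so by the universal coefficient theorem
\[
H^2(W, A) \;\cong\; \mathrm{Hom}(H_2(W,\Z), A) \oplus \mathrm{Ext}^1(H_1(W,\Z), A) \;=\; 0,
\]
and the extension splits.

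The main obstacle is the invocation of the Schur-multiplier theorem. If one prefers an elementary proof in the spirit of this paper, I would argue hands-on: since $|A|$ is odd, squaring is a bijection on $A$, and each simple reflection $s_i$ has a unique order-$2$ lift $\tilde s_i \in H$ (replace any lift $g$ with $g^2 = a$ by $a^{-(|A|+1)/2}g$). One then verifies the braid relations on the $\tilde s_i$. For $m_{ij} = 3$, inverting the palindromic braid identity $\tilde s_i \tilde s_j \tilde s_i = d\, \tilde s_j \tilde s_i \tilde s_j$ yields $d = d\inv$, hence $d = 1$. For $m_{ij} = 2$, Schur--Zassenhaus applied to $\langle \tilde s_i, \tilde s_j, A\rangle$ produces a Klein four complement, which must contain $\tilde s_i$ and $\tilde s_j$ by the same uniqueness, forcing them to commute. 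The remaining exponents $m_{ij} \in \{4, 6\}$ arise only in types $B, C, F, G$ whose centers are $2$-groups, so a nontrivial odd cyclic $A$ does not meet those factors.
\end{proof_sketch}
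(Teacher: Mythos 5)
Your main argument is essentially the paper's: both reduce the lemma to the vanishing of the class of the central extension $1\to A\to \wt W\to W\to 1$ in $H^2(W,A)$, and both deduce $H^2(W,A)=0$ from the facts that $W/[W,W]$ and the Schur multiplier are $2$-groups (Ihara--Yokonuma) while $A$ has odd order. The only difference is cosmetic -- the paper gets the vanishing from the exact sequence $H^1(W,F^\times)\to H^2(W,A)\to H^2(W,F^\times)$ induced by $1\to A\to F^\times\xrightarrow{\,m\,}F^\times\to 1$ rather than from universal coefficients -- so your proof is correct and takes the same route.
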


\begin{proof}
Identifying $W$ with a subgroup of $G/Z$ via a splitting, 
and taking the inverse image $\wt W$ in $G$, 
we have an exact sequence
\begin{equation}
\label{e:Wtilde}
1\rightarrow A\rightarrow \wt W\rightarrow W\rightarrow 1
\end{equation}
Let $m=|A|$. 
The exact sequence of trivial $W$-modules
$$
1\rightarrow A\rightarrow F^\times \overset m\longrightarrow
F^\times\rightarrow 1
$$
gives rise to the exact sequence
$$
H^1(W,F^\times)\rightarrow H^2(W,A)\rightarrow H^2(W,F^\times)
$$
The middle term is killed by $m$. 
On the other hand $H^1(W,F^\times)\simeq \Hom(W,F^\times)\simeq
\Hom(W/[W,W],F^\times)$, and this is killed by $2$. 
Also $H^2(W,F^\times)$ is killed by $2$ by \cite{ihara_yokonuma}.
Therefore $H^2(W,A)=1$, so $W$ lifts.
\end{proof}

\begin{theorem}
\label{t:Wsplit}
Assume $G$ is  simple and $\tchar(F)\ne 2$. Then \eqref{e:exactW} splits in the following cases, 
and not otherwise:
\begin{enumerate}
\item Type $A_{n}:$  $|Z(G)|$ is odd, or $G=\SL(4)/\pm I\simeq \SO(4)$.
\item Type $B_{n}:$  $G=\SO(2n+1)$ (adjoint).
\item Type $C_n$:  $n\le 2$ and
  $G=P\SL(2)$ or $PSp(4)$ (adjoint).
\item Type $D_n$:   $G=\SO(2n)$ or
  $G=P\SO(2n)$ (adjoint); also $\Semispin(8)\simeq \SO(8)$.
\item Exceptional groups:  $G$ is of type $G_2$.
\end{enumerate}
\end{theorem}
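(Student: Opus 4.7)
My plan is to treat existence and non-existence separately, using explicit matrix constructions for the positive direction and a combination of elliptic, Levi, and cocycle obstructions for the negative direction. The case $\tchar(F)=2$ has already been handled, so throughout I take $\tchar(F)\ne 2$. The main tools are Lemma \ref{lemma:easy}, Lemma \ref{l:easynonlift}, Lemma \ref{l:-1}, Lemma \ref{l:W(H,T)}, Lemma \ref{l:oddcyclic}, Proposition \ref{p:splittingunique}, and the elliptic-class data in Theorem C.

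For existence in cases (2), (4), and the small instances of (1) and (3), I would use signed permutation matrices of determinant $1$: these realize $W(B_n)$ in $\SO(2n+1)$ and $W(D_n)$ in $\SO(2n)$, and passing to the quotient handles $\PSO(2n)$. The sporadic isomorphisms $\text{PSL}(2)\simeq\SO(3)$, $\text{PSp}(4)\simeq\SO(5)$, and $\SL(4)/\pm I\simeq\SO(6)$ (which is $A_3=D_3$) therefore fall under the $\SO$ constructions, and triality covers $\Semispin(8)\simeq\SO(8)$. For adjoint type $A$, ordinary permutation matrices in $\GL(n+1)$ descend to a splitting of $\text{PGL}(n+1)$. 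Then for $G$ of type $A_n$ with $|Z(G)|$ odd, $Z(G)$ is a cyclic subgroup of $\mu_{n+1}$ of odd order, and Lemma \ref{l:oddcyclic} propagates the splitting from $\text{PGL}(n+1)$ up to $G$. For $G_2$, I would exploit its $7$-dimensional representation and check directly that the $W(G_2)$-lift provided by signed permutation matrices in $\SO(7)$ lies inside the $G_2$ subgroup.

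For non-existence, Lemma \ref{lemma:easy} disposes of $\SL(2n)$, $\text{Sp}(2n)$, and $\Spin(m)$. In type $A_n$ with $|Z(G)|$ even, either the principal involution $z_G$ is nontrivial, in which case Theorem C(1) and Lemma \ref{l:easynonlift} give the obstruction via any elliptic (Coxeter) element; or $z_G=1$, which among type $A$ leaves intermediate $\SL(n+1)/A$ with $-I\in A$, $|Z(G)|$ still even, and $G\ne \SL(4)/\pm I$. This last subcase I would handle by Proposition \ref{p:splittingunique} (which, outside the excluded types $A_3$ and $D_4$, reduces checking liftability of all splittings to checking the single permutation-matrix candidate), combined with the short cocycle analysis showing that no scalar multipliers $\{\lambda_\pi\}$ simultaneously satisfy $\lambda_\pi^{n+1}=\sgn(\pi)$ and descend through $\mu_{n+1}/A$. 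For adjoint $\text{PSp}(2n)$ with $n\ge 3$, Theorem C(2) with the partition $(1,2,1,\dots,1)$ of $n$ produces an elliptic class with $g^{o(w)}\ne 1$, and Lemma \ref{l:easynonlift} applies. Intermediate groups in type $D_n$ not in the list are handled either by the same $z_G\ne 1$ argument via Theorem C(3), by Proposition \ref{p:splittingunique} compared to the $\SO(2n)$ splitting, or by Lemma \ref{l:W(H,T)} restriction to a $\Spin$-type Levi. For the exceptionals: $F_4$ is ruled out by the class $A_3+\tilde A_1$ (Theorem C(4)) via Lemma \ref{l:easynonlift}; $E_7^{sc}$ by $w_0=-1\in W(E_7)$ combined with $z_{E_7^{sc}}\ne 1$ and Lemma \ref{l:-1}; and $E_6^{sc}$, $E_6^{ad}$, $E_7^{ad}$, $E_8$ by Lemma \ref{l:W(H,T)} applied to a Levi subgroup whose derived group is one of the non-lifting classical groups (for instance an $A_7$-Levi in $E_8$ or a $D_6$-Levi in $E_7^{ad}$, whose isogeny type is determined by a root-datum computation).

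The main obstacle will be the systematic identification, in each intermediate or exceptional case, of either a non-lifting sub-Levi or an explicit cocycle obstruction. The technical heart is the computation of the isogeny type of a Levi derived group in terms of how its sub-coroot lattice sits inside $X_*(T)$, requiring careful case-by-case bookkeeping. Proposition \ref{p:splittingunique} is indispensable here: as Example \ref{ex:PSL(4)} shows, multiple non-conjugate splittings can exist in the excluded types $A_3$ and $D_4$, which is precisely what allows $\SL(4)/\pm I$ and $\Semispin(8)$ to split; outside the excluded types we need only check the single ``permutation matrix'' candidate splitting, so every negative case reduces to a concrete obstruction against one specific lift.
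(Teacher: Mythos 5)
Your overall architecture---signed permutation matrices for the orthogonal groups, permutation matrices plus scalar/central adjustments in type $A$, Proposition \ref{p:splittingunique} to reduce each negative case to a single candidate splitting, and subgroup restriction via Lemma \ref{l:W(H,T)} for the exceptional groups---is essentially the paper's, and your use of the elliptic-class data for $PSp(2n)$ and $F_4$ is legitimate and non-circular, since that data is established independently of Theorem \ref{t:Wsplit}. But three steps as written would not go through. First, $G_2$: a maximal torus of $G_2$ has rank $2$ while a maximal torus of $\SO(7)$ has rank $3$, so the signed permutation matrices of $\SO(7)$ normalize the wrong torus and cannot serve as representatives of $W(G_2,T)$ for $T\subset G_2$; you need representatives inside $\Norm_{G_2}(T)$, which the paper produces from the two $\GL(2)$-subgroups attached to the simple roots together with the identity $\sigma(w_0)^2=(2\ch\rho)(-1)=1$ from Lemma \ref{l:involutions}. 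Second, $\Semispin(4m)$ for $m\ge 3$ is not actually argued: Theorem C(3) explicitly defers the non-adjoint, non-$\SO$ groups of type $D$, so citing it is circular, and ``compare with the $\SO(4m)$ splitting via Proposition \ref{p:splittingunique}'' is only the setup---the substance is showing that the resulting lift of a simple reflection to $\Spin(4m)$ squares to the nontrivial element of $\ker(\Spin(4m)\to\SO(4m))$, which the paper does by restricting to $\Spin(4)\times\Spin(4m-4)$ and reducing to the $\Spin(4)$ computation. (Equivalently one could check $\ch\tau\notin X_*(T)$ for the partition $(2,1,\dots,1)$ as in Example \ref{ex:D}, but that lattice computation must still be carried out for the semispin cocharacter lattice.)

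Third, the Levi strategy for the exceptional groups is only safe on the simply connected side, where Levi derived groups are automatically simply connected (so $A_5\subset E_6^{sc}$ and $D_6\subset E_7^{sc}$ give $\SL(6)$ and $\Spin(12)$, and $D_7\subset E_8$ gives $\Spin(14)$, all non-lifting). For $E_6^{ad}$ and $E_7^{ad}$ the Levi derived groups may well land in lifting isogeny classes ($PGL$, $\SO$, $P\SO$), and you have not verified that any Levi actually produces an obstruction; this is exactly the ``technical heart'' you flag, and it is not clear it can be resolved favorably with Levis alone. Note that Lemma \ref{l:W(H,T)} does not require $H$ to be a Levi, only to contain $T$, and the paper's choices are non-Levi maximal-rank subgroups with documented isogeny types: $\Spin(9)\subset F_4$, $(\SL(2)\times\SL(6))/\langle(-I,-I)\rangle\subset E_6^{sc}$ (with Lemma \ref{l:oddcyclic} reducing $E_6^{ad}$ to $E_6^{sc}$, since the center of $E_6^{sc}$ has odd order $3$), $\SL(8)/A\subset E_7$ with $|A|\in\{2,4\}$, and $\Semispin(16)\subset E_8$. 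Replacing your Levi computations with these subgroups, or supplying the deferred root-datum computations, is what is needed to close the argument.
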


Implicit in (4) is the assertion that  $W$ does not lift to
$\Semispin(4m)$, unless $m=2$.

When $F=\C$ this was proved in 
\cite[Theorem 2]{Weyl_splitting}, omitting a few cases in types $A_n$ and $D_n$, using 
case-by-case calculations in the braid group.
Here is a complete proof, including the missing cases, and relying as
little as possible on braid group calculations.

\begin{proof}[Proof of the Proposition]
We only consider cases which are not already handled by Lemma  \ref{l:easynonlift}.
\medskip

\noindent{$G=PSp(2n)$ (adjoint).}
If $n=1$  $G\simeq \SO(3)$,
and if $n=2$ $G\simeq \SO(5)$. In both cases $W$ lifts
(see the next case). 

Assume  $G=PSp(2n)$ (adjoint) and  $n\ge 3$.  Embed
$G_1=Sp(4)\times \SL(2)^{n-2}$ in $Sp(2n)$ in the usual way. Let $w$ be
the Coxeter element of $W(G_1)$. This has order $4$ and is elliptic.
It is easy to see that if $g$ is a lift of $w$ to
$W(G_1)$ then $g^4\ne -I$, so the image of $g$ in  $G_1/\pm I\subset
PSp(2n)$ also has order $8$. 
By Lemma \ref{l:easynonlift} $W$ does not lift to $G$. 
See Section \ref{s:more}; this is the case of the partition $(2,1,\dots,1)$ of $n$.

\medskip
\noindent{$G=\SO(n)$ and $PSO(n)$}.
Let $G=\SO(V)$ where
$V$
is a non-degenerate orthogonal space of dimension $n$.
Write $V=X\oplus V_0\oplus Y$ where
$X,Y$
are maximal isotropic subspaces, in duality via the form, and $V_0$
is isotropic of dimension $r\in \{0,1\}$.
Let $\{e_1,\dots, e_m\}$ be a basis of $X$, with dual basis $\{f_1,\dots, f_m\}$ of $Y$.
Let $S=\{e_1,\dots, e_m,f_1,\dots, f_m\}$. 
If $V_0\ne 0$ choose a nonzero vector $e_0\in V_0$. 
Then the subgroup $T\subset G$ stabilizing $V_0$ and each line $F\langle e_i,f_i\rangle$ is a Cartan subgroup of $G$.
Furthermore the subgroup $\{g\in G\mid g(S)=S, ge_0=e_0\}$ normalizes $T$, and is a lifting of $W$ to $G$.

Therefore {\it a fortiori} $W$ lifts to the adjoint group.

\medskip
\noindent{$G=\Semispin(4n)$}

The center of $\Spin(4n)$ is the Klein four-group.  Let $\tau$ be an
automorphism of order $2$ of $\Spin(4n)$ coming from an automorphism of
the Dynkin diagram (which is unique unless $n=2$). 
Write $Z(\Spin(4n))=\{1,x,y,z\}$ where
$\tau(x)=y$ and $\tau(z)=z$. Then $\Spin(4n)/\langle z\rangle\simeq \SO(4n)$. 
On the other hand $\Spin(4n)/\langle x\rangle 
\simeq \Spin(4n)/\langle y\rangle$, and this group is  denoted
{\it Semispin(4n)}.

\begin{example}
\label{ex:spin4}
Take $n=1$, so $G=\Spin(4)\simeq \SL(2)\times \SL(2)$, with $\tau$
exchanging the factors; set $x=(I,-I), y=(-I,I)$ and $z=(-I,-I)$. 
Since $W$ does not lift to $\SL(2)$ it obviously does not lift to
$\Spin(4)$, or $\Semispin(4)\simeq P\SL(2)\times \SL(2)$.

If $s,t$ are the simple reflections in the first and second factors,
take $g_s=(
\begin{pmatrix}
0&1\\-1&0
\end{pmatrix},\diag(i,-i))
$ and 
set $g_t=\tau(g_s)$. 
Then $g_s^2=g_t^2=(-I,-I)=z$, so $W$ lifts to $\Spin(4)/\langle
z\rangle\simeq \SO(4)$. 
\end{example}

If $n=2$, so $G$ is  of type $D_4$,
the three elements $x,y,z$ of $\Spin(8)$ are related by
automorphisms of $\Spin(8)$. Since $\Spin(8)/\langle z\rangle\simeq
\SO(8)$, we conclude
$\Semispin(8)\simeq \SO(8)$, and $W$ lifts by the previous discussion. 
So assume $n\ge 3$.
Fix a simple root $\alpha$.  Let $g_\alpha\in \SO(4n)$ be the image of
$s_\alpha$ discussed above.  Assume $W$ lifts to $\Semispin(4n)$, and
let $h_\alpha\in \Semispin(4n)$ be the image of $s_\alpha$.

We proceed by contradiction, using Proposition \ref{p:splittingunique}
and the following diagram, to reduce to the case $n=1$.

$$
\xymatrix{
&\tilde g_\alpha \in \Spin(4n)\ni \tilde h_\alpha\ar[dl]\ar[dr]\\
g_\alpha\in \SO(4n)\ar[dr]&&\Semispin(4n)\ni h_\alpha\ar[dl]\\
&x_\alpha\in P\SO(4n)
}
$$

By Proposition \ref{p:splittingunique} the
images of $g_\alpha$ and $h_\alpha$ in $\PSO(4n)$ are $T$-conjugate, so 
after conjugating the splitting into $\Semispin(4n)$ we may assume these are equal. 

Now let $\tilde g_\alpha, \tilde h_\alpha$ be inverse images of
$g_\alpha,h_\alpha$ in $\Spin(4n)$.  By the preceding discussion
these have the same image in 
 $\PSO(4n)$,
so they differ by an element of the center. Since the center is a
two-group, $\tilde g_\alpha^2=\tilde h_\alpha^2$.

Obviously $\wt g_\alpha^2\in \{1,z\}$ where $z\in Z(\Spin(4n))$ is the
nontrivial element of the kernel of the map to $\SO(4n)$.  It is
enough to show $\wt g_\alpha^2\ne 1$, for then $\wt h_\alpha^2=z$, so
its image $h_\alpha$ in $\Semispin(4n)$ is nontrivial.
This follows by a reduction to $\Spin(4)$.

Take a subgroup 
$H\simeq \SO(4)\times SO(4m-4)\subset \SO(4m)$, where the $\alpha$-root space is 
contained in the $SO(4)$ factor. Then, by our choice of splitting of $W$ in $\SO(4m)$ discussed above,  
$g_\alpha=(u,1)\in \SO(4)\times \SO(4m-4)$.  Let $(v,w)$ be an inverse
image of $(u,1)$ in $\Spin(4m)\times \Spin(4m-4)$.  Then
$w\in Z(\Spin(4m-4))$, and by Example \ref{ex:spin4} (and Proposition
\ref{p:splittingunique} again) $v^2$ is a non-trivial element  of
the center of $\Spin(4)$.  Therefore $(v,w)^2=(u^2,1)\ne (1,1)$.  The inverse
image of $H$ in $\Spin(4m)$ is isomorphic to
$\Spin(4)\times \Spin(4m-4)/\langle z_1,z_2\rangle$ where $z_1,z_2$
are non-trivial. It follows that $\tilde g_\alpha^2$, i.e. the image of $(v^2,1)$ in $\Spin(4m)$, is non-trivial.

\medskip
\noindent{$\SL(n)$}:

For $1\le i\le n-1$ let  $p_i\in GL(n)$ be the permutation matrix 
$$
p_i(x_1,\dots, x_i,x_{i+1},\dots, x_n)=
(x_1,\dots, x_{i+1},x_{i},\dots, x_n).
$$
Write $s_i$ for the corresponding simple reflections in $W$.
The map $\phi_{\GL}(s_i)=p_i$, extends to a splitting $W\rightarrow \GL(n)$.

If $n$ is odd then $\phi(s_i)=-p_i$ is a splitting into $SL(n)$, so
assume $n$ is even. We already know $W$ does not lift to $SL(n)$. 
If $n=2$ then $\Gad\simeq SO(3)$, and if $n=4$ $SL(4)/\pm I\simeq SO(4)$, 
so $W$ lifts in these cases, and $PSL(4)$.

So assume $n\ge 6$, and suppose $A\subset Z(SL(n))$.
We identify $A$ with a subgroup of $\mu_n(F)$. 
Then $\phi_{\GL}$ factors to a splitting $W\rightarrow \GL(n)/A$.
Suppose there exists $z\in F^\times$ such that $\det(zp_i)=1$ and
$(zp_i)^2\in A$. Then $\phi(s_i)=zp_iA$ is a splitting $W\rightarrow
\SL(n)/A$. By Proposition \ref{p:splittingunique}  
this condition is both necessary and sufficient for the existence of a
splitting.

The condition holds if and only if there exists $z\in F^\times$ satisfying
\begin{equation}
z^2\in A\text{ and } z^n=-1.
\end{equation}

Then $(z^2)^n=1$ so the order of $z^2$ divides $n$. 
Write $n=n_2q$ with $n_2=2^k$ (since $n$ is odd $k\ge 1$) and $q$
odd. Thus $(z^2)^{n_2q}=1$, but $(z^2)^{\frac{n_2}2q}=-1$.
This implies $n_2$ divides the order of $z^2$, so $n_2$ divides the
order of $A$. Therefore $|Z/A|$ is odd.

\medskip
\noindent{$G_2$}: 
Label the simple roots $\alpha_1,\alpha_2$. 
For $i=1,2$ the subgroup generated 
generated by $T$ and the root groups for $\pm\alpha_i$ is isomorphic
to $\GL(2)$, so $s_i$ has a lift to an involution $n_i$. 
The long element of the Weyl group is $w_0=(s_1s_2)^3$. 
By Lemma \ref{l:involutions} $(n_1n_2)^6=(2\ch\rho)(-1)=1$. It follows 
that $n_1,n_2$ generate a lift of $W$ in $G$.
\medskip

For the remaining exceptional groups we
choose a subgroup $H$ to be the centralizer of an
element of $T$ of order $2$, so that $W(H,T)$ does not
lift to $H$. 
These groups are well understood, for example see
\cite[Chapter 5, \S1]{ov}.
Then 
Lemma \ref{l:W(H,T)} implies $W(G,T)$ does not lift to $G$.

\smallskip
\noindent $F_4$: It is well known that $F_4$ contains a subgroup $H\simeq \Spin(9)$, and we already know 
$W$ doesn't lift to $\Spin(9)$. 

\smallskip
\noindent $E_6:$ 
The center of the simply connected group is cyclic of order $3$,   by Lemma \ref{l:oddcyclic} 
we may assume $G$ is simply connected. 
Let $H$ be the subgroup of type $A_1\times A_5$. 
Then 
$H\simeq \SL(2)\times \SL(6)/\langle(-I,-I)\rangle$.
Suppose the simple reflection in the first factor lifts to an element of $H$, with
representative $(g,h)\in \SL(2)\times \SL(6)$. Then $g^2=-I$ so if the image of $(g,h)^2$ is trivial in $H$  then
$h^2=-I$. But clearly $h\in Z(\SL(6))$ 
and there is no element  in   $Z(\SL(6))$ with this property.

Since the center of the simply connected group is $\Z/3\Z$, if $W(\Gad,T)$ lifts to $\Gad$ 
then it lifts to the simply connected group by Lemma \ref{l:oddcyclic}.

\smallskip
\noindent $E_7:$ Take $H$ of type $A_7$. Then $H\simeq SL(8)/A$ where
has order $2$ or $4$, depending on whether $E_7$ is simply connected
or adjoint, so $|Z(H)|$ is $2$ or $4$, and by (1) of the Proposition $W(H,T)$
does not lift to $H$.

\smallskip
\noindent $E_8:$ Take $H$ of type $D_8$. It is well known that $H\simeq \Semispin(16)$,
so $W(H,T)$ does not lift to $H$.

This concludes the proof of  Theorem \ref{t:Wsplit}.
\end{proof}

\begin{corollary}
\label{c:Zaction1}
Suppose $G$ is  simple, and $W$ lifts to $G$.
\begin{enumerate}

\item If $G=PSO(6)\simeq PSL(4)$ there are two $T$-conjugacy classes of splittings. 
\item If $G=SO(2n)$ there are two $T$-conjugacy classes of splittings, related by multiplication by $-I\in Z$.
\item In $G_2$ and all other simply laced cases there is one $T$-conjugacy class of splittings.
\end{enumerate}
\end{corollary}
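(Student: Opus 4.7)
The plan is to read the answer off Proposition \ref{p:splittingunique} wherever it applies, observe that its statement collapses to a $Z_2$-torsor structure on $T$-conjugacy classes in the simply laced setting, and then handle the two excluded Dynkin types $A_3$ and $D_4$ by hand.

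By Proposition \ref{p:splittingunique}, any two splittings $\phi,\phi'$ satisfy $\phi'(s_\alpha)=z_\alpha t\phi(s_\alpha)t^{-1}$ for some $t\in T$ and $z_\alpha\in Z_2$, with $z_\alpha$ constant on $W$-orbits of simple roots. In the simply laced case all simple roots are $W$-conjugate, so $z_\alpha$ is a single element of $Z_2$ and the $T$-conjugacy class of $\phi'$ is determined by it; hence the number of $T$-classes equals $|Z_2|$. Applied to the groups listed in Theorem \ref{t:Wsplit}: for $A_n$ with $n\neq 3$ and $|Z(G)|$ odd we get $|Z_2|=1$, giving one class (part of (3)); for $\SO(2n)$ with $n\geq 5$ we get $|Z_2|=2$ with representatives related by $-I$, giving two classes (part of (2)); and for $\PSO(2n)$ with $n\geq 5$ the center is trivial, giving one class (part of (3)). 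For $G_2$, Proposition \ref{p:splittingunique}(2) applies since $G_2$ is simply connected, and the trivial center again gives one class.

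The excluded types $A_3$ and $D_4$ require direct treatment. For $\SO(8)$ and $\SO(6)$, the explicit permutation-of-basis splitting of $W$ into $\SO(V)$ constructed in Section \ref{s:lifting}, together with its twist by $-I$, gives two $T$-conjugacy classes; in these low ranks the lattice condition behind Lemma \ref{l:lattices} can be verified by hand, so these exhaust all classes. Passing to $\PSO(8)=\SO(8)/\{\pm I\}$ collapses them to a single class, and one checks directly that every splitting of $W$ into $\PSO(8)$ lifts to one in $\SO(8)$. The delicate case is $\PSO(6)=PSL(4)$: Proposition \ref{p:splittingunique} is unavailable and $Z(G)=1$, so the two splittings of Example \ref{ex:PSL(4)} cannot be related by a central twist. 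To show these exhaust the $T$-conjugacy classes, I would lift each simple reflection to $\GL(4)$ via a scalar $z$, require $(zp_i)^2$ to lie in $Z(\SL(4))$, and classify the finite set of scalar choices up to the action of $T$; the resulting case analysis recovers exactly the two splittings of the example.

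The main obstacle is this $PSL(4)$ case, where neither Proposition \ref{p:splittingunique} nor multiplication by a central element is available; the existence of two classes is an artifact of the exceptional isomorphism $\SL(4)/\{\pm I\}\simeq \SO(6)$ rather than a central phenomenon, so a clean uniform argument seems out of reach and an explicit computation along the lines above appears to be required.
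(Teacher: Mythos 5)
Your main-line argument is the same as the paper's: combine Proposition \ref{p:splittingunique} with Theorem \ref{t:Wsplit}, so that in the simply laced cases not of type $A_3$ or $D_4$ the set of $T$-classes of splittings is a torsor under $Z_2$, and read off $|Z_2|$ for each group on the list (with $G_2$ handled via the simply connected case of the proposition). That part is fine.

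The gap is in your treatment of types $A_3$ and $D_4$. You assert that ``the lattice condition behind Lemma \ref{l:lattices} can be verified by hand'' for $\SO(6)$ and $\SO(8)$. It cannot: Lemma \ref{l:lattices} is \emph{false} in types $A_3$ and $D_4$, which is precisely why those types are excluded from its hypotheses and from Proposition \ref{p:splittingunique}(1). Concretely, in $D_4$ with $\alpha=e_1+e_2$ the subsystem $\Delta(\alpha)$ is $A_1\times A_1\times A_1$, so the set \ref{e:lattices}(a) is empty and the lattice spanned by \ref{e:lattices}(b) has index $2$ in $(\ch\alpha)^\perp$ (it does not contain $e_1-e_2$); a similar index-$2$ failure occurs in $A_3$. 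This is also internally inconsistent with your own observation that Proposition \ref{p:splittingunique} genuinely fails for $PSL(4)$ (it must fail, since $PSL(4)$ has trivial center but two classes of splittings): the lattice statement depends only on the root system, so it cannot hold for $\SO(6)$ while failing for $PSL(4)$. Consequently your claim that the two visible splittings of $\SO(6)$, $\SO(8)$, and the one of $\PSO(8)$ ``exhaust all classes'' has no proof; some genuinely different argument is needed here (the paper itself resorts to an explicit, admittedly tedious, verification for $D_4$ and to the direct computation of Example \ref{ex:PSL(4)} for $A_3$). Relatedly, your proposed computation for $PSL(4)$ parametrizes lifts of $s_i$ by a single scalar $z$ times $p_i$; a priori a lift is $t\bar p_i$ for arbitrary $t\in T$, and reducing to the scalar case is exactly the step that Lemma \ref{l:lattices} would have supplied and now must be replaced by a direct torus computation.
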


\begin{proof}

Most cases follow from a combination of the Theorem and 
Proposition \ref{p:splittingunique}.

Suppose $G$ is of type $A_n$ with $n\ne 3$. By Proposition \ref{p:splittingunique} 
the lift is unique up to conjugacy by $T$ and multiplication by $Z_2$. However
by 
Theorem \ref{t:Wsplit} the assumption that $W$ lifts to $G$ implies $Z_2$ is trivial. 

If $G=PSL(4)$ then there are two non-conjugate splittings given in Example
\ref{ex:PSL(4)}. It is straightforward to see these are the only ones up to $T$-conjugacy, 
and the lifting to $SO(4)$ is unique up to $T$-conjugacy and the center.

In $SO(2n)$ $|Z_2|=2$ and in $G_2$ the center is trivial. The only
other exceptional case is $D_4$. It follows from a tedious and not
very enlightening argument that $W$ lifts to $SO(8)$, uniquely up to
$T$-conjugacy and multiplication by $-I$, and the lifting to $PSO(8)$
is unique up to $T$-conjugacy. We leave the details to the reader.

\end{proof}

\sec{Coxeter elements and elliptic conjugacy classes}
\label{s:coxeterandelliptic}

\subsec{Coxeter and twisted Coxeter elements}
\label{s:coxeter}

Choose an ordering $1,\dots, n$ of the simple roots.
The corresponding Coxeter element is $\Cox=s_1s_2\dots s_n$. 
All Coxeter elements are conjugate, regular and elliptic.

Now suppose $\delta$ is a distinguished automorphism. 
Write $i_1,\dots, i_k$ for representatives of the $\delta$-orbits on
the simple roots. A twisted Coxeter element is defined to be $\Cox'=s_{i_1}\dots s_{i_k}\delta\in W\delta$. 
These elements are all $W$-conjugate, elliptic and regular.
See \cite[Theorem 7.6]{springer}. 

\begin{proposition}
\label{p:cox}
Suppose $g\in G$ is a lift of $\Cox$. Then $g^{o(\Cox)}=z_G$.
Suppose $g\in G\delta$ is a lift of $\Cox'$. Then
$g^{o(\Cox')}=z_G$. 
\end{proposition}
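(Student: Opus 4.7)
\begin{proof_sketch}
My plan is to reduce $g^{o(\Cox)}=z_G$ to Lemma \ref{l:involutions} via a classical ``full twist'' identity in the Artin--Tits group of $W$. First, since $c:=\Cox$ is elliptic, the endomorphism $1-c$ of $T$ has trivial kernel and is therefore surjective, so any two lifts of $c$ to $N$ are $T$-conjugate. Because $c^{h}=1$ with $h=o(c)$, the element $\sigma(c)^{h}$ lies in $T$ and is fixed by $T$-conjugation; hence $g^{h}=\sigma(c)^{h}$ for every lift $g$, and it suffices to evaluate $\sigma(c)^{h}=(\sigma_1\sigma_2\cdots\sigma_n)^{h}$.

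For this I would invoke the classical identity $(\sigma_1\sigma_2\cdots\sigma_n)^{h}=\sigma(w_0)^{2}$ in the Artin--Tits group of $W$ (due to Brieskorn--Saito and Deligne). Since $\tits$ is the quotient of this Artin--Tits group by the relations $\sigma_\alpha^{2}=m_\alpha$, the identity descends to $\tits$ and gives $\sigma(c)^{h}=\sigma(w_0)^{2}$. Lemma \ref{l:involutions} then evaluates the right side: $\sigma(w_0)^{2}=(w_0\ch\rho-\ch\rho)(-1)=(-2\ch\rho)(-1)=(2\ch\rho)(-1)=z_G$, finishing the untwisted case. A more elementary alternative, avoiding Brieskorn--Saito, is to observe that when $-1\in W$ all Coxeter exponents are odd, so $c^{h/2}=w_0$ acts as $-1$ on $T$ and Lemma \ref{l:-1} applies directly; the remaining untwisted types $A_n$, $D_{2n+1}$, $E_6$ can then be handled either by the Artin--Tits identity or by a short direct calculation using the bipartite decomposition $c=s_J s_K$ into a product of two commuting involutions (in type $A_n$ one checks by hand in $\SL(n+1)$ that $\sigma(c)^{n+1}=(-1)^n I=z_G$).

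For the twisted Coxeter element $\Cox'=s_{i_1}\cdots s_{i_k}\delta$, the same ellipticity argument reduces the claim to evaluating $\sigma(\Cox')^{o(\Cox')}\in T$. I would apply the $\delta$-equivariant analogue of the full twist identity in a suitable extended Artin--Tits group, combined with the twisted part of Lemma \ref{l:involutions} (which gives $(\sigma(w_0)\delta)^{2}=z_G\delta^{2}$, and $\delta^{2}=1$ in the order-$2$ twisted types), to conclude $\sigma(\Cox')^{o(\Cox')}=z_G$. The main obstacle is this twisted case: the twisted full twist identity is less standard than its untwisted counterpart, and the $\threeD$ case (where $\delta$ has order $3$) will likely require separate direct verification, since the ``$-1$ on the reflection representation'' shortcut does not apply.
\end{proof_sketch}
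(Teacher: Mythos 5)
Your proposal is correct and its core mechanism is the same as the paper's: reduce to the full-twist identity $j(\Cox)^{h}=j(w_0)^{2}$ in the braid monoid, descend to the Tits group, and evaluate $\sigma(w_0)^{2}=(2\ch\rho)(-1)=z_G$ via Lemma \ref{l:involutions}. The difference is one of packaging. The paper does not argue for Coxeter elements separately at all: it observes that (twisted) Coxeter elements are regular and quotes Proposition \ref{p:regular}, whose proof rests on Brou\'e--Michel (Propositions 3.11 and 6.3), giving $j(w)^{d}=j(w_0)^{2}\delta^{d}$ for $d$-regular elements uniformly in the twisted and untwisted cases. You instead specialize to the Coxeter case, citing Brieskorn--Saito/Deligne for the untwisted full twist and offering the elementary shortcut $\Cox^{h/2}=w_0$ plus Lemma \ref{l:-1} when $-1\in W$; both are fine, and the $T$-conjugacy argument you use to pass from $\sigma(\Cox)$ to an arbitrary lift is exactly the ellipticity remark the paper makes in the introduction. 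The one place you flag uncertainty --- the twisted full-twist identity, including $\threeD$ --- is not actually an obstacle: it is precisely Brou\'e--Michel Proposition 6.3, which is the reference the paper's Proposition \ref{p:regular} invokes, and it applies verbatim to the twisted Coxeter element (which is $\mathbb Z$-regular with $d=o(\Cox')$, so the exponent $o(\Cox')/d$ equals $1$ and $\sigma(\Cox')^{o(\Cox')}=z_G$). So your plan closes completely once that citation is supplied; no separate hand verification of $\threeD$ is needed.
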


Since the (twisted) Coxeter elements are regular this follows from Proposition \ref{p:regular}.

It is convenient to formulate a variant of this in type $A$, using the
fact that we can take $-1$ for the outer automorphism of the root
system. 

Set $G=\SL(n,\C)$, with the usual diagonal Cartan subgroup and Borel
subgroup, and Weyl group $W$. Set 
\begin{equation}
\label{e:x}
x=
\begin{pmatrix}
0&0&\dots&0&1\\  
0&0&\dots&-1&0\\  
\vdots&\vdots&\dots&\vdots&\vdots\\
(-1)^{n+1}&0&\dots&0&0
\end{pmatrix}
\end{equation}
Then $\deltaG=\langle G,\delta\rangle$ where $\delta g\delta^{-1}=x(\gti) x\inv$ and $\delta^2=1$,
and similarly $\deltaW=\langle W,\delta\rangle$.

Let $\epsilon=x\delta$. Then 
\begin{equation}
\label{e:epsilon}
\deltaG=\langle G,\epsilon\rangle,\quad \epsilon g\epsilon^{-1}=\gti, \epsilon^2=z_G.
\end{equation}

\begin{lemma}
Suppose $G$ is of type $A_{n-1}$, and let $\Cox$ be a Coxeter element of $W$.
If $n$ is odd then $\Cox\cdot\epsilon$ is an elliptic regular element
of $W\delta$, $o(\Cox\cdot\epsilon)=2n$, and 
if $g$ is any lift of $\Cox\cdot\epsilon$ then $g^{2n}=z_G$.
\end{lemma}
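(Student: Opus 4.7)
The plan is to exploit the fact that in the reflection representation $\mathfrak{h}=\{(x_1,\dots,x_n)\in F^n\mid\sum x_i=0\}$, the distinguished automorphism $\delta$ acts as $-w_0$ (it is, up to conjugation by $x$, the Chevalley involution $g\mapsto\gti$), so the image $\Cox w_0\delta$ of $\Cox\cdot\epsilon$ in $W\delta$ acts on $\mathfrak{h}$ as the single linear operator $-\Cox$.

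First I would establish the order, ellipticity, and regularity from this description. The Coxeter element $\Cox$ has eigenvalues $\zeta,\zeta^2,\dots,\zeta^{n-1}$ on $\mathfrak{h}$ where $\zeta=e^{2\pi i/n}$, with a $\zeta$-eigenvector $v=(1,\zeta^{-1},\zeta^{-2},\dots,\zeta^{-(n-1)})$. Since $n$ is odd, $-1\notin\{\zeta^k\}$, so $-\Cox$ has no nonzero fixed vector, which proves ellipticity. The entries of $v$ are pairwise distinct, so $v$ avoids every root hyperplane $x_i=x_j$, and it is a $(-\zeta)$-eigenvector of $-\Cox$ with $-\zeta$ a primitive $2n$-th root of unity; this gives regularity. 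For the order, a direct computation in $W\delta$ using $\delta(\Cox)=\Cox^{-1}$ and $w_0\Cox^{-1}w_0=\Cox$ gives $(\Cox w_0\delta)^2=\Cox^2$, so $(\Cox\cdot\epsilon)^m=1$ forces $m$ to be even, say $m=2k$, with $\Cox^{2k}=1$, i.e.\ $n\mid k$; the minimum positive such $m$ is $2n$.

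For the claim on lifts, since $\Cox\cdot\epsilon$ is elliptic, all lifts are $G$-conjugate and $g^{2n}\in Z(G)$ is the same for every lift, so it suffices to compute for a convenient choice. Take $P\in\SL(n)$ to be the cyclic permutation matrix $Pe_j=e_{j+1\bmod n}$, which lies in $\SL(n)$ because $\det P=(-1)^{n-1}=1$ for $n$ odd, and which lifts $\Cox$. Then $g=P\epsilon$ is a lift of $\Cox\cdot\epsilon$, and since $P$ is a permutation matrix $P^t=P^{-1}$, i.e.\ $P^{-t}=P$. Using $\epsilon P\epsilon^{-1}=P^{-t}$ and $\epsilon^2=z_G$,
\begin{equation*}
g^2=P\cdot(\epsilon P\epsilon^{-1})\cdot\epsilon^2=P\cdot P^{-t}\cdot z_G=P^2\cdot z_G.
\end{equation*}
For $G=\SL(n)$ with $n$ odd, $z_G$ is a $2$-torsion element of the cyclic center $\mu_n$, hence $z_G=I$; therefore $g^{2n}=(P^2z_G)^n=P^{2n}=I=z_G$.

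The main piece of bookkeeping is verifying that $\delta$ acts on $\mathfrak{h}$ as $-w_0$ and hence that $\Cox\cdot\epsilon$ acts on $\mathfrak{h}$ as $-\Cox$; once this identification is in place, each of the three claims reduces to a short direct computation, and the conjugacy of all lifts of an elliptic element (already proved earlier in the paper) handles the passage from the specific lift $P\epsilon$ to arbitrary lifts.
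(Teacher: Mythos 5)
Your proof is correct and follows essentially the same route as the paper's: the identity $g^2=P\,(\epsilon P\epsilon^{-1})\,\epsilon^2=P\cdot{}^tP^{-1}\cdot z_G=P^2z_G$ is exactly the computation carried out in Section \ref{s:more} for the twisted type $A_{n-1}$ classes (here for the one-part partition $(n)$), with your explicit cyclic permutation matrix playing the role of the paper's abstract lift $g$ chosen with ${}^tg^{-1}=g$, and your eigenvalue analysis of $-\Cox$ supplying the ellipticity, regularity and order claims that the paper delegates to Sections \ref{s:coxeter}--\ref{s:ellclassical}. The only nitpick is that the reflection representation used for regularity should be taken over $\overline{\Q}$ as in Section \ref{s:regular} (or over $\C$, consistent with the paper's choice $G=\SL(n,\C)$ in this subsection), rather than over a general field $F$.
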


\subsec{Elliptic conjugacy classes in the classical groups}
\label{s:ellclassical}

We use these results to 
describe the elliptic conjugacy classes in the classical Weyl groups. 
See \cite[Section 3.4]{geck_pfeiffer} or \cite[Section 3]{GM} for the untwisted cases, and \cite[Section 3 \& 4]{GKP} or \cite[Section 7]{He}
for the twisted ones.

\medskip
\noindent
{\bf Type $\mathbf {A_{n-1}}$}

The only elliptic conjugacy class of $W$ is that of  the Coxeter elements.

For $m\ge 2$ let $\Cox_m$ be a Coxeter element of type $A_{m-1}$, and set $\Cox_1=1$.
Suppose  $\P=(a_1,\dots, a_l)$ is  a partition of $n$ with all odd
parts. Using \eqref{e:epsilon} set 

\begin{equation}
\label{e:EP}
\E(\P)=(\Cox_{a_1}\times\cdots\times \Cox_{a_l})\epsilon\in W\delta
\end{equation}
where $\Cox_{a_1}\times\cdots\times \Cox_{a_l}$ is embedded diagonally as usual.
It is immediate that $\E(\P)$ is elliptic, and 
$$
o(\E(\P))=2\cdot\lcm(a_1,\dots, a_l).
$$
Furthermore the map  $\P\rightarrow\E(\P)$ gives a bijection 
between partitions of $n$ with all odd parts and elliptic conjugacy
classes of $W\delta$. 

\medskip
\noindent
{\bf Type $\mathbf{B_n/C_n}$}
Let $\Cox_n$ be a Coxeter element  of $W(B_n)$.
Suppose $\P=(a_1,\dots, a_k)$ is a partition of $n$,  embed
$B_{a_1}\times\dots\times B_{a_k}$ in $B_n$ as usual, and 
set $\E(\P)=\Cox_{a_1}\times \cdots\times \Cox_{a_k}$ of $W$. Then $\E(\P)$ is
elliptic, 
and the map $\P\rightarrow\E(\P)$ defines a bijection between partitions of $n$ and conjugacy
classes of elliptic elements of $W(B_n)$.

Exactly the same result holds with type $C$ in place of type $B$.

\medskip
\noindent
{\bf Type $\mathbf{D_n}$:}
Let  $\delta_n$ be a 
distinguished automorphism of order $2$, and choose the numbering of the simple roots so that root $n$ is not fixed by $\delta_n$. 
Set
$$
\Cox'_n=s_1s_2\dots s_{n-1}\delta_n.
$$
This is the twisted Coxeter element of $W(D_n)\delta_n$, and is an
elliptic regular element of $W\delta$.

Suppose $\P=(a_1,\dots, a_k)$ is a partition of $n$, and embed
$D_{a_1}\times \dots\times D_{a_k}$ in $D_n$ as usual. Then 
$W(D_{a_1})\delta_{a_1}\times\dots \times
W(D_{a_k})\delta_{a_k}$ embeds naturally in $W(D_n)\delta_n$.
Set 
$$
\E(\P)=\Cox'_{a_1}\times\dots\times\Cox'_{a_k}.
$$
Then $\E(\P)$ is an elliptic element 
of $W(D_n)$ if $k$ is even, or $W(D_n)\delta_n$ if $n$ is odd, 
and $\P\rightarrow\E(\P)$ is a bijection between the partitions of $n$ and the 
union of the elliptic conjugacy classes of $W(D_n)$ and
$W(D_n)\delta_n$. 

\sec{Good representatives of conjugacy classes in Weyl groups}
\label{s:good}

Let $B^+$ be the braid monoid associated to the Coxeter system $(W,\Pi)$. 
Let $j: W \to B^+$ be the canonical injection identifying the
generators of $W$ with the corresponding generators of $B^+$ and
$j(w w')=j(w) j(w')$ for $w, w' \in W$ with
$\ell(w w')=\ell(w)+\ell(w')$.

The distinguished automorphism $\d$ of $G$ (and hence of $W$) induces
an automorphism  of $B^+$, which we still denote by $\d$. Define the
extended Braid monoid ${}^\d B^+=B^+ \rtimes \langle\d\rangle$. The
injection $j$ extends in a canonical way to an injection
${}^\d W \to {}^\d B^+$, which we still denote by $j$.

Following \cite{GM}, we call $w \in W\delta$ a good element if
there exists a strictly decreasing sequence
$S_1 \supsetneq \cdots \supsetneq S_l$ of subsets of $\Pi$
and even positive integers $d_1,\cdots,d_l$ such
that 
\begin{equation}
\label{e:j(w)}
j(w)^{o(w)}=j(w_0(S_1))^{d_1}\cdots j(w_0(S_l))^{d_l},
\end{equation}
where 
$w_0(S_i)$ is the longest
element of the parabolic subgroup $W(S_i)$ of $W$.

\begin{proposition}
\label{p:good}
Every conjugacy class of $W\delta$ contains a good element.
\end{proposition}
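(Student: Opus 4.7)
The plan is to combine a parabolic-reduction argument with a type-by-type analysis of the elliptic (cuspidal) conjugacy classes. First I would appeal to the Geck--Pfeiffer reduction theorem, in the $\delta$-twisted form proved by He--Nie: if $\Cal$ is a conjugacy class in $W\delta$ that is not elliptic in $W\delta$, then $\Cal$ contains an element $w\delta$ with $w \in W(S)$ for some proper $\delta$-stable subset $S \subsetneq \Pi$, and moreover $w\delta$ is elliptic in $W(S)\delta$. Because the standard braid monoid $B^+(S)$ embeds in $B^+$, and the longest element $w_0(S')$ of any $S' \subset S$ agrees with its image in the ambient monoid, any good representative in $W(S)\delta$ is automatically good in $W\delta$ (one just prepends $S_0 = \Pi$ if one wants a chain starting at $\Pi$, or simply uses the same chain inside $\Pi$). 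Thus by induction on $|\Pi|$ we may assume $\Cal$ itself is elliptic.

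Next I would treat the elliptic case using the explicit representatives $\E(\P)$ described in Section \ref{s:ellclassical}. In type $A_{n-1}$, for each block $\Cox_{a_i}$ the classical braid identity
\[
j(\Cox_{m})^{m} \;=\; j(w_{0}(A_{m-1}))^{2}
\]
(the fact that the full twist is the square of $w_0$ on a Coxeter element) immediately gives a factorization of $j(\E(\P))^{o(\E(\P))}$ through the longest elements of a strictly decreasing chain of $\delta$-stable parabolics, obtained by iteratively stripping off the blocks of $\P$ in order of increasing size. The same strategy applies in types $B_n, C_n, D_n$ using the braid identities for $\Cox_n$ or the twisted Coxeter element $\Cox'_n$ in each parabolic Levi factor; the partition $\P$ dictates the chain $S_1 \supsetneq \cdots \supsetneq S_l$ and the even exponents $d_i$. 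For the exceptional types and the twisted classes in $E_6$ and $D_4$ the verification is done case-by-case; here I would quote the tables of Geck--Michel \cite{GM} and He \cite{He}, which supply an explicit good representative together with the required braid identity for every elliptic class.

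The main obstacle is that \eqref{e:j(w)} is an equation in the braid monoid $B^+$, not in $W$; in $W$ both sides equal $1$ and there is nothing to prove, so the entire content lies at the monoid level. For the cuspidal classes in classical types, the partition-block argument above reduces everything to the Coxeter identity and is uniform, but for the exceptional elliptic classes one cannot avoid explicit computation in $B^+$, and this is the step that genuinely requires case work (or a citation of the existing literature). Once the elliptic case is settled, the parabolic reduction handles every remaining class and the proposition follows.
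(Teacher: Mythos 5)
Your outline is essentially the paper's own treatment: the paper disposes of this proposition by citing \cite{GM}, \cite{GKP} and \cite{He} for the case-by-case verification and \cite{he_nie} for a general argument, and your two-step strategy (Geck--Pfeiffer/He--Nie parabolic reduction to the elliptic case, then explicit braid-monoid identities for the elliptic classes built from Coxeter elements of Levi blocks) is precisely what those references do. Since you too must ultimately quote their tables for the exceptional (and twisted) elliptic classes, there is no real divergence in approach.
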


This is is proved in \cite{GM}, \cite{GKP} and \cite{He} via
case-by-case analyses, and a general proof is in \cite{he_nie}.
In fact, we may choose a good element having minimal length in the conjugacy class.

If $w$ is written as in \eqref{e:j(w)} then the image of $j(w)$ in the Tits group is $\sigma(w)$, so 
by Lemma \ref{l:w_0(S)}:
\begin{equation}
\label{e:sigma(w)}
\sigma(w)^{o(w)}=(\sum_{i=1}^l d_i \rho^\vee(S_i))(-1)=\prod_{i=1}^\ell z_{L(S_i)}{}^{d_i/2}.
\end{equation}
where $z_{L(S_i)}$ is the principal involution in the Levi factor $L(S_i)$. 

Assuming we know the $d_i$ and $S_i$ explicitly, this gives a formula
for $\sigma(w)^{o(w)}$, and (at least in the elliptic case)
$o(\sigma(w))$.  Thus we need the explicit formulas of
\cite{GM}, \cite{GKP} and \cite{He}. See Section \ref{s:more}.

\sec{Regular Elements}
\label{s:regular}

Fix a distinguished automorphism $\delta$ of $G$.
Let $\overline\Q$ be an algebraic closure of $\Q$, and set $V=X_*\otimes\overline\Q$,  and
\begin{equation}
\Vreg=\{v\in V\mid \langle \alpha,v\rangle\ne 0\text{ for all }\alpha\in \Delta\}.
\end{equation}
We say that $w \in W \delta$ is
{\it regular} if it has an eigenvector $v\in\Vreg$. In this case
if the eigenvalue of $v$ is $\zeta$, we say $w$ is {\it d-regular} if $\zeta$ has order $d$.

It obvious that both $d$ and $o(\delta)$ divide $o(w)$. The case
of $d=o(w)$ is of particular significance.

\begin{lemma}
Suppose $w\in W\delta$ is $d$-regular. 
Then $o(w)=\lcm(o(\delta),d)$.
The following conditions are equivalent:
\begin{enumerate}
\item $d=o(w)$,
\item $o(\delta)$ divides $d$,
\item $\langle w\rangle$ acts freely on the roots.
\end{enumerate}
If $w\in W$ is $d$-regular then $d=o(w)$.
\end{lemma}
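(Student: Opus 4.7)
The plan is to prove the formula $o(w) = \lcm(o(\delta),d)$ first, and then deduce the equivalences from it by a pairing calculation together with a chamber-stabilizer argument.

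For the formula, I first show both $d \mid o(w)$ and $o(\delta) \mid o(w)$. The former comes from $w^{o(w)} v = \zeta^{o(w)} v$ combined with $w^{o(w)} = 1$, which forces $\zeta^{o(w)}=1$. The latter comes from the projection $\deltaW \to \deltaW/W \cong \langle\delta\rangle$, which sends $w$ to $\delta$. For the reverse direction, set $N = \lcm(o(\delta),d)$: then $w^N \in W$ (since $o(\delta) \mid N$) and $w^N v = v$ (since $d \mid N$); because $v$ is regular its $W$-stabilizer is trivial, so $w^N = 1$ and hence $o(w) \mid N$.

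With the formula in hand, the equivalence (1) $\Leftrightarrow$ (2) is just $d = \lcm(o(\delta),d) \iff o(\delta) \mid d$. For (1) $\Rightarrow$ (3), the pairing computation
\[
\langle w^k \alpha, v\rangle = \langle \alpha, w^{-k} v\rangle = \zeta^{-k}\langle \alpha, v\rangle,
\]
combined with $\langle \alpha, v\rangle \ne 0$, shows that $w^k \alpha = \alpha$ forces $d \mid k$; when $d = o(w)$ this yields $w^k = 1$, so $\langle w\rangle$ acts freely on the roots.

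The main obstacle is (3) $\Rightarrow$ (2), which I prove contrapositively. Assume $o(\delta) \nmid d$; then $o(w) > d$ and $w^d \ne 1$, yet $w^d v = v$. Since $w^d$ acts $\Q$-linearly on $X_* \otimes \Q$, its $1$-eigenspace is $\Q$-rational, and so contains a real regular vector $v'$ lying in a unique Weyl chamber $C$ of $X_* \otimes \R$. Because $\delta$ is distinguished it preserves the fundamental chamber, and $W$ acts simply transitively on chambers, so the $\deltaW$-stabilizer of any chamber is a $W$-conjugate of $\langle\delta\rangle$. Comparing images in $\deltaW/W$ then forces $w^d = g\delta^d g^{-1}$ for some $g \in W$. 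It now suffices to produce a $\delta^d$-fixed root, which I do by a brief case-by-case check on the nontrivial distinguished automorphisms of an irreducible root system: in type $A_n$ the highest root $\alpha_1 + \cdots + \alpha_n$ is fixed, and in types $D_n$, $E_6$, and for the triality automorphisms of $D_4$, at least one simple root is fixed. Applying $g$ to such a root yields a root fixed by $w^d$, contradicting (3). The final assertion $d = o(w)$ for untwisted $w \in W$ is just the case $\delta = 1$.
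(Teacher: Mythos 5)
Your proof is correct for the case the paper actually uses, and it is genuinely different from the paper's: the paper disposes of every part of this lemma by citation (the order formula and (1)$\Leftrightarrow$(2) to Brou\'e--Michel and Reeder--Levy--Yu--Gross, (1)$\Rightarrow$(3) to Springer, (3)$\Rightarrow$(2) to RLYG), whereas you give a self-contained elementary argument. Your order formula and the pairing computation for (1)$\Rightarrow$(3) are exactly Springer's, and your chamber-stabilizer argument for (3)$\Rightarrow$(2) --- pass from the $\overline\Q$-eigenvector to a real regular vector in the rational fixed space of $w^d$, identify the stabilizer of its chamber with a $W$-conjugate of $\langle\delta\rangle$, and then exhibit a $\delta^d$-fixed root --- is a clean replacement for the RLYG argument. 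Two caveats. First, your final case check presumes the root system is irreducible; the paper's standing hypotheses allow $\delta$ to permute simple factors (cf.\ the lemma on $G=G_1\times\cdots\times G_1$ with $\delta$ acting cyclically), and there the implication (3)$\Rightarrow$(2) genuinely fails: for $G=G_1\times G_1$ with $\delta$ the factor swap, $w=\delta$ is $1$-regular, $o(\delta)\nmid 1$, yet $\langle\delta\rangle$ fixes no root. So your proof (and really the lemma itself) should be read with $G$ simple, which is the only case the paper applies it to. Second, the triviality of the $W$-stabilizer of a regular vector in $X_*\otimes\overline\Q$ is not pure chamber geometry --- it is Steinberg's fixed-point theorem, or alternatively follows from the same rationality trick you use later (the fixed space of the stabilizing element is rational and contains a real regular vector); a one-line pointer there would make the argument airtight.
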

The elements $w$ satisfying the conditions of the Lemma are called $\mathbb Z$-regular in \cite{rgly}.

\begin{proof}
The first assertion is proved in \cite{broue_michel} and \cite{rgly}, which gives the equivalence of (1) and (2). 
The implication (1) implies (3) 
is proved in \cite{springer_regular}, following an argument of Kostant for the Coxeter element, and (3)$\Rightarrow$(2) 
is proved in \cite{rgly}. The final assertion is the case $o(\delta)=1$.
\end{proof}

The obvious case in which $d<o(w)$ is if $d=1$, which is easy to handle.

\begin{lemma}\label{d=1} We have $d=1$ if and only if $w$ is conjugate to $\d$. 
\end{lemma}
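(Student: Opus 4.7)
The plan is to prove both implications using the geometry of the action of $W\delta$ on the reflection representation $V$ and the set of Weyl chambers.

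For the backward direction, it suffices to verify that $\delta$ itself is $1$-regular, since being $1$-regular depends only on the $W$-conjugacy class in $W\delta$. Being distinguished, $\delta$ preserves the pinning $\P$, hence $\Pi$ and $\Delta^+$, and therefore the fundamental Weyl chamber $C_0=\{v\in V\mid \langle\alpha,v\rangle>0\text{ for all }\alpha\in\Delta^+\}$. The group $\langle\delta\rangle$ is finite and acts linearly on the convex open cone $C_0$, so averaging any point of $C_0$ over $\langle\delta\rangle$ yields a $\delta$-fixed vector lying in $C_0\subseteq\Vreg$. Hence $\delta$ has a regular eigenvector with eigenvalue $1$, and so is $1$-regular.

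For the forward direction, suppose $wv=v$ with $v\in\Vreg$, and let $C$ be the unique open Weyl chamber containing $v$. The element $w\in W\delta$ acts on $V$ preserving $\Delta$, hence permutes Weyl chambers; since $wC$ contains $wv=v$ and chambers are disjoint, $wC=C$. By simple transitivity of $W$ on chambers, write $C=uC_0$ for a unique $u\in W$. Then $u^{-1}wu\in W\delta$ stabilizes $C_0$. Writing $u^{-1}wu=w'\delta$ with $w'\in W$ and using $\delta C_0=C_0$, we get $w'C_0=C_0$, and simple transitivity forces $w'=1$. Thus $u^{-1}wu=\delta$, so $w$ is $W$-conjugate to $\delta$.

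The argument is essentially geometric and I do not foresee any serious obstacle. The only points needing care are keeping track of the action of the semidirect product $W\rtimes\langle\delta\rangle$ on chambers, and the fact that $\delta$-averaging preserves the open cone $C_0$ (which is immediate from convexity and $\delta$-stability). Both are routine.
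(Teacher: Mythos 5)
Your proof is correct and follows essentially the same route as the paper: for the forward direction, conjugate the regular fixed vector into the fundamental chamber and use simple transitivity of $W$ on chambers to force the $W$-part to be trivial. The only cosmetic difference is in the converse, where the paper simply observes that $\delta$ fixes the regular element $\rho^\vee$ (so a conjugate $x\delta x^{-1}$ fixes $x\rho^\vee$), whereas you produce a $\delta$-fixed regular vector by averaging over $\langle\delta\rangle$; both are fine.
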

\begin{proof} If $d=1$ then $w\ch\gamma=\ch\gamma$ for a regular element $\ch\gamma$. After
conjugating by an element of $W$, we may assume that $\ch\gamma$ is in the
dominant chamber, which implies   $w=\delta$. Conversely, if $w=x \d x \inv$,
then $w$ fixes the regular element $x\rho^\vee$, hence $w$ is
$1$-regular.
\end{proof}

We have the following result on the $d$-regular elements.

\begin{proposition}
\label{p:regular}
Let $C$ be a conjugacy class of $d$-regular elements in $W \delta$ with $d>1$. Then $C$ contains an  element $w$ so that in the Braid group
$$
j(w)^{o(w)}=j(w_0)^{2 o(w)/d} 
$$
and in the Tits group
$$
\sigma(w)^{o(w)}=z_G^{o(w)/d}\in Z(G).
$$
\end{proposition}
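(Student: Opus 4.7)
The plan is to reduce the identity in $\tits$ to an identity in the braid monoid ${}^\d B^+$ and then push it down via the canonical surjection $j(w)\mapsto\sigma(w)$. Once $j(w)^{o(w)}=j(w_0)^{2o(w)/d}$ is established in ${}^\d B^+$, applying Lemma \ref{l:w_0(S)} with $S=\Pi$ gives $\sigma(w_0)^2=z_G$ and $\d(\sigma(w_0))=\sigma(w_0)$, so $\sigma(w_0)^{2o(w)/d}=z_G^{o(w)/d}$ is central and $\delta$-fixed. Projecting the braid identity therefore yields $\sigma(w)^{o(w)}=z_G^{o(w)/d}$ directly. Thus all the content is in the braid monoid identity.

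Next I would observe that $j(w_0)^2$ is central in $B^+$ (the classical full twist / Garside fact), so the right hand side $j(w_0)^{2o(w)/d}$ is central in ${}^\d B^+$, and in particular the asserted identity is conjugation invariant. It therefore suffices to verify it on a single conveniently chosen representative of $C$. By Proposition \ref{p:good} there is a minimal-length good $w\in C$ with
$$
j(w)^{o(w)}=j(w_0(S_1))^{d_1}\cdots j(w_0(S_l))^{d_l}
$$
for some chain $S_1\supsetneq\cdots\supsetneq S_l$ of subsets of $\Pi$ and even positive integers $d_i$.

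The heart of the argument is to show that for a $d$-regular $w$ with $d>1$ this decomposition collapses to $l=1$, $S_1=\Pi$, $d_1=2o(w)/d$. I would invoke Springer's theory of regular elements, in the form of \cite{springer_regular} together with its twisted extension in \cite{broue_michel} and the explicit good-element version in \cite{rgly}: a minimal length $d$-regular element in $W\d$ has length $\ell(w)=2\ell(w_0)/d$. Equating the braid-monoid lengths of both sides of the good decomposition gives
$$
o(w)\,\ell(w)=\sum_{i=1}^{l}d_i\,\ell(w_0(S_i))=\tfrac{2o(w)}{d}\,\ell(w_0).
$$
Combined with Springer's explicit construction of a regular representative whose good decomposition has $S_1=\Pi$, and with the inequality $\ell(w_0(S_i))<\ell(w_0)$ for $S_i\subsetneq\Pi$, this forces $l=1$, $S_1=\Pi$, and then $d_1=2o(w)/d$.

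The main obstacle is the Springer-theoretic input: that a minimal length $d$-regular class admits a good representative with $S_1=\Pi$ and length $2\ell(w_0)/d$. This is precisely the generalization of the Brieskorn--Saito relation $j(c)^h=j(w_0)^2$ to regular classes, and in the twisted setting is due to Broué--Michel and He--Nie. Rather than reprove it, the plan is to quote \cite{springer_regular,broue_michel,rgly} for this ingredient and then assemble the Tits group consequence via Lemma \ref{l:w_0(S)} as described above; the remaining steps (centrality of the full twist, length bookkeeping, and the projection $B^+\to\tits$) are formal.
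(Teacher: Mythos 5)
Your endgame coincides with the paper's: once the braid identity $j(w)^{o(w)}=j(w_0)^{2o(w)/d}$ is in hand, projecting $B^+\to\tits$ and applying Lemma \ref{l:w_0(S)} with $S=\Pi$ gives $\sigma(w)^{o(w)}=z_G^{o(w)/d}$, exactly as in the paper. The difference is in how the braid identity is obtained. The paper simply quotes Brou\'e--Michel (Propositions 3.11 and 6.3 for the untwisted and twisted cases) for the statement $j(w)^{d}=j(w_0)^2\delta^{d}$ for a suitable representative of the $d$-regular class, and raises both sides to the power $o(w)/d$; no good-element decomposition and no length counting are needed.

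Your intermediate derivation via Proposition \ref{p:good} has a genuine gap. Taking lengths in $j(w)^{o(w)}=j(w_0(S_1))^{d_1}\cdots j(w_0(S_l))^{d_l}$ and using $\ell(w)=2\ell(w_0)/d$ yields only the single equation $\sum_i d_i\,\ell(w_0(S_i))=\tfrac{2o(w)}{d}\,\ell(w_0)$, equivalently the inequality $\sum_i d_i\ge 2o(w)/d$. This does not force $l=1$: even granting $S_1=\Pi$, the equation is consistent with $d_1<2o(w)/d$ together with additional factors $j(w_0(S_i))^{d_i}$ for proper subsets $S_i$, so the collapse you need is not a consequence of length bookkeeping. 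The appeal to ``Springer's explicit construction of a regular representative whose good decomposition has $S_1=\Pi$'' either begs the question (if the construction already produces the one-term decomposition, it \emph{is} the Brou\'e--Michel relation and the length argument is superfluous) or leaves the crucial step unproved. A further small inaccuracy: the braid identity is not conjugation-invariant over all of $C$, since conjugating $j(w)$ by $j(x)$ does not in general produce $j(xwx\inv)$ unless lengths add --- which is exactly why the proposition only asserts the existence of \emph{one} such $w$ in $C$; this is harmless here because one representative is all that is claimed. The clean fix is to cite Brou\'e--Michel for $j(w)^d=j(w_0)^2\delta^d$ directly, as the paper does, and drop the good-element detour.
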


\begin{proof}
According to 
\cite{broue_michel}, Proposition 3.11 and 6.3 (for the untwisted and twisted cases, respectively),
$$
j(w)^d=j(w_0)^2\delta^d.
$$
Raise both sides to the power $o(w)/d$, and use the fact that
$\delta^{o(w)}=1$ to conclude the first statement, and the second is an immediate consequence of this.
\end{proof}

\begin{corollary}
If $w$ is $\mathbb Z$-regular then have $\sigma(w)^{o(w)}=z_G$, and $o(\sigma(w))=o(w)$ if and only if $\ch\rho\in X_*(T)$.  
\end{corollary}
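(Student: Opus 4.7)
The plan is to prove this as a direct consequence of Proposition \ref{p:regular}, combined with the elementary computation of when $z_G$ is trivial in $T$. Since $w$ is $\mathbb{Z}$-regular, the preceding lemma tells us that $d=o(w)$. Assuming $d>1$ so that Proposition \ref{p:regular} applies (the edge case $d=1$ forces $\delta=1$ and $w=1$ by Lemma \ref{d=1}, where both assertions are either trivial or reduce to the same computation of $z_G$), we immediately get
$$
\sigma(w)^{o(w)} = z_G^{o(w)/d} = z_G^1 = z_G,
$$
which handles the first assertion.

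For the second assertion, I would first note the general fact that $o(\sigma(w))$ is a multiple of $o(w)$ (since $\sigma(w)$ projects to $w$), and that $\sigma(w)^{2o(w)} = z_G^2 = (2\ch\rho)(1) = 1$. Hence $o(\sigma(w))$ lies in $\{o(w), 2o(w)\}$, and equals $o(w)$ precisely when $\sigma(w)^{o(w)} = 1$, i.e.\ when $z_G = 1$.

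Thus it remains to verify that $z_G = 1$ if and only if $\ch\rho \in X_*(T)$. Since $\tchar(F)\ne 2$, evaluation at $-1$ gives an isomorphism $X_*(T)/2X_*(T) \xrightarrow{\sim} T[2]$. Therefore $(2\ch\rho)(-1)=1$ in $T$ is equivalent to $2\ch\rho \in 2X_*(T)$, which — since $2\ch\rho$ always lies in the coroot lattice $\subset X_*(T)$ — holds precisely when $\ch\rho$ itself already lies in $X_*(T)$. Combining these observations yields the desired equivalence.

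There is no genuine obstacle here; the only subtlety worth checking carefully is the two-line computation that $o(\sigma(w))\in\{o(w),2o(w)\}$ (which is a general feature of the Tits group, already noted in Section \ref{s:intro}), so that the vanishing of $z_G$ is both necessary and sufficient for the lift to attain the minimal possible order.
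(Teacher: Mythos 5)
Your argument is correct and is precisely the one the paper intends: the corollary appears there with no written proof, as an immediate consequence of Proposition \ref{p:regular} with $d=o(w)$, combined with the two observations you supply, namely that $o(\sigma(w))\in\{o(w),2o(w)\}$ and that $z_G=(2\ch\rho)(-1)=1$ exactly when $\ch\rho\in X_*(T)$ (the same dichotomy used in Lemma \ref{l:-1}). The only caveat --- which you inherit from the statement itself --- is that Proposition \ref{p:regular} produces \emph{some} representative of the $\mathbb Z$-regular class with $\sigma(w)^{o(w)}=z_G$, not every one (in $\SL(3)$ the reflections $s_1$ and $w_0=s_1s_2s_1$ are conjugate and both $\mathbb Z$-regular, yet $\sigma(s_1)^2=\ch\alpha_1(-1)\ne 1=\sigma(w_0)^2$), so both the corollary and your deduction should be read as applying to that good representative, i.e.\ up to conjugacy, exactly as the paper does explicitly in the proposition that follows the corollary.
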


\begin{remark}
An example in which $1<d<o(w)$  is given in \cite{broue_michel},
Proof of Proposition 6.5.  Consider ${}^2 A_5$, so $\delta$ is the
nontrivial diagram automorphism of order $2$. Let $C$ be the conjugacy class of 
$w=(s_1 s_3 s_5 s_2 s_4)^2 \delta$. It is easy to check that $o(w)=6$,
$w$ is $3$-regular, and also that $w$ is good, so by the Proposition $j(w)^6=j(w_0)^4$ and $\sigma(w)^6=z_G^2=1$.
\end{remark}

Finally, we have

\begin{proposition}
Let $w$ be a regular element. Then $\oad(w)=o(w)$.
\end{proposition}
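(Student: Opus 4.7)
The plan is to split on the value of $d$ where $w$ is $d$-regular, and invoke two results already in hand: Lemma \ref{d=1} for the degenerate case $d=1$, and Proposition \ref{p:regular} for $d>1$. Since $\oad(w)$ depends only on the conjugacy class of $w$, we are free in each case to replace $w$ by a convenient representative supplied by these results.

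First I would dispense with the case $d=1$. By Lemma \ref{d=1}, $w$ is conjugate in $\deltaW$ to the element $\delta$, so $o(w)=o(\delta)$. The element $\delta\in\deltaG$ itself lies in $\deltaN$, projects to $\delta\in\deltaW$, and has order $o(\delta)$ in $\deltaG_{\text{ad}}$ (since $\delta$ normalizes the chosen pinning, the automorphism is honest of order $o(\delta)$). Thus $\oad(\delta)=o(\delta)=o(w)$, which together with the trivial inequality $\oad(w)\ge o(w)$ gives equality.

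Now assume $d>1$. By Proposition \ref{p:regular}, the conjugacy class of $w$ contains a representative (which I again call $w$) satisfying
\[
\sigma(w)^{o(w)}=z_G^{o(w)/d}
\]
in the Tits group $\deltaT$. Specializing to $G=\Gad$, the principal involution $z_{\Gad}=(2\ch\rho)(-1)$ lies in the trivial center $Z(\Gad)$, hence $z_{\Gad}=1$. Therefore, working inside the Tits group of $\Gad$, we get $\sigma(w)^{o(w)}=1$. Since $\sigma(w)\in\deltaN$ is a lift of $w$, its order in $\Gad$ divides $o(w)$; and since any lift of $w$ has order at least $o(w)$, we conclude $o(\sigma(w))=o(w)$ and hence $\oad(w)=o(w)$.

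There is essentially no obstacle: the work has already been done in Proposition \ref{p:regular}, whose proof rests on Broué--Michel's identity $j(w)^d=j(w_0)^2\delta^d$ in the braid monoid and the fact that $\sigma(w_0)^2=z_G$ from Lemma \ref{l:involutions}. The only item requiring care is making sure the $d=1$ edge case — where Proposition \ref{p:regular}'s formula is vacuous — is handled separately via Lemma \ref{d=1}, which identifies such $w$ with $\delta$ up to conjugacy.
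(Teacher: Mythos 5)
Your proposal is correct and follows the same route as the paper: the case $d=1$ is handled via Lemma \ref{d=1} by noting that $\delta$ itself is a lift of order $o(\delta)$, and the case $d>1$ via Proposition \ref{p:regular}, using that $z_{\Gad}=1$ so the good representative satisfies $\sigma(w)^{o(w)}=1$ in $\Gad$. The only addition is your explicit remark that any lift has order a multiple of $o(w)$, which the paper leaves implicit.
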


\begin{proof}
Suppose $w$ is a $d$-regular element. 
If $d=1$ then by Proposition \ref{d=1}, $w$
is conjugate to $\delta$. By definition of $\deltaT$, the lift of $\delta$ to $\deltaT$ 
has the same order of $\delta$.

Assume $d>1$.
By
Proposition \ref{p:regular}, $w$ is conjugate to an element $w'$ with
$\sigma(w')^{o(w)}=1 \in \Gad$. We take the lifting of $w$ to be a
conjugate of $\sigma(w')$. Then the order of that lifting equals
$o(w)$.

\end{proof}

\sec{Theorem \ref{t:A}: Exceptional Cases}

We still need to prove Theorem \ref{t:A}(2) for the exceptional
groups: if $G$ is simple, adjoint, and exceptional and $w$ is
elliptic, then $\o(w)=o(w)$, except for the conjugacy class
$A_3+\wt A_1$ in $F_4$.
We include the case $\threeD$ here.  We prove a bit more: we calculate $\o(w)$ 
in the non-adjoint simple exceptional groups, i.e. simply connected of type $E_6$, $E_7$ and $\threeD$.
We have already treated $G_2$ (Theorem \ref{t:Wsplit}). 

We use the explicit lists of elliptic conjugacy classes, and formulas
for $j(w)^d$, from \cite[Section 3]{GM} (untwisted) and \cite[Section
6]{GKP} (twisted) and apply \eqref{e:j(w)} and \eqref{e:sigma(w)}. This
is a straightforward case-by-case analysis.

Recall (Lemma \ref{l:w_0(S)}) $j(w_S)^d$ contributes the term 
$$
(d\ch\rho(S))(-1)=z_L^{\frac d2},
$$
where $L=L(S)$. This  is trivial if and only if $\frac d2\ch\rho(S)\in X_*(T)$.
In particular 
if $\ch\rho\in\ch R$  we can ignore any term
$j(w_I)^d$ ($d$ even). 
This holds for any adjoint group ($F_4,E_6^{ad}, E_7^{ad}, E_8$) 
and also in $E_6^{sc}$ and (for any isogeny) $\threeD$. 

The same holds for any terms $j(w_S)^d$ provided $4|d$.
Here is the example of $F_4$.
We use notation of \cite[3.5]{GM}. The simple roots are $I=\{1,2,3,4\}$ ($3,4$ are short). 
There are $9$ elliptic conjugacy classes.

\[
\begin{tabular}{|c|c|c|c|}
\hline
Elliptic class &order & Good representative & $j(w)^{o(w)}$  
\\ \hline
$4 A_1$ & 2& $w_I$ & $j(w_I)^2$ 
\\ \hline
$D_4$ & 8& $2323432134$ & $j(w_I)^2 j(w_{3 4})^4$ 
\\ \hline
$D_4(a_1)$ & 4& $324321324321$ & $j(w_I)^2$
\\ \hline
$C_3+A_1$ &  8&$1214321323$ & $j(w_I)^2 j(w_{12})^4$
\\ \hline
$A_2+\tilde A_2$ & 3&$3214321323432132$ & $j(w_I)^2$
\\ \hline
$F_4(a_1)$ & 6& $32432132$ & $j(w_I)^2$
\\ \hline
$F_4$ & 12& $4321$ & $j(w_I)^2$
\\ \hline
$A_3+\tilde A_1$ &  4&$23234321324321$ & $j(w_I)^2 j(w_{23})^2$
\\ \hline
$B_4$ &  8&$243213$ & $j(w_I)^2$
\\ \hline
\end{tabular}
\]

By the preceding discussion  all terms are trivial except possibly
in the case $A_3+\wt A_1$, the term $\ch\rho(\{2,3\})(-1)$ coming from 
$j(w_{23})^2$.
It is easy to see
$\ch\rho(\{2,3\})=\frac 32\ch\alpha_2+2\ch\alpha_3$,
so  $o(w)=4$ and  any lift of $w$ has order $8$.
Alternatively the derived group of  $L(\{2,3\})$ is isomorphic to $Sp(4)$, 
and $\sigma(w_{23})^2=z_{Sp(4)}$, which is nontrivial. 

The preceding discussion show that in  all cases in types $E_6$ (untwisted) and  $\threeD$, $\o(w,G)=o(w)$. 
Here is a list of the remaining elliptic conjugacy classes, for which it is not obvious whether
$\o(w)=o(w)$ or $2o(w)$. 

\[
\begin{tabular}{|c|c|c|c|c|}
\hline
$G$&Elliptic class &order & $j(w)^{o(w)}$  \\\hline
$\twoE$ & $4254234565423456$ & $6$ & $j(w_I)^2j(w_{2345})^2$\\
\hline
\hline
$E_7$ & $E_7$ & $18$ & $j(w_I)^2$\\
\hline
$E_7$ & $E_7(a_1)$ & $14$ & $j(w_I)^2$\\
\hline
$E_7$ & $E_7(a_2)$ & $12$ & $j(w_I)^6j(w_{257})^2$\\
\hline
$E_7$ & $E_7(a_3)$ & $30$ & $j(w_I)^6j(w_{24})^4$\\
\hline
$E_7$ & $D_6+A_1$ & $10$ & $j(w_I)^2j(w_{24})^8$\\
\hline
$E_7$ & $A_7$ & $8$ & $j(w_I)^2j(w_{257})^2j(w_2)^4$\\
\hline
$E_7$ & $E_7(a_4)$ & $6$ & $j(w_I)^2$\\
\hline
$E_7$ & $D_6(a_2)+A_1$ & $6$ & $j(w_I)^2j(w_{13})^4$\\
\hline
$E_7$ & $A_5+A_2$ & $6$ & $j(w_I)^2j(w_{2345})^2$\\
\hline
$E_7$ & $D_4+3A_1$ & $6$ & $j(w_I)^2j(w_{24567})^4$\\

\hline
$E_7$ & $2A_3+A_1$ & $4$ & $j(w_I)^2j(w_{257})^2$\\
\hline
$E_7$ & $7A_1$ &  $2$ & $j(w_I)^2$ \\
\hline\hline 
$E_8$ & $E_8(a_7)$ & $12$ & $j(w_I)^2j(w_{2345})^2$\\
\hline
$E_8$ & $E_7(a_2)+A_1$ & $12$ & $j(w_I)^2j(w_{2345})^2j(w_{24})^8$\\
\hline

$E_8$ & $E_6(a_2)+A_2$ & $12$ & $j(w_I)^2j(w_{2345})^6$\\
\hline

$E_8$ & $A_7+A_1$ & $8$ & $j(w_I)^2j(w_{2345})^2j(w_{25})^4$\\
\hline

$E_8$ & $E_6(a_2)+A_2$ & $6$ & $j(w_I)^2j(w_{2345})^2$\\
\hline

$E_8$ & $A_5+A_2+A_1$ & $6$ & $j(w_I)^2j(w_{234578})^2j(w_{78})^2$\\
\hline

$E_8$ & $D_5(a_1)+A_3$ & $12$ & $j(w_I)^4j(w_{123456})^2$\\
\hline

$E_8$ & $2A_3+2A_1$ & $4$ & $j(w_I)^2j(w_{2345})^2$\\
\hline
\end{tabular}
\]
\smallskip

Consider types $\twoE$ and $E_8$. In both cases $\ch\rho\in\ch R\subset X_*$, so we can ignore all terms $j(w_I)^d$. 
The remaining terms are: $\twoE$: 
$S=\{2,3,4,5\}$, and 
$E_8$: $S=\{2,3,4,5\}$, $\{2,3,4,5,7,8\}$,$\{7,8\}$ or $\{1,2,3,4,5,6\}$.
The Levi factors $L$ are of type $D_4, D_4, D_4\times A_2, A_2$ or $E_6$, respectively.
In each case $z_L=1$, so $\o(w,G)=o(w)$ in these cases.

Consider type $E_7$.
After reducing each $d$ modulo $4$ we have to  determine if 
$\ch\rho\in X_*$ or $\ch\rho+\ch\rho(S)\in X_*$ where $S=\{2,5,7\}$ or $\{2,3,4,5\}$.
Using notation of \cite{bourbaki} we have:
$$
\begin{aligned}
\ch\rho&=
17\ch\alpha_1+
\frac{49}2\ch\alpha_2+
33\ch\alpha_3+
48\ch\alpha_4+
\frac{75}2\ch\alpha_5+
26\ch\alpha_6+
\frac{27}2\ch\alpha_7\\
\ch\rho(\{2,5,7\})&=\frac12\ch\alpha_2+\frac12\ch\alpha_5+\frac12\ch\alpha_7\\
\ch\rho(\{2,3,4,5\})&=3\ch\alpha_2+3\ch\alpha_3+4\ch\alpha_4+3\ch\alpha_5
\end{aligned}
$$
Since $\ch\rho(\{2,3,4,5\})\in\ch R$ we can ignore these terms. 
Also $\ch\rho$ is in the coweight lattice, and $\ch\rho\equiv\ch\rho(\{2,5,7\})\pmod{\ch R}$.
Therefore in 
 type $E_7^{ad}$, $\ch\rho$ and $\ch\rho+\ch\rho(\{2.5,7\})$ are contained in $X_*$ (the coweight lattice), so 
$\oad(w)=\o(w,G)=o(w)$ in all cases. 

Finally in type  $E_7^{sc}$ we have $X_*=\ch R$, and we see $\ch\rho\not\in\ch R$, $\ch\rho+\ch\rho(\{2,5,7\})\in\ch R$. 
We conclude that 
$\o(w,G)=2o(w)$ \emph{except} for the classes
$E_7(a_2)$, $A_7$ and $2 A_3+A_1$. 
Here is the conclusion.

\begin{proposition}
\label{p:remaining}
Suppose $G$ is simple, exceptional, and $w\in W\delta$ is an elliptic element. 

\begin{itemize}
\item If $G$ is type $G_2$ then $W$ lifts, so $\o(w)=o(w)$.
\item In types  $\threeD, E_6, \twoE$ and $E_8$ every term $z_L$ occurring is trivial, so $\o(w)=o(w)$.
\item In type $F_4$ every term $z_L$ occurring is trivial except for the conjugacy class $A_3+\wt A_1$, 
and 
$$
\o(w)=
\begin{cases}
  2o(w)&\text{conjugacy class }A_3+\wt A_1\\
  o(w)&\text{otherwise}
\end{cases}
$$
\item In type $E_7$
$$
\sigma(w)^{o(w)}=
\begin{cases}
  1& \text{conjugacy classes } E_7(a_2), A_7 \text{ and } 2A_3+A_1\\
  z_G&\text{otherwise}
\end{cases}
$$
In particular if $G$ is adjoint then 
$\o(w)=o(w)$. 
If $G$ is simply connected then
 (since $z_G\ne 1$ in $G$) ,
$\o(w)=o(w)$ only in the three conjugacy classes $E_7(a_2), A_7$ and $2A_3+A_1$, and $\o(w)=2o(w)$ otherwise.
\end{itemize}
\end{proposition}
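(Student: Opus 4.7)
The plan is to carry out a case-by-case analysis driven by Proposition \ref{p:good}, equation \eqref{e:sigma(w)}, and the explicit lists of good representatives from \cite{GM} and \cite{GKP}. For each elliptic conjugacy class in $W\delta$ we pick a good representative $w$, read off the decomposition
$$
j(w)^{o(w)} = j(w_0(S_1))^{d_1}\cdots j(w_0(S_l))^{d_l},
$$
and then use Lemma \ref{l:w_0(S)} and \eqref{e:sigma(w)} to write
$$
\sigma(w)^{o(w)} = \bigl(\textstyle\sum_i d_i\,\ch\rho(S_i)\bigr)(-1)\in T.
$$
The question then becomes: does this element of $T$ lie in the kernel of the exponential map, i.e.\ is the exponent vector in $X_*(T)$?

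The essential simplification is that $2\ch\rho(S)\in \ch R\subset X_*(T)$ for every $S\subset\Pi$ and every isogeny class. Consequently each factor with $4\mid d_i$ contributes trivially, and it suffices to reduce each $d_i$ modulo $4$. Moreover, whenever $\ch\rho\in X_*(T)$ (which holds in $\threeD$, $\twoE$, $E_6^{sc}$, and all adjoint groups in question), the universal $j(w_I)^2$ factor is harmless, so only the auxiliary terms with $d_i\equiv 2\pmod 4$ require inspection.

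Running through the tables: the $G_2$ case is already covered by Theorem \ref{t:Wsplit}. In $\threeD$, $E_6$, $\twoE$, and $E_8$ the surviving auxiliary terms involve Levi subgroups of type $D_4$, $A_2$, $E_6$, or products thereof, and in each such case a direct check in Bourbaki coordinates shows $\ch\rho(S_i)\in\ch R$, hence $z_{L(S_i)}=1$. In $F_4$ the same reasoning trivializes every class except $A_3+\wt A_1$, where the leftover term is $\ch\rho(\{2,3\}) = \tfrac{3}{2}\ch\alpha_2+2\ch\alpha_3\notin \ch R$; equivalently, the derived group of $L(\{2,3\})$ is $Sp(4)$ and $z_{Sp(4)}\ne 1$. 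In $E_7$ a parallel bookkeeping reduces the exponent modulo $\ch R$ to one of two residues, $\ch\rho$ or $\ch\rho+\ch\rho(\{2,5,7\})$; since these are congruent modulo $\ch R$ and both lie in the coweight lattice, in $E_7^{ad}$ every elliptic class gives $\sigma(w)^{o(w)}=1$, while in $E_7^{sc}$ (where $X_*=\ch R$) only the residue $\ch\rho+\ch\rho(\{2,5,7\})$ is trivial. A final inspection of the table isolates the three classes $E_7(a_2)$, $A_7$, $2A_3+A_1$ as exactly those whose decomposition produces that residue.

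The substantive mathematical content is already provided by Proposition \ref{p:good}, Lemma \ref{l:w_0(S)}, and the classification of elliptic classes with good representatives; the main obstacle is purely organizational. The delicate step is the $E_7^{sc}$ verification, where one must track the precise exponent of $\ch\alpha_i$ appearing in each $\ch\rho(S_i)$ and decide membership in $\ch R$ versus the coweight lattice. This is what the computation in Bourbaki coordinates displayed in the excerpt accomplishes, and once it is carried out the proposition is assembled directly from the tables above.
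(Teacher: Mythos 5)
Your proposal follows essentially the same route as the paper: good representatives from \cite{GM} and \cite{GKP}, the reduction of each exponent $d_i$ modulo $4$ via $z_{L(S_i)}^2=1$, triviality of the $j(w_I)^d$ terms whenever $\ch\rho\in X_*(T)$, and the two residual checks ($\ch\rho(\{2,3\})=\tfrac32\ch\alpha_2+2\ch\alpha_3$ in $F_4$, and $\ch\rho$ versus $\ch\rho+\ch\rho(\{2,5,7\})$ modulo $\ch R$ in $E_7$) that isolate exactly the exceptional classes named in the statement. The argument is correct and matches the paper's proof in both structure and the specific computations.
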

This completes the proof of Theorem \ref{t:A}(2). 

\sec{Proof of Theorem \ref{t:B}}
\label{s:more}

For the classical groups we use the description of the (twisted) elliptic conjugacy classes (Section \ref{s:ellclassical}).

\medskip
\noindent {\bf Type $\mathbf{A_{n-1}}$}.

The only elliptic conjugacy class is that of the Coxeter elements, and
by Proposition \ref{p:cox},  $\sigma(\Cox)^{o(\Cox)}=z_G$.

Now consider the twisted case, so $\delta$ is the non-trivial distinguished involution. 
Suppose $(a_1,\dots, a_l)$ is a partition of $n$ with all odd parts. 
Let $w=(\Cox_{a_1}\times\dots\times \Cox_{a_l})\epsilon\in W\delta$
(see  \eqref{e:EP}).
Set $d=\lcm(a_1,\dots, a_l)$.
Since $d$ is odd it is easy to see that $w^d=\epsilon$, and $o(w)=2d$. 
Choose a representative $g\in G$ of $\Cox_{a_1}\times\dots\times \Cox_{a_l}$.
Without loss of generality we may assume $\gti=g$. 
By Proposition \ref{p:cox} (applied to each factor)  $g^{2d}=I$.
Then
$$
\begin{aligned}
(g\epsilon)^{2d}&=(g\epsilon g\epsilon^{-1}\epsilon^2)^d\\
&=(g(\gti)(-I)^{n+1})^d\\
&=g^{2d}(-I)^{d(n+1)}\\
&=(-I)^{n+1}\\
&=z_G.
\end{aligned}
$$
Note that this is independent of $w$, and 
$\sigma(w)^{o(w)}=1$ if and only if $\ch\rho\in X_*$.

\medskip
\noindent {\bf Type $\mathbf{C_n}$}.
Suppose $\P=(a_1,\dots, a_l)$ is a partition of $n$
and $w$ is in the corresponding elliptic conjugacy class
$\E(\P)$  (cf. Section \ref{s:ellclassical}).
Set $e=\lcm(a_1,\dots, a_l)$. 
Since $o(\Cox_n)=2n$, and $Sp(2a_1)\times \dots\times Sp(2a_l)$ embeds in $Sp(2n)$, it is easy to see that 
$$
o(w)=2e.
$$
Recall (Proposition \ref{p:cox}) $\sigma(\Cox_n)^{2n}=(2\ch\rho)(-1)$. It follows easily that if we set
$$
\ch\tau=
(\frac{e}{a_1}\ch\rho(C_{a_1})\times \dots\times \frac{e}{a_l}\ch\rho(C_{a_l}))
$$
then it follows that 
$$
\sigma(w)^{o(w)}=(2\ch\tau)(-1).
$$
and this is trivial if and only if 
$\ch\tau\in X_*(T)$.
At least one term $e/a_i$ is odd. It follows that if $G$ is simply connected then $\ch\tau\not\in X_*(T)$,
and if $G$ is adjoint this holds 
if and only if all $e/a_i$ are odd, or equivalently
if and only if each $a_i$ has the same power of $2$ in its prime decomposition.

\medskip

\noindent {\bf Type $\mathbf{B_n/D_n}$}. 
Suppose $\P=(a_1,\dots, a_l)$ is a partition of $n$ 
with $a_1\ge a_2\ge \cdots\ge a_l\ge 1$, and $w$ is an element of the corresponding 
elliptic conjugacy class $\E(\P)$ (cf. Section \ref{s:ellclassical}). 
Then  $o(w)=2\lcm(a_1,\dots, a_l)$.
For $1\le i\le l$ set 
$$
\begin{aligned}
\Sigma(\P,i)&=\sum_{k=0}^{i-1}a_k\\
[a,b]&=\{a,a+1,\dots, b\}\\
S_i&=
\begin{cases}
[\Sigma(\P,i)+1,n]&\text{type $B_n$}\\
[\Sigma(\P,i)+1,n]&\text{type $D_n$}, \Sigma(\P,i)\le n-2\\
\emptyset&\text{type $D_n$}, \Sigma(\P,i)> n-2
\end{cases}\\
e_i&=2o(w)/a_i\in 2\Z.
\end{aligned}
$$
There exists an element $w$ in the corresponding
elliptic conjugacy class with
$$
j(w)^{o(w)}=j(w_0)^{e_1} j(w_0(S_2))^{e_2-e_1} \cdots
j(w_0(S_l))^{e_l-e_{l-1}}.
$$
Set $e_0=0$ and 
$$
\ch\tau=\sum_{i=1}^\ell \frac{e_i-e_{i-1}}2\ch\rho(S_i).
$$
Then $\sigma(w)^{o(w)}=1$ if and only if $\ch\tau\in X_*(T)$. 
This is automatic if $G$ is adjoint or $\SO(2n)$.
\begin{example}
\label{ex:D}
Consider the partition $(2,1,\dots,1)$ of $n\ge 3$.
Then $o(w)=4$, $e_1=2$, $e_2=e_3=\dots e_{n-1}=4$,  $S_1=\Pi$, $S_2=\{3,4,\dots, n\}$.
Since $e_i-e_{i-1}=0$ for $i\ge 2$ we get 
$\ch\tau=\ch\rho+\ch\rho(\{3,\dots, n\})$, i.e. in the standard coordinates
$$
\ch\tau=(n-1,n-2,2(n-3),2(n-4),\dots, 2,0).
$$  
The sum of the coordinates of $\ch\tau$ is odd. Therefore $\ch\tau$ is in $X_*(T)$ if $G$ is adjoint 
or $G=\SO(2n)$, but not if $G=\Spin(2n)$. By Lemma \ref{l:easynonlift} this implies $W$ does not lift to $\Spin(2n)$. 
\end{example}

\bibliographystyle{plain}
\begin{bibdiv}
\begin{biblist}

\bib{contragredient}{article}{
      author={Adams, J.},
      author={Vogan, D.~A., Jr.},
       title={The {C}ontragredient},
        date={2012},
        note={{\tt arXiv:1201.0496}},
}

\bib{atlaswebsite}{article}{
      author={Adams, Jeffrey},
       title={Atlas of lie groups and representations (website)},
        date={2014},
        note={www.liegroups.org},
}

\bib{bourbaki}{book}{
      author={Bourbaki, Nicolas},
       title={Lie groups and {L}ie algebras. {C}hapters 7--9},
      series={Elements of Mathematics (Berlin)},
   publisher={Springer-Verlag},
     address={Berlin},
        date={2005},
        ISBN={3-540-43405-4},
        note={Translated from the 1975 and 1982 French originals by Andrew
  Pressley},
      review={\MR{2109105 (2005h:17001)}},
}

\bib{broue_michel}{incollection}{
      author={Brou{\'e}, Michel},
      author={Michel, Jean},
       title={Sur certains \'el\'ements r\'eguliers des groupes de {W}eyl et
  les vari\'et\'es de {D}eligne-{L}usztig associ\'ees},
        date={1997},
   booktitle={Finite reductive groups ({L}uminy, 1994)},
      series={Progr. Math.},
      volume={141},
   publisher={Birkh\"auser Boston, Boston, MA},
       pages={73\ndash 139},
      review={\MR{1429870 (98h:20077)}},
}

\bib{Weyl_splitting}{incollection}{
      author={Curtis, Morton},
      author={Wiederhold, Alan},
      author={Williams, Bruce},
       title={Normalizers of maximal tori},
        date={1974},
   booktitle={Localization in group theory and homotopy theory, and related
  topics ({S}ympos., {B}attelle {S}eattle {R}es. {C}enter, {S}eattle, {W}ash.,
  1974)},
   publisher={Springer, Berlin},
       pages={31\ndash 47. Lecture Notes in Math., Vol. 418},
      review={\MR{0376956 (51 \#13131)}},
}

\bib{GKP}{article}{
      author={Geck, Meinolf},
      author={Kim, Sungsoon},
      author={Pfeiffer, G{\"o}tz},
       title={Minimal length elements in twisted conjugacy classes of finite
  {C}oxeter groups},
        date={2000},
        ISSN={0021-8693},
     journal={J. Algebra},
      volume={229},
      number={2},
       pages={570\ndash 600},
         url={http://dx.doi.org/10.1006/jabr.1999.8253},
      review={\MR{1769289 (2001h:20049)}},
}

\bib{GM}{article}{
      author={Geck, Meinolf},
      author={Michel, Jean},
       title={``{G}ood'' elements of finite {C}oxeter groups and
  representations of {I}wahori-{H}ecke algebras},
        date={1997},
        ISSN={0024-6115},
     journal={Proc. London Math. Soc. (3)},
      volume={74},
      number={2},
       pages={275\ndash 305},
         url={http://dx.doi.org/10.1112/S0024611597000105},
      review={\MR{1425324 (97i:20050)}},
}

\bib{geck_pfeiffer}{book}{
      author={Geck, Meinolf},
      author={Pfeiffer, G{\"o}tz},
       title={Characters of finite {C}oxeter groups and {I}wahori-{H}ecke
  algebras},
      series={London Mathematical Society Monographs. New Series},
   publisher={The Clarendon Press, Oxford University Press, New York},
        date={2000},
      volume={21},
        ISBN={0-19-850250-8},
      review={\MR{1778802}},
}

\bib{He}{article}{
      author={He, Xuhua},
       title={Minimal length elements in some double cosets of {C}oxeter
  groups},
        date={2007},
        ISSN={0001-8708},
     journal={Adv. Math.},
      volume={215},
      number={2},
       pages={469\ndash 503},
         url={http://dx.doi.org/10.1016/j.aim.2007.04.005},
      review={\MR{2355597 (2009g:20088)}},
}

\bib{he_nie}{article}{
      author={He, Xuhua},
      author={Nie, Sian},
       title={Minimal length elements of finite {C}oxeter groups},
        date={2012},
        ISSN={0012-7094},
     journal={Duke Math. J.},
      volume={161},
      number={15},
       pages={2945\ndash 2967},
         url={http://dx.doi.org/10.1215/00127094-1902382},
      review={\MR{2999317}},
}

\bib{ihara_yokonuma}{article}{
      author={Ihara, Shin-ichiro},
      author={Yokonuma, Takeo},
       title={On the second cohomology groups ({S}chur-multipliers) of finite
  reflection groups},
        date={1965},
        ISSN={0040-8980},
     journal={J. Fac. Sci. Univ. Tokyo Sect. I},
      volume={11},
       pages={155\ndash 171 (1965)},
      review={\MR{0190232}},
}

\bib{ov}{book}{
      author={Onishchik, A.~L.},
      author={Vinberg, {\`E}.~B.},
       title={Lie groups and algebraic groups},
      series={Springer Series in Soviet Mathematics},
   publisher={Springer-Verlag},
     address={Berlin},
        date={1990},
        ISBN={3-540-50614-4},
        note={Translated from the Russian and with a preface by D. A. Leites},
      review={\MR{91g:22001}},
}

\bib{rgly}{article}{
      author={Reeder, Mark},
      author={Levy, Paul},
      author={Yu, Jiu-Kang},
      author={Gross, Benedict~H.},
       title={Gradings of positive rank on simple {L}ie algebras},
        date={2012},
        ISSN={1083-4362},
     journal={Transform. Groups},
      volume={17},
      number={4},
       pages={1123\ndash 1190},
         url={http://dx.doi.org/10.1007/s00031-012-9196-3},
      review={\MR{3000483}},
}

\bib{rostami}{article}{
      author={Rostami, S},
       title={On the canonical representatives of a finite weyl group},
        date={2016},
        note={{\tt arXiv:arXiv:1505.07442}},
}

\bib{springer_regular}{article}{
      author={Springer, T.~A.},
       title={Regular elements of finite reflection groups},
        date={1974},
        ISSN={0020-9910},
     journal={Invent. Math.},
      volume={25},
       pages={159\ndash 198},
      review={\MR{0354894}},
}

\bib{springer}{book}{
      author={Springer, T.~A.},
       title={Linear algebraic groups},
     edition={second},
      series={Modern Birkh\"auser Classics},
   publisher={Birkh\"auser Boston, Inc., Boston, MA},
        date={2009},
        ISBN={978-0-8176-4839-8},
      review={\MR{2458469}},
}

\bib{tits_group}{article}{
      author={Tits, J.},
       title={Normalisateurs de tores. {I}. {G}roupes de {C}oxeter \'etendus},
        date={1966},
        ISSN={0021-8693},
     journal={J. Algebra},
      volume={4},
       pages={96\ndash 116},
      review={\MR{MR0206117 (34 \#5942)}},
}

\end{biblist}
\end{bibdiv}

\end{document}